\newcommand\TitsNpp{\me{\mathscr N''}}
\author{Loren Spice}
\address{Texas Christian University \\ Fort Worth, TX 76109}
\email{l.spice@tcu.edu}
\thanks{The author was partially supported by National Science Foundation Postdoctoral Fellowship award DMS-0503107, and by Simons Foundation Collaboration Grant 246066.}
\title{On counting orbits in root systems}
\subjclass[2000]{Primary 17B22, 20F55, 05E18}
\keywords{Root system, Weyl group}
\date\today
\begin{document}
\begin{abstract}
The computation of the characters of supercuspidal representations of a \(p\)-adic group \cite{adler-spice:explicit-chars} involves some \(4\)th roots of unity whose values are defined in terms of orbits of the Galois group of a \(p\)-field on a root system.  The part of the definition that is of interest in the verification of stability of character sums \cite{debacker-spice:stability} involves just the parity of the number of Galois orbits.  In this paper, we re-cast the definition (nearly) in terms only of the abstract action of a pair of automorphisms on a root system, and compute it by a series of reductions in all cases.
\end{abstract}

\maketitle
\setcounter{tocdepth}2
\tableofcontents
\listoftables

\section{Introduction}
\label{sec:intro}

In the computation of characters of supercuspidal representations of \(p\)-adic groups \xcite{adler-spice:explicit-chars}*{Theorem \xref{thm:full-char}}, there arise, in addition to some now reasonably well understood positive constants \xcite{debacker-spice:stability}*{Lemma \xref{lem:char-summand}}, certain somewhat mysterious \(4\)th roots of unity.  Loosely speaking, these roots of unity have a `ramified part', of order dividing \(4\), which is not expected to change across a supercuspidal L-packet, and an `unramified part', of order dividing \(2\), which is expected to give stability when summing the characters in a supercuspidal L-packet.  See \xcite{debacker-spice:stability}*{Definition \xref{defn:st-sign}} for definitions, and \xcite{debacker-spice:stability}*{Theorem \xref{thm:stable}} for a verification of these expectations for the positive-depth, `unramified' supercuspidal L-packets constructed by Reeder \cite{reeder:sc-pos-depth}*{\S6.6}.  Our goal, here and in \cite{spice:signs-alg}, is to understand the unramified part in order to make progress towards a construction of `\emph{ramified}' supercuspidal L-packets.

The definition of the unramified part is stated in a way that seems to involve the power of the Bruhat--Tits machinery \cites{bruhat-tits:reductive-groups-1,bruhat-tits:reductive-groups-2} in an essential way; but, in fact, one may re-cast the definition so that it involves only the theory of a finite group of Lie type (thought of as reductive quotients of \(p\)-adic groups \cite{tits:corvallis}*{\S3.5}) equipped with an algebraic automorphism \(\theta\) (reflecting the action of the inertia subgroup of the Galois group).  A similar point of view is evident in \cite{reeder-levy-yu-gross:gradings}*{\S1, p.~1126} and \cite{reeder-yu:epipelagic}*{\S4.1}; I thank Tasho Kaletha for suggesting that it might be useful here.  Further, it turns out that the definition \emph{almost} does not even involve the ambient group at all, only the problem of counting the orbits for a certain action on its root system.  The caveat `almost' comes from the fact that we are not dealing with the full root system, but rather with a subset \(\Root_\theta\) associated to the automorphism \(\theta\).

Even this caveat does not arise if we work with automorphisms \(\theta\) that act by an \emph{elliptic} automorphism \(w\) of the root system.  In this case, though we must still use a choice of \(\theta\) to define \(\Root_\theta\), the actual choice does not matter, and so we obtain a set \(\Root_w\) depending only on \(w\).  Once this definition has been made, the remainder of the sign computation takes place entirely in the abstract setting of a group action on the root system.

Although the original statement of the problem involves counting orbits on \(\Root_\theta\), it turns out to be more convenient to consider the sign of a certain permutation of a quotient of \(\Root_\theta\).  This is closely related to the orbit-counting problem (see Lemma \ref{lem:perm-sign}), but more amenable to calculation because it is a `multiplicative question', in an appropriate sense.

In this paper, we carry out this abstract computation as follows.  There is a case-by-case computation at the heart of our work (see \S\S\ref{sec:F4} and \ref{sec:En}), but, before getting there, we need to reduce to a manageable number of cases.  We begin by discussing generalities on permutations in \S\ref{sec:perm}, particularly the permutation arising by multiplication by a unit on \(\Z/n\Z\), which is used in the classical-group computations of \S\ref{sec:classical}.  See Proposition \ref{prop:Legendre}.  Next we recall some generalities on root systems and their automorphisms in \S\S\ref{sec:root-system}--\S\ref{sec:parabolic}; particularly, we discuss `Levi descent' for automorphisms of root systems.  See Corollary \ref{cor:Borel}.

In \S\ref{sec:R_theta}, we define the set \(\Root_\theta\) (Definition \ref{defn:R_theta}) and discuss some of its properties; see particularly Definition \ref{defn:Rw} and Lemma \ref{lem:odd-power}.

In \S\ref{sec:root-sign}, we define the sign \(\Legend\sigma\theta\) (Definition \ref{defn:root-sign}), and show that its computation may be reduced to the case where \(\theta\) acts by an elliptic (Proposition \ref{prop:reduce-elliptic}) automorphism of an irreducible (Remark \ref{rem:reduce-irreducible}) root system of \(2\)-power order (Lemma \ref{lem:reduce-2-power}).  As a preliminary example, we compute it in case \(\theta\) acts by \(-1\) on the root system; see Proposition \ref{prop:minus-sign}.  The computation for all the classical root systems can be handled almost uniformly; we do so in \S\ref{sec:classical}.  See in particular \eqref{eq:BCD:Coxeter}--\eqref{eq:BCD:norm}.

Finally we have reduced to a manageable number of cases, namely, the elliptic conjugacy classes of automorphisms of exceptional root systems of \(2\)-power order.  See Remark \ref{rem:elliptic-list}.  For \Gn, the only such automorphism is \(-1\), which is handled (in a uniform way) in Proposition \ref{prop:minus-sign}.  We handle the possibilities for \Fn in \S\ref{sec:F4}, and for \En in \S\ref{sec:En}.  The results are summarised in Tables \ref{tab:An}--\ref{tab:E6}.

It is a pleasure to thank Jeff Adler, Stephen DeBacker, Kyle Petersen, and John Stembridge for many useful conversations.

\section{Preliminaries}

\numberwithin{equation}{subsection}

\subsection{Permutations}
\label{sec:perm}

The notation \(\sgn\), with a single subscript, will occur in two different ways in this paper (see Definition \ref{defn:sgn-Root-A}); we rely on context to distinguish them.

\begin{defn}
\label{defn:sgn-perm}
If \(X\) is any finite set, then we write \mnotn{\Sgp_X} for the symmetric group on \(X\), and \mnotn{\sgn_X} for the unique homomorphism \anonmap{\Sgp_X}{\sgen{-1}} that takes the value \(-1\) at any transposition.  We usually abbreviate \(\Sgp_{\Z/n\Z}\) to \mnotn{\Sgp_n}, and \(\sgn_{\Z/n\Z}\) to \mnotn{\sgn_n}.
\end{defn}

The following lemma is easy, but crucial to the main computational tool of this paper, which is to replace an orbit-counting problem by the problem of computing the sign of a permutation.

\begin{lem}
\label{lem:perm-sign}
For all finite sets \(X\) and all \(w \in \Sgp_X\), we have that
\[
\sgn_X(w) = (-1)^{\card X}(-1)^{\card{\sgen w\bslash X}}.
\]
\end{lem}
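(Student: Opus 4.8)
The cleanest approach is to reduce to the case where $w$ is a single cycle, using multiplicativity of both sides. Write $w$ as a product of its disjoint cycles, including the fixed points as $1$-cycles, say $w = c_1 c_2 \cdots c_r$ where $c_i$ acts on an orbit $O_i$ of size $n_i$, so that $\card X = \sum_i n_i$ and $\card{\sgen w \bslash X} = r$. Since $\sgn_X$ is a homomorphism, $\sgn_X(w) = \prod_i \sgn_{O_i}(c_i)$, where I view each $c_i$ as a permutation of the full set $X$ fixing everything outside $O_i$ (its sign equals the sign of its restriction to $O_i$). So it suffices to prove the identity when $X$ is a single $\sgen w$-orbit of size $n$, i.e.\ $\sgn_X(w) = (-1)^n (-1)^1 = (-1)^{n+1} = (-1)^{n-1}$, which is precisely the well-known sign of an $n$-cycle. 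Then multiplying over $i$ gives $\sgn_X(w) = \prod_i (-1)^{n_i - 1} = (-1)^{\sum_i n_i} \cdot (-1)^{-r} = (-1)^{\card X}(-1)^{\card{\sgen w\bslash X}}$, as desired.

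First I would record that $\sgn_X$ restricted to the subgroup of permutations supported on $O_i$ agrees with $\sgn_{O_i}$ under the obvious identification — this is immediate because a transposition supported on $O_i$ is a transposition in $\Sgp_X$, and both are the unique such sign characters. Next I would dispose of the single-cycle case: an $n$-cycle is a product of $n-1$ transpositions, e.g.\ $(a_1\,a_2\,\cdots\,a_n) = (a_1\,a_2)(a_2\,a_3)\cdots(a_{n-1}\,a_n)$, so its sign is $(-1)^{n-1} = (-1)^{n+1}$, matching $(-1)^{\card{O_i}}(-1)^{\card{\sgen{c_i}\bslash O_i}} = (-1)^{n}(-1)^{1}$. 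Finally I would assemble the pieces by the multiplicativity computation above, being slightly careful that $(-1)^{-r} = (-1)^r$.

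Honestly there is no real obstacle here; the only thing to handle with any care is the bookkeeping of fixed points — one must remember to count $1$-cycles among the orbits so that $\card{\sgen w\bslash X}$ really is the total number of cycles, and to note that a $1$-cycle contributes $(-1)^{1-1} = 1$ to the product and $(-1)^1(-1)^1 = 1$ to the right-hand side, consistently. An alternative, equally short proof is by induction on $\card X$: if $w$ has a fixed point $x$, pass to $X \setminus \{x\}$, reducing both $\card X$ and the orbit count by one; otherwise pick $x$ and let $y = w(x) \neq x$, and compare $w$ with $(x\,y)w$, which merges or splits behaviour so that $(x\,y)w$ fixes $x$ — then $\sgn_X(w) = -\sgn_X((x\,y)w)$ while $\card{\sgen{(x\,y)w}\bslash(X\setminus\{x\})} = \card{\sgen w\bslash X}$ and $\card{X\setminus\{x\}} = \card X - 1$, and the sign flip is exactly absorbed by the change in $(-1)^{\card X}$. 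I would go with the disjoint-cycle argument as the primary one since it is the most transparent.
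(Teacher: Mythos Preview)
Your argument is correct and is essentially the same as the paper's proof: the paper simply observes in one line that both sides equal $\prod_\omega (-1)^{\lvert\omega\rvert - 1}$, the product taken over the $\langle w\rangle$-orbits $\omega$ on $X$, which is exactly your disjoint-cycle computation spelled out in more detail.
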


\begin{proof}
Both sides equal \(\prod (-1)^{\card\omega - 1}\), where the product runs over the orbits \(\omega\) of \(w\) on \(X\).
\end{proof}

In our computation of signs associated to Galois actions on classical root systems (see \S\ref{sec:classical}), we shall often have occasion to compute the sign of permutations arising by multiplication.

\begin{defn}
\label{defn:sgn+-}
If \(n\) and \(q\) are relatively prime integers and \(\varepsilon \in \sset{\pm1}\), then we denote the multiplication-by-\(q\) map on \(\sgen\varepsilon\bslash\Z/n\Z\) again by \(q\), and put \(\mnotn{\sgn^\varepsilon_n(q)} = \sgn_{\sgen\varepsilon\bslash\Z/n\Z}(q)\).
\end{defn}

The homomorphisms \(\sgn^\pm_n\) turn out to be closely related to the Jacobi symbol
\mnonotn{\displaystyle\Legend q n}%
\Legend\cdot n when \(n\) is odd.  We begin with a reduction result that computes \(\sgn^\pm_n\) in some special cases, then apply it in Proposition \ref{prop:Legendre} to compute \(\sgn^\pm_n\) in all cases.

\begin{prop}
\toplabel{prop:pre-Legendre}
Suppose that \(n\) and \(q\) are relatively prime integers.
\begin{enumerate}
\item\sublabel{p-power} If \(p\) is an odd prime, \(k \in \Z_{\ge 0}\), and \(n = p^k\), then
\[
\sgn^+_n(q) = \Legend q n
\qandq
\sgn^-_n(q) = \begin{cases}
\Legend q n, & p \equiv 1 \pmod4  \\
1,           & p \equiv 3 \pmod4.
\end{cases}
\]
\item\sublabel{2-power} If \(k \in \Z_{\ge 0}\) and \(n = 2^k\), then
\[
\sgn^+_n(q) = \begin{cases}
(-1)^{(q - 1)/2}, & k > 1,           \\
1,                & \text{otherwise}
\end{cases}
\qandq
\sgn^-_n(q) = \begin{cases}
\Legend n q, & k > 1,              \\
1,             & \text{otherwise.}
\end{cases}
\]
\item\sublabel{CRT} If \(m \in \Z\) is relatively prime to \(n\) and \(q\), then
\[
\sgn^+_{m n}(q) = \sgn^+_m(q)^n\sgn^-_n(q)^m
\]
and
\[
\sgn^-_{m n}(q) = \sgn^+_m(q)^{\rup{(n - 1)/2}}\sgn^-_m(q)^n\sgn^+_n(q)^{\rup{(m - 1)/2}}\sgn^-_n(q)^m.
\]
\end{enumerate}
\end{prop}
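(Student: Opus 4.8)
The plan is as follows. For the prime-power case and the $2$-power case I would first reduce to evaluating at generators of $(\Z/n\Z)^\times$. Since multiplication by $q$ on $\sgen\varepsilon\bslash\Z/n\Z$ defines a homomorphism from $(\Z/n\Z)^\times$ to the symmetric group of $\sgen\varepsilon\bslash\Z/n\Z$, composing with $\sgn$ shows that $q \mapsto \sgn^\varepsilon_n(q)$ is a homomorphism $(\Z/n\Z)^\times \to \sgen{-1}$; and each expression on the right-hand side --- a Jacobi symbol $\Legend q n$, the character $q \mapsto (-1)^{(q - 1)/2}$, or the symbol $q \mapsto \Legend n q$ (taking $q$ positive) --- is likewise a homomorphism on $(\Z/n\Z)^\times$. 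Hence it suffices to check each identity on a generating set: a primitive root in the prime-power case (and for $n = 4$), and the pair $-1, 5$ in the case $n = 2^k$ with $k \ge 3$; the cases $n \in \sset{1, 2}$ are trivial.

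To evaluate $\sgn^\varepsilon_n$ at such a generator $g$ I would count orbits and invoke Lemma \ref{lem:perm-sign}. The engine is a valuation stratification: writing $\Z/n\Z$ (resp.\ $\sgen{-1}\bslash\Z/n\Z$) as the disjoint union, over $0 \le j \le k$, of the sets of elements of $p$-adic valuation $j$, each stratum is stable under multiplication by $g$, and for $j < k$ it is equivariantly identified with $(\Z/p^{k - j}\Z)^\times$ (resp.\ with its quotient by $\sgen{-1}$). When $g$ is a primitive root, multiplication by $g$ is then multiplication by a generator of that cyclic group --- a single cycle --- so there are exactly $k + 1$ orbits and Lemma \ref{lem:perm-sign} gives $\sgn^\varepsilon_n(g) = (-1)^{\card{\sgen\varepsilon\bslash\Z/n\Z}}(-1)^{k + 1}$; comparing with $\Legend g{p^k} = (-1)^k$ (as $g$ is a nonresidue) and reducing $\card{\sgen{-1}\bslash\Z/p^k\Z} = (p^k + 1)/2$ modulo $2$ --- the source of the dichotomy between $p \equiv 1$ and $p \equiv 3 \pmod 4$ --- yields the prime-power case. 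The $2$-power case is the same in spirit: multiplication by $5$ has order $2^{m - 2}$ in $(\Z/2^m\Z)^\times$ for $m \ge 3$, giving $2 k$ orbits on $\Z/2^k\Z$ but, since $5$ generates $\sgen{-1}\bslash(\Z/2^m\Z)^\times$, only $k + 1$ orbits on $\sgen{-1}\bslash\Z/2^k\Z$; multiplication by $-1$ is the involution $x \mapsto -x$ of $\Z/2^k\Z$ (fixed points $0$ and $2^{k - 1}$) and is trivial on $\sgen{-1}\bslash\Z/2^k\Z$; and Lemma \ref{lem:perm-sign} then gives the stated signs.

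For the Chinese-remainder case I would use the ring isomorphism $\Z/m n\Z \cong \Z/m\Z \times \Z/n\Z$, under which multiplication by $q$ becomes the product permutation $q \times q$ and multiplication by $-1$ becomes the \emph{diagonal} involution $\delta$. The first identity is then immediate from the elementary fact that a product permutation $\sigma \times \tau$ of $A \times B$ has sign $\sgn_A(\sigma)^{\card B}\sgn_B(\tau)^{\card A}$ --- because $\sigma \times \mathrm{id}_B$ is $\card B$ disjoint copies of $\sigma$, and $\mathrm{id}_A \times \tau$ is $\card A$ copies of $\tau$. The second identity is the delicate one, since $\delta$ is \emph{diagonal}, so $\sgen\delta\bslash(\Z/m\Z \times \Z/n\Z)$ is not a product. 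I would split it, according to the first coordinate, into an honest product over the (one or two) fixed points of $x \mapsto -x$ on $\Z/m\Z$, contributing a power of $\sgn^-_n(q)$, together with a complementary piece that fibres over $\sgen{-1}\bslash(\Z/m\Z \setminus \{\text{fixed points}\})$ with fibre $\Z/n\Z$ but is only a \emph{skew} product: transport around the base may flip the fibre by multiplication by $-1$. The sign of a skew product can be computed one base cycle at a time --- around a base cycle of length $\ell$ the fibre holonomy is $\iota^\nu q^\ell$, where $\iota$ is multiplication by $-1$ on $\Z/n\Z$ and the parity of $\nu$ records whether the corresponding multiplication-by-$q$ orbit in $\Z/m\Z$ is split, and suspending this holonomy $\ell$ times contributes a factor built from $\sgn^+_n(q)$ and $\sgn^+_n(-1) = \sgn(\iota)$. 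Running the same analysis with $m$ and $n$ interchanged and collecting terms should produce the formula, the exponents $\rup{(m - 1)/2}$ and $\rup{(n - 1)/2}$ emerging from $\card{\sgen{-1}\bslash\Z/m\Z}$, $\card{\sgen{-1}\bslash\Z/n\Z}$ and the sizes of the fixed-point sets.

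The step I expect to be the main obstacle is exactly this skew-product computation in the Chinese-remainder case: keeping the holonomy signs straight, tracking the parity of the number of split versus non-split multiplication-by-$q$ orbits, and verifying that the bookkeeping collapses to the clean exponents in the statement. By contrast, the prime-power and $2$-power cases should be essentially mechanical once the homomorphism reduction and the valuation stratification are set up.
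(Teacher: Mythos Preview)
Your approach to parts (i) and (ii) is correct and close in spirit to the paper's, though organised differently. You evaluate both sides on explicit generators of $(\Z/n\Z)^\times$ and count orbits via the full valuation stratification; the paper instead peels off one layer at a time, writing $\sgn^\varepsilon_{p^k}(q) = \sgn^\varepsilon_{p^{k-1}}(q)\cdot \sgn_{\sgen\varepsilon\bslash(\Z/p^k\Z)^\times}(q)$ and then observing that $q\mapsto \sgn_{\sgen\varepsilon\bslash(\Z/p^k\Z)^\times}(q)$ is a quadratic character of a group with at most one index-$2$ subgroup, so it is forced to be the Legendre/Jacobi or Kronecker character without any explicit orbit count. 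Your route is more hands-on; the paper's is a bit slicker but both work.

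The real divergence is in part (iii), and here the paper has a simplification you are missing. You propose to handle general $q$ and wrestle with the diagonal $-1$ action as a ``skew product'' with holonomy, which you yourself flag as the hard step. The paper avoids this entirely: since both sides of each identity are (a) multiplicative in $q$ and (b) symmetric in $m$ and $n$, it suffices to prove them for $q$ lying in the $(\Z/m\Z)^\times$ factor of $(\Z/mn\Z)^\times$, i.e.\ for $q\equiv 1\pmod n$. With that reduction, multiplication by $q$ is trivial on the $\Z/n\Z$ coordinate, and the orbit structure of $q$ on $\sgen{-1}\bslash\Z/mn\Z$ is read off by a simple dichotomy: if the $\Z/n\Z$-coordinate of $a$ is $2$-torsion then the $q$-orbit of $\sgen{-1}\cdot a$ matches a $q$-orbit in $\sgen{-1}\bslash\Z/m\Z$, and otherwise it matches a $q$-orbit in $\Z/m\Z$. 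The cycle type is thus a disjoint union of $\lvert(\Z/n\Z)[2]\rvert$ copies of the cycle type on $\sgen{-1}\bslash\Z/m\Z$ and $\lvert\sgen{-1}\bslash\Z/n\Z\rvert - \lvert(\Z/n\Z)[2]\rvert$ copies of the cycle type on $\Z/m\Z$; the stated exponents drop out immediately from $\lvert\sgen{-1}\bslash\Z/n\Z\rvert = \lceil (n+1)/2\rceil$ and $\lvert(\Z/n\Z)[2]\rvert\equiv n\pmod 2$. No holonomy bookkeeping is needed. Your skew-product plan would eventually get there, but the reduction to $q\equiv 1\pmod n$ turns the delicate step into a two-line case analysis.
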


\begin{proof}
We begin by proving \subpref{p-power} and \subpref{2-power}.  Suppose that \(\varepsilon \in \sset{\pm1}\), \(k \in \Z_{> 0}\), and \(p\) is \emph{any} prime, and put \(n = p^k\) and \(n' = p^{k - 1}\).  Then \((\Z/n\Z)\mult\) and \(p\Z/n\Z\) form a partition of \(\Z/n\Z\) into \(\varepsilon\)- and \(q\)-stable subsets, and \map p{\Z/n'\Z}{p\Z/n\Z} is an \(\varepsilon\)- and \(q\)-equivariant bijection.  Thus,
\[
\sgn^\varepsilon_n(q) = \sgn^\varepsilon_{n'}(q)\sgn_{\sgen\varepsilon\bslash(\Z/n\Z)\mult}(q).
\]
By Lemma \ref{lem:perm-sign},
\[
\sgn_{\sgen\varepsilon\bslash(\Z/n\Z)\mult}(q) = (-1)^{\impindx{\sgen\varepsilon} - \impindx{\sgen{q, \varepsilon}}},
\]
where we have written \impindx G for the index in \((\Z/n\Z)\mult\) of a subgroup \(G\).

If
\begin{itemize}
\item \(p\) is odd and
	\begin{itemize}
	\item \(\varepsilon = 1\) or
	\item \(p \equiv 1 \pmod4\), or
	\end{itemize}
\item \(p = 2\) and
	\begin{itemize}
	\item \(\varepsilon = 1\) and \(k = 2\)
	or
	\item \(\varepsilon = -1\) and \(k > 2\),
	\end{itemize}
\end{itemize}
then the group \(\sgen\varepsilon\bslash(\Z/n\Z)\mult\) has a unique index-\(2\) subgroup, so \anonmapto q{(-1)^{\impindx{\sgen\varepsilon} - \impindx{\sgen{q, \varepsilon}}} = (-1)^{\impindx{\sgen{q, \varepsilon}}}} is its unique non-trivial quadratic character.  That is,
\begin{itemize}
\item if \(p\) is odd, and \(\varepsilon = 1\) or \(p \equiv 1 \pmod4\), then
\[
(-1)^{\impindx{\sgen\varepsilon} - \impindx{\sgen{q, \varepsilon}}} = \Legend q p = \Legend q n\Legend q{n'};
\]
\item if \(p = 2\), \(\varepsilon = 1\), and \(k = 2\), then
\[
(-1)^{\impindx{\sgen\varepsilon} - \impindx{\sgen{q, \varepsilon}}} = (-1)^{(q - 1)/2};
\]
and
\item if \(p = 2\), \(\varepsilon = -1\), and \(k > 2\), then
\[
(-1)^{\impindx{\sgen\varepsilon} - \impindx{\sgen{q, \varepsilon}}} = \Legend 2 q = \Legend n q\Legend{n'}q.
\]
\end{itemize}

If
\begin{itemize}
\item \(p\) is odd, \(\varepsilon = -1\), and \(p \equiv 3 \pmod4\),
or
\item \(p = 2\) and
	\begin{itemize}
	\item \(\varepsilon = 1\) and \(k = 1\) or
	\item \(\varepsilon = -1\) and \(k \le 2\),
	\end{itemize}
\end{itemize}
then the group \(\sgen\varepsilon\bslash(\Z/n\Z)\mult\) has odd order, so that \impindx{\sgen\varepsilon} and \impindx{\sgen{q, \varepsilon}} are both odd, whence \((-1)^{\impindx{\sgen\varepsilon} - \impindx{\sgen{q, \varepsilon}}} = 1\).  In particular, if \(p = 2\), \(\varepsilon = -1\), and \(k = 2\), then
\[
(-1)^{\impindx{\sgen\varepsilon} - \impindx{\sgen{q, \varepsilon}}} = 1 = \Legend4 q.
\]

By the preceding three paragraphs, \subpref{p-power} and \subpref{2-power} now hold by induction on \(k\) (the results being obvious if \(k = 0\)).

Next we prove \subpref{CRT}.  We use the ring isomorphism
\begin{equation}
\tag{$*$}
\sublabel{eq:CRT}
\Z/m n\Z \cong \Z/m\Z \oplus \Z/n\Z.
\end{equation}
Since the desired equalities are symmetric in \(m\) and \(n\), and multiplicative in \(q\), it suffices to prove them for \(q \in (\Z/m\Z)\mult\).  The natural projection \anonmap{\Z/m n\Z}{\Z/m\Z} is a \(q\)-equivariant map with fibres of cardinality \(n\), so
\[
\sgn^+_{m n}(q) = \sgn^+_m(q)^n = \sgn^+_m(q)^n\sgn^+_n(q)^m.
\]

Now fix \(a \in \Z/m n\Z\).  If \(n \mid 2a\), \ie, if \subeqref{eq:CRT} carries \(a\) into \(\Z/m\Z \oplus (\Z/n\Z)[2]\), where \(R[2]\) denotes the ring of \(2\)-torsion elements of \(R\), then \anonmapto{\sgen{-1}\dota q^i a}{\sgen{-1}\dota(q^i a + m\Z)} is a bijection between the \(q\)-orbit through \(\sgen{-1}\dota a \in \sgen{-1}\bslash\Z/m n\Z\) and the \(q\)-orbit through \(\sgen{-1}\dota(a + m\Z) \in \sgen{-1}\bslash\Z/m\Z\).  Otherwise, \anonmap{\sgen{-1}\dota q^i a}{q^i a + m\Z} is a bijection between the \(q\)-orbit through \(\sgen{-1}\dota a \in \sgen{-1}\bslash\Z/m n\Z\) and the \(q\)-orbit through \(a + m\Z \in \Z/m\Z\).  That is, the cycle type of \(q\) on \(\sgen{-1}\bslash\Z/m n\Z\) is the union of \(\card{\sgen{-1}\bslash\Z/n\Z} - \card{(\Z/n\Z)[2]}\) copies of the cycle type of \(q\) on \(\Z/m\Z\), and \card{(\Z/n\Z)[2]} copies of the cycle type of \(q\) on \(\sgen{-1}\bslash\Z/m\Z\); so
\begin{align*}
\sgn^-_{m n}(q)
={} & \sgn^+_m(q)^{\card{\sgen{-1}\bslash\Z/n\Z} - \card{(\Z/n\Z)[2]}}\sgn^-_m(q)^{\card{(\Z/n\Z)[2]}} \\
={} & \sgn^+_m(q)^{\card{\sgen{-1}\bslash\Z/n\Z} - \card{(\Z/n\Z)[2]}}\sgn^-_m(q)^{\card{(\Z/n\Z)[2]}} \times{} \\
    & \qquad\sgn^+_n(q)^{\card{\sgen{-1}\bslash\Z/m\Z} - \card{(\Z/m\Z)[2]}}\sgn^-_n(q)^{\card{(\Z/m\Z)[2]}}.
\end{align*}
\subpref{CRT} follows upon observing that \(\card{\sgen{-1}\bslash\Z/a\Z} = \rup{\frac1 2(a + 1)}\) and \(\card{(\Z/a\Z)[2]} \equiv a \pmod2\) for \(a \in \Z \setminus \sset0\).
\end{proof}

\begin{prop}
\label{prop:Legendre}
If \(n\) and \(q\) are relatively prime integers, then
\begin{multline*}
\sgn^+_n(q) = \begin{cases}
(-1)^{(q - 1)/2}, & n \equiv 0 \pmod4,    \\
\Legend q n,      & n \equiv 1, 3 \pmod4, \\
1,                & n \equiv 2 \pmod4
\end{cases}
\qand \\
\sgn^-_n(q) = \begin{cases}
\Legend n q, & n \equiv 0 \pmod4,    \\
\Legend q n, & n \equiv 1 \pmod4,    \\
1,           & n \equiv 2, 3 \pmod4.
\end{cases}
\end{multline*}
\end{prop}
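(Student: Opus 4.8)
The plan is to assemble the four stated values from the prime-power evaluations of Proposition~\ref{prop:pre-Legendre} by multiplicativity. Since $\sgn^\pm_n$ depends only on $\lvert n\rvert$, and is trivial when $\lvert n\rvert = 1$, I would assume $n$ is an integer $\ge 2$ and factor it as $n = 2^{k_0}p_1^{k_1}\dotsm p_r^{k_r}$ with the $p_i$ distinct odd primes, $k_0 \ge 0$ and each $k_i \ge 1$. Peeling off one prime power at a time and applying the Chinese Remainder multiplicativity of Proposition~\ref{prop:pre-Legendre}\subpref{CRT} (by induction on $r$), I would express $\sgn^+_n(q)$ and $\sgn^-_n(q)$ as products of integer powers of the factors $\sgn^\pm_{2^{k_0}}(q)$ and $\sgn^\pm_{p_i^{k_i}}(q)$, then substitute the values of the latter from parts \subpref{p-power} and \subpref{2-power}.

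Next comes the simplification, which is where most of the length lies. Because every quantity in sight takes values in $\sgen{-1}$, exponents matter only modulo $2$, and most factors collapse: when $k_0 \ge 1$, so that $2^{k_0}$ is even and the cofactor $n/2^{k_0}$ is odd, all odd-prime contributions to $\sgn^+_n(q)$ drop out and only $\sgn^+_{2^{k_0}}(q)$ survives --- this settles the cases $n \equiv 0$ and $n \equiv 2 \pmod 4$ together; likewise $\sgn^-_{p^k}(q) = 1$ for every prime $p \equiv 3 \pmod 4$, and $\Legend q{p^k} = \Legend q p^k = 1$ whenever $k$ is even. Collecting the surviving factors and using multiplicativity of the Jacobi symbol in its lower argument reassembles $\prod_i\Legend q{p_i^{k_i}}$ into $\Legend q n$, which for $\sgn^+$ already yields the stated answer (the odd case $n \equiv 1, 3 \pmod 4$ being exactly $\Legend q n$).

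I expect the $\sgn^-$ computation to be the main obstacle. Its combination rule in \subpref{CRT} carries exponents that are roughly $(a - 1)/2$, so to recover $\Legend q n$ when $n \equiv 1 \pmod 4$ and $1$ when $n \equiv 3 \pmod 4$ one must track, modulo $2$, the total number of odd prime factors of $n$ --- counted with multiplicity --- that are $\equiv 3 \pmod 4$; this is still elementary, being arithmetic modulo $4$, but fiddly (the case $n \equiv 2 \pmod 4$ is by contrast immediate, since there $\sgn^\pm_2(q) = 1$ and the odd-part exponents vanish). Genuine quadratic reciprocity enters only in the case $4 \mid n$: there, writing $m = n/2^{k_0}$, the surviving $2$-part factor is $\sgn^-_{2^{k_0}}(q) = \Legend{2^{k_0}}q$ by part \subpref{2-power}, which presents $q$ in the lower slot, while the odd part contributes the Jacobi symbol $\Legend q m$ together with a power of $\sgn^+_{2^{k_0}}(q) = (-1)^{(q - 1)/2}$; reciprocity trades $\Legend q m$ for $\Legend m q$, the resulting reciprocity sign is precisely that surviving power of $(-1)^{(q-1)/2}$, and after it cancels what remains is exactly $\Legend n q$. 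I would organise the whole argument by splitting off the $2$-part at the very start and then inducting over the odd primes, which keeps the bookkeeping under control; the four cases of each formula then follow by inspection.
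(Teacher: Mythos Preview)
Your proposal is correct and takes essentially the same route as the paper: both start from the prime-power values of Proposition~\ref{prop:pre-Legendre}, build up the general case via the CRT multiplicativity of part~\subpref{CRT} by first treating odd $n$ inductively and then adjoining the $2$-part, and invoke quadratic reciprocity only in the case $4 \mid n$ to convert $\Legend q m$ into $\Legend m q$. The paper organizes the odd-$n$ induction as a small table of $\sgn^-_{mn}$ according to the residues of $m$ and $n$ modulo $4$, which is equivalent to your bookkeeping of prime factors congruent to $3 \pmod 4$.
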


\begin{rem}
By quadratic reciprocity, for \(q\) odd, we have the more nearly uniform expression
\[
\sgn^-_n(q) = \begin{cases}
\Legend n q, & n \equiv 0, 1 \pmod4, \\
1,           & n \equiv 2, 3 \pmod4.
\end{cases}
\]
\end{rem}

\begin{proof}
By Lemma \ref{prop:pre-Legendre}(\ref{prop:pre-Legendre:p-power}, \ref{prop:pre-Legendre:2-power}), the formul{\ae} are correct when \(n\) is a prime power.

By Lemma \hiref{prop:pre-Legendre}{CRT}, if \(m, n \in \Z\) are odd and relatively prime, then, on \((\Z/m n\Z)\mult\), we have that \(\sgn^+_{m n} = \sgn^+_m\sgn^+_n\), and that \(\sgn^-_{m n}\) is given by the following table:
\[\begin{array}{c|cc}
	& m \equiv 1 \pmod4        & m \equiv 3 \pmod4                 \\\hline
n \equiv 1 \pmod4
	& \sgn^-_m\sgn^-_n         & \sgn^+_n\sgn^-_m\sgn^-_n          \\
n \equiv 3 \pmod4
	& \sgn^+_m\sgn^-_m\sgn^-_n & \sgn^+_m\sgn^+_n\sgn^-_m\sgn^-_n.
\end{array}\]
We may then prove by induction on the number of distinct prime factors of \(n\) that
\begin{itemize}
\item the formula for \(\sgn^+_n\) is correct when \(n\) is odd,
\item \(\sgn^-_n = \sgn^+_n\) when \(n \equiv 1 \pmod4\),
and
\item \(\sgn^-_n = 1\) when \(n \equiv 3 \pmod4\).
\end{itemize}
These three facts together show that the formula for \(\sgn^-_n\) is correct when \(n\) is odd.

Again by Lemma \hiref{prop:pre-Legendre}{CRT}, if \(k \in \Z_{> 0}\) and \(n \in \Z\) is odd, then, on \((\Z/2^k n\Z)\mult\), we have that \(\sgn^+_{2^k n} = \sgn^+_{2^k}\), and that \(\sgn^-_{2^k n}\) is given by the following table:
\[\begin{array}{c|cc}
	& k = 1            & k > 1                             \\\hline
n \equiv 1 \pmod 4
	& \sgn^-_2         & \sgn^-_{2^k}\sgn^+_n              \\
n \equiv 3 \pmod 4
	& \sgn^+_2\sgn^-_2 & \sgn^+_{2^k}\sgn^-_{2^k}\sgn^+_n.
\end{array}\]
This shows that the formula for \(\sgn^+_n\) is correct for all \(n \in \Z\); and upon noting that, by what we have already shown and quadratic reciprocity  \cite{serre:arithmetic}*{Theorem I.I.3.3.6}, we have for \(k > 1\) and \(n \equiv 3 \pmod4\) that
\begin{multline*}
\sgn^+_{2^k}(q)\sgn^-_{2^k}(q)\sgn^+_n(q)
= (-1)^{(q - 1)/2}\Legend{2^k}q\Legend n q \\
= (-1)^{(q - 1)(n - 1)/4}\Legend{2^k}q\Legend n q
= \Legend{2^k n}q,
\end{multline*}
also that the formula for \(\sgn^-_n\) is correct for all \(n \in \Z\).
\end{proof}

\subsection{Root systems}
\label{sec:root-system}

For this section and \S\ref{sec:parabolic}, fix a root system \Root.  (All root systems are assumed to be reduced \bourbaki*{\S VI.1.4}.)  For \(\root \in \Root\), we write \mnotn{\refl_\root} for the reflection in the root \root; we will view this as a transformation of \(\R\Root\) or of its dual space, as appropriate.  Write \(W = \mnotn{W(\Root)}\) for the Weyl group of \Root, and \(A = \mnotn{\Aut(\Root)}\) for the group of automorphisms of \Root.  See \bourbaki*{Chapter VI} for details, especially \bourbaki*{\S VI.1.1} for basics on root systems.

If \Simple is a system of simple roots in \Root (\ie, a base of \Root, in the terminology of \bourbaki*{D\'efinition VI.1.5.2}), then we write \(\mnotn{\Aut(\Root, \Simple)} = \stab_{\Aut(\Root)}(\Simple)\).  Then
\begin{equation}
\label{eq:aut-sd}
A = W \rtimes \Aut(\Root, \Simple)
\end{equation}
\bourbaki*{Proposition VI.1.5.16}.  If we wish to emphasise the choice of \Simple, then we shall refer to \(\eqref{eq:aut-sd}_\Simple\) rather than just to \eqref{eq:aut-sd}.

Remember that we have already written \(\sgn_X\) for the sign character of a symmetric group (Definition \ref{defn:sgn-perm}).  We also write \(\sgn_\Root\) for a different character, defined below.

\begin{defn}
\label{defn:sgn-Root-A}
Let \Simple be a system of simple roots in \Root.  By \bourbaki*{Th\'eor\`eme IV.1.5.2(vii)}, there is a unique homomorphism \mnotn{\sgn_\Root} from \(A = W \rtimes \Aut(\Root, \Simple)\) to \sgen{-1} that is trivial on \(\Aut(\Root, \Simple)\), and sends \(\refl_\root\) to \(-1\) for each \(\root \in \Simple\).
\end{defn}

In fact, we can refine slightly the restriction of \(\sgn_\Root\) to \(W\).  This will be useful in Proposition \ref{prop:minus-sign}.

\begin{defn}
\label{defn:sgn-Root-sl}
Suppose that \Root is irreducible.  For \(\root \in \Root\), define
\[
\epsilon\textsub{\root, long} = \begin{cases}
-1, & \text{\Root is simply laced, or \(\root\) is long}       \\
1,  & \text{\Root is not simply laced, and \(\root\) is short}
\end{cases}
\]
(where, as usual, an irreducible root system is said to be simply laced if all roots have the same length; see \bourbaki*{Proposition VI.1.4.12(ii)}) and \(\epsilon\textsub{\root, short} = -\epsilon\textsub{\root, long}\).  Write \textasteriskcentered\ for `long' or `short'.  Fix a system \Simple of simple roots for \Root.  By \bourbaki*{VI.1.3, p.~148, 1)--11)}, especially \bourbaki*{VI.1.3, p.~148, 4)--7)}, we have that
\[
(\epsilon\textsub{\root, \textasteriskcentered}\epsilon\textsub{\otherroot, \textasteriskcentered})^{m(\root, \otherroot)} = 1 \qforall{\(\root, \otherroot \in \Simple\),}
\]
where, as usual, \(m(\root, \otherroot)\) is the order of \(\refl_\root\refl_\otherroot\); so, by \bourbaki*{D\'efinition IV.1.3.3 and Th\'eor\`eme VI.1.5.2(vii)}, there is a unique homomorphism
\mnonotn{\sgn\textsub{\Root, long}}\mnonotn{\sgn\textsub{\Root, short}}%
\map{\sgn\textsub{\Root, \textasteriskcentered}}W{\sgen{-1}} such that \(\sgn\textsub{\Root, \textasteriskcentered}(\refl_\root) = \epsilon\textsub{\root, \textasteriskcentered}\) for all \(\root \in \Simple\).

If \Root is reducible, then we define \(\sgn\textsub{\Root, \textasteriskcentered}\) in the obvious way, as the product over all irreducible components \(\Root'\) of \Root (with multiplicity) of the characters \(\sgn\textsub{\(\Root'\), \textasteriskcentered}\).
\end{defn}

\begin{rem}
By \bourbaki*{Th\`eor\`eme IV.1.5.2(i) and Proposition VI.1.5.15}, \(\sgn_\Root\) is trivial on any element of \(A\) that fixes a system of simple roots in \Root, and
\[
\sgn_\Root(\refl_\root) = -1
\qandq
\sgn\textsub{\Root, \textasteriskcentered}(\refl_\root) = \epsilon\textsub{\root, \textasteriskcentered}\qforall{\(\root \in \Root\).}
\]
In particular, \(\sgn_\Root\) and \(\sgn\textsub{\Root, \textasteriskcentered}\) do not depend on the choice of \Simple.

Note that \(\sgn_\Root = \sgn\textsub{\Root, long}\sgn\textsub{\Root, short}\).  If \Root is not simply laced, then \(\sgn\textsub{\Root, long}\) and \(\sgn\textsub{\dual\Root, short}\) are identified by the natural isomorphism \(W(\Root) \cong W(\dual\Root)\), where \dual\Root is the dual root system to \Root \bourbaki*{\S VI.1.1, p.~144}.
\end{rem}

\subsection{Parabolic subgroups of automorphism groups}
\label{sec:parabolic}

As in \S\ref{sec:root-system}, \Root is a root system with Weyl group \(W\) and automorphism group \(A\).

\begin{defn}
\label{defn:elliptic}
If \Simple is a system of simple roots in \Root, and \subSimple is a subset of \Simple, then we write
\[
\mnotn{\Root_\subSimple} = \Root \cap \Z\subSimple\textq,
\mnotn{W_\subSimple} = \gen{\refl_\root}{\root \in \subSimple}\textq,\andq
\mnotn{A_{\subSimple \subseteq \Simple}} = W_\subSimple \rtimes \stab_{\Aut(\Root, \Simple)}(\subSimple).
\]
A subsystem of \Root of the form \(\Root_\subSimple\) is called a \term{Levi subsystem}.  A subgroup of \(W\) of the form \(W_\subSimple\), or of \(A\) of the form \(A_{\subSimple \subseteq \Simple}\), is called \term{parabolic}; and an element \(w \in W\) (respectively, \(w \in A\)) is called \term{elliptic} if it belongs to no proper parabolic subgroup of \(W\) (respectively, of \(A\)).
\end{defn}

\begin{rem}
\toplabel{rem:parabolic}
We use the notation of Definition \ref{defn:elliptic}.
\begin{enumerate}
\item A subset \(\Root'\) of \Root is a Levi subsystem if and only if \(\R\Root' \cap \Root = \Root'\).  The `only if' direction is clear; for the `if' direction, see \bourbaki*{Proposition VI.1.7.24}.
\item\sublabel{refls} The restriction map \anonmap{W_\subSimple}{W(\Root_\subSimple)} is an isomorphism, and the sets \set{\root \in \Root}{\refl_\root \in W_\subSimple} and \set{\root \in \Root}{\refl_\root \in A_{\subSimple \subseteq \Simple}} both equal \(\Root_\subSimple\) \bourbaki*{Th\'eor\`eme IV.1.8.2(i) and Corollaire V.3.2}.
\item\sublabel{proper} By \pref{rem:parabolic:refls} (and \bourbaki*{\S VI.1.1 and Proposition VI.1.5.15}), a parabolic subgroup \(W_\subSimple\) of \(W\), or \(A_{\subSimple \subseteq \Simple}\) of \(A\), is proper if and only if \(\subSimple \ne \Simple\).
\item The intersection \(A_{\subSimple \subseteq \Simple} \cap W\) is \(W_\subSimple\).  In particular, by \pref{rem:parabolic:proper}, an element \(w \in W\) is elliptic (when considered as an element of \(W\)) if and only if it is elliptic when considered as an element of \(A\).
\end{enumerate}
\end{rem}

\begin{rem}
Parabolic subgroups of automorphism groups of root systems are less well behaved than parabolic subgroups of Coxeter groups.  For example, the group \(A_{\subSimple \subseteq \Simple}\) can depend on \Simple, not just on \subSimple; and the restriction map \anonmap{A_{\subSimple \subseteq \Simple}}{\Aut(\Root_\subSimple)} need not be injective or surjective.

For example, let \(\Root = \An[3]\), and use the notation of \bourbaki*{Planche VI.I}.  Write \(\root_0 = -(\root_1 + \root_2 + \root_3)\) for the lowest root of \Root (with respect to \Simple).  Put \(\Simple = \sset{\root_1, \root_2, \root_3}\) and \(\Simple' = \sset{\root_0, \root_1, \root_2}\), and let \(w_0\) be the non-trivial element of \(\Aut(\Root, \Simple)\).

Restriction furnishes an order-\(2\) covering \anonmap{A_{\sset{\root_2} \subseteq \Simple}}{\Aut(\Root_{\sset{\root_2}}) = W(\Root_{\sset{\root_2}})}, with kernel generated by \(w_0\), but an isomorphism \(A_{\sset{\root_2} \subseteq \Simple'} \cong \Aut(\Root_{\sset{\root_2}})\).

Since the unique element \(w_0\refl_{\root_1}\refl_{\root_2}\refl_{\root_3}\) of \(A\) that swaps \(\root_1\) and \(\root_2\) does not belong to \(A_{\sset{\root_1, \root_2} \subseteq \Simple}\) or \(A_{\sset{\root_1, \root_2} \subseteq \Simple'}\), neither surjects onto \(\Aut(\Root_{\sset{\root_1, \root_2}}) \cong W(\Root_{\sset{\root_1, \root_2}}) \times \Z/2\Z\).  (In fact, both map isomorphically onto \(W(\Root_{\sset{\root_1, \root_2}})\).)
\end{rem}

As for Weyl groups, there is a geometric interpretation of the notion of an elliptic automorphism of a Weyl group.

\begin{prop}
\label{prop:elliptic}
An element of \(A\) is elliptic if and only if it has no non-\(0\) fixed vectors in \(\R\Root\).
\end{prop}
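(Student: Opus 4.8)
The plan is to prove this by relating elliptic elements of $A$ to elliptic elements of Weyl groups, for which the geometric characterization is classical. The key observation is the semidirect product decomposition $A = W \rtimes \Aut(\Root, \Simple)$ from \eqref{eq:aut-sd}: any $w \in A$ can be analyzed by passing to a root system on which $w$ acts as an honest Weyl group element. Specifically, given $w \in A$, I would like to find a root system $\Root'$ (depending on $w$) such that $w$ lies in $W(\Root')$ and such that the fixed-point space of $w$ on $\R\Root$ matches the fixed-point space of $w$ on $\R\Root'$; then the known result for Weyl groups (elliptic $\iff$ no nonzero fixed vector) transfers.

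First I would handle the Weyl-group case as a black box: for $w \in W$, it is standard (e.g.\ from the theory of reflection groups, or directly from \bourbaki*{\S VI.1} together with the fact that the fixed-point space of a parabolic $W_\subSimple$ is the orthogonal complement of $\R\subSimple$) that $w$ is elliptic in $W$ iff $w$ has no nonzero fixed vector in $\R\Root$; by Remark \ref{rem:parabolic}(iv), this is the same as being elliptic in $A$. So the content is the case $w \notin W$. Here I would use that $w$ normalizes $W$ and acts on the set of systems of simple roots; replacing $w$ by a $W$-conjugate (which changes neither ellipticity, by conjugation-invariance of the class of parabolic subgroups, nor the dimension of the fixed space), I may assume $w$ stabilizes a chosen $\Simple$, i.e.\ $w \in \Aut(\Root, \Simple)$. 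Then $w$ permutes the simple roots, hence permutes the coordinates, and its fixed space on $\R\Root$ is cut out by the orbits of $w$ on $\Simple$.

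The crux is then to show: for $w \in \Aut(\Root, \Simple)$, $w$ is elliptic in $A$ iff $w$ has no nonzero fixed vector. For the ``only if'' direction (contrapositive): if $w$ fixes a nonzero vector $v$, I want to produce a proper parabolic $A_{\subSimple \subseteq \Simple}$ containing $w$. Since $w$ permutes $\Simple$ and fixes $v$, the set $\subSimple = \set{\root \in \Simple}{\langle v, \root\rangle = 0}$ is $w$-stable, so $w \in \stab_{\Aut(\Root,\Simple)}(\subSimple) \subseteq A_{\subSimple \subseteq \Simple}$; and $\subSimple \ne \Simple$ because $v \ne 0$ is not orthogonal to every simple root. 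By Remark \ref{rem:parabolic}(iii) this parabolic is proper, so $w$ is not elliptic. For the ``if'' direction (contrapositive): if $w$ lies in a proper parabolic $A_{\subSimple \subseteq \Simple'}$ for some system $\Simple'$, then writing $w = w_1 \sigma$ with $w_1 \in W_\subSimple$ and $\sigma \in \stab_{\Aut(\Root,\Simple')}(\subSimple)$, both $w_1$ and $\sigma$ fix pointwise the orthogonal complement of $\R\subSimple$ (for $w_1$ because $W_\subSimple = W(\Root_\subSimple)$ acts trivially there by Remark \ref{rem:parabolic}(ii); for $\sigma$ because $\sigma$ stabilizes $\Simple' \setminus \subSimple$, hence fixes the dual basis vectors spanning $(\R\subSimple)^\perp$ — here I must be slightly careful that $\sigma$ genuinely acts trivially on that complement, which follows since $\sigma \in \Aut(\Root,\Simple')$ is determined by a permutation of $\Simple'$ fixing each element of $\Simple' \setminus \subSimple$). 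Since $\subSimple \subsetneq \Simple'$ is a proper subset, $(\R\subSimple)^\perp \ne 0$, giving a nonzero fixed vector.

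The main obstacle I anticipate is the ``if'' direction in the non-Weyl case: one must verify that an element of $\stab_{\Aut(\Root, \Simple')}(\subSimple)$ really does fix the relevant complement pointwise, which requires knowing that such an automorphism acts on $\R\Root$ through its permutation of $\Simple'$ in a way compatible with the dual-basis description of $(\R\subSimple)^\perp$. This is where \bourbaki*{Proposition VI.1.5.15} (the fundamental domain / parabolic structure) and the explicit description of $\Aut(\Root,\Simple')$ as diagram automorphisms are needed. Once that compatibility is nailed down, everything assembles into the stated equivalence; alternatively, if this turns out to be delicate, one can instead reduce to the Weyl-group statement by embedding $w$ into the Weyl group of a larger root system on which $w$ becomes inner (the standard trick of passing from a diagram automorphism to an inner automorphism of a folded or extended system), though I expect the direct argument above to be cleaner.
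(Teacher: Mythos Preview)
Your reduction step fails: it is not true that every \(w \in A \setminus W\) is \(W\)-conjugate to an element of \(\Aut(\Root, \Simple)\).  Take \(\Root = \An[2]\), let \(w_0\) be the nontrivial diagram automorphism, and put \(w = \refl_{\root_1} w_0\).  A direct check shows that the \(W\)-conjugacy class of \(w\) is \(\{\refl_{\root_1} w_0,\ \refl_{\root_2} w_0\}\), which does not meet \(\Aut(\Root, \Simple) = \{1, w_0\}\); yet \(w\) has characteristic polynomial \(\lambda^2 - \lambda + 1\) on \(\R\Root\), so it has no nonzero fixed vector and is elliptic.  Thus your two cases ``\(w \in W\)'' and ``\(w \in \Aut(\Root, \Simple)\)'' do not exhaust \(A\) even up to \(W\)-conjugacy, and the main part of your argument never applies to elements such as this one.

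There is a second error in your ``if'' direction: you assert that \(\sigma \in \stab_{\Aut(\Root, \Simple')}(\subSimple)\) fixes each element of \(\Simple' \setminus \subSimple\) and hence acts trivially on \((\R\subSimple)^\perp\).  Stabilising \(\subSimple\) setwise only forces \(\sigma\) to permute \(\Simple' \setminus \subSimple\); it need not fix those roots individually (take \(\Root = \An[3]\), \(\subSimple = \{\root_2\}\), and \(\sigma\) the diagram flip).  The repair is to choose an orbit \(\omega\) of \(\sigma\) on \(\Simple' \setminus \subSimple\) and use the projection of \(\sum_{\root \in \omega} \root\) onto \(\subSimple^\perp\), which is \(\sigma\)-fixed, \(w_1\)-fixed, and nonzero; this is exactly what the paper does.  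For the other direction the paper avoids your failed reduction entirely: given a nonzero \(w\)-fixed vector \(\chi\), it chooses \(\Simple\) so that \(\chi\) lies in the closure of the associated chamber, decomposes \(w = w' w_0\) relative to \emph{this} \(\Simple\), and then the fundamental-domain property (\bourbaki*{Th\'eor\`eme VI.1.5.2(ii)} together with \bourbaki*{Proposition V.3.3.1}) forces \(w_0\chi = \chi\) and \(w' \in W_\subSimple\) with \(\subSimple = \set{\root \in \Simple}{\pair\root\chi = 0}\).
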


\begin{proof}
For the `if' direction, suppose that
\begin{itemize}
\item \Simple is a system of simple roots for \Root,
\item \subSimple is a \emph{proper} subset of \Simple,
and
\item \(w \in A_{\subSimple \subseteq \Simple}\),
\end{itemize}
and let \(w = w' \rtimes w_0\) be the decomposition \(\eqref{eq:aut-sd}_\Simple\).  If \(\omega\) is an orbit of \(w_0\) on \(\Simple \setminus \subSimple\), then \(\sum_{\root \in \omega} \root\), hence also its projection \(\chi\) on the orthogonal complement \(\subSimple^\perp\), is \(w_0\)-fixed; and \(\chi\) belongs to \(\sum_{\root \in \omega} \root + \R\subSimple\), hence is non-\(0\).  Since \(w'\) is generated by reflections in elements of \subSimple, it fixes \(\subSimple^\perp\) (pointwise), hence also \(\chi\).

For the `only if' direction, suppose that \(\chi \in \R\Root\) is a non-\(0\) vector fixed by \(w\).  Choose a chamber \(C\) in \(\R\Root\), in the sense of \bourbaki*{\S VI.1.5}, whose closure contains \(\chi\) \bourbaki*{Th\'eor\`eme VI.1.5.2(ii)}.  Let \(w = w' \rtimes w_0\) be the decomposition \(\eqref{eq:aut-sd}_\Simple\), where \Simple is the system of simple roots defined by \(C\) \bourbaki*{D\'efinition VI.1.5.2}.  By \bourbaki*{Th\'eor\`eme VI.1.5.2(vi)}, we have that \(w_0\chi\) lies in the closure of \(C\).  Since \(w'w_0\chi = w\chi = \chi\), another application of \bourbaki*{Th\'eor\`eme VI.1.5.2(ii)} gives that \(w_0\chi = \chi\).  Now put \(\subSimple = \set{\root \in \Simple}{\pair\root\chi = 0}\).  Since \(w_0\chi = \chi\), also \(w_0\subSimple = \subSimple\).  By \bourbaki*{Proposition V.3.3.1}, \(w' \in W_\subSimple\).  That is, \(w \in A_{\subSimple \subseteq \Simple}\).  Since \(\chi \ne 0\), we have that \(\subSimple \ne \Simple\), so that \(A_{\subSimple \subseteq \Simple}\) is a proper parabolic subgroup of \(A\).
\end{proof}

Our goal in this paper is to compute the sign \Legend\sigma\theta (see Definition \ref{defn:root-sign}) in general.  In order to make this computation feasible, we need to reduce, in an appropriate sense, to elliptic automorphisms of Weyl groups.  The next two results allow us to do so.

\begin{prop}
\label{prop:parabolic-intersect}
The intersection of two parabolic subgroups of \(A\) is again a parabolic subgroup.
\end{prop}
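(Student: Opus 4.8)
The plan is to relate parabolic subgroups of $A$ to pointwise stabilisers of subspaces of $\R\Root$, and then to exploit that the pointwise stabiliser of a sum of subspaces is the intersection of the pointwise stabilisers of the summands. For a subspace $V$ of $\R\Root$, write $A_V = \set{a \in A}{a\chi = \chi \text{ for all } \chi \in V}$.

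First I would show that every parabolic subgroup $P = A_{\subSimple \subseteq \Simple}$ of $A$ equals $A_V$ for $V = (\R\Root)^P$. The inclusion $P \subseteq A_V$ is immediate. For the reverse, consider the (nonempty, open, convex) cone of vectors $\chi$ with $\pair\root\chi = 0$ for $\root \in \subSimple$ and $\pair\root\chi > 0$ for $\root \in \Simple \setminus \subSimple$; it is stable under $\stab_{\Aut(\Root, \Simple)}(\subSimple)$, so it contains a vector $\bar\chi$ fixed by that group, and, as $\bar\chi$ is also fixed by $W_\subSimple$, it lies in $V$. If $a \in A$ fixes $\bar\chi$, write $a = w \rtimes w_0$ via $\eqref{eq:aut-sd}_\Simple$; the argument from the `only if' direction of Proposition \ref{prop:elliptic} (two points of a closed chamber lying in one $W$-orbit coincide) gives $w_0\bar\chi = \bar\chi$, hence $w\bar\chi = \bar\chi$, hence $w \in W_\subSimple$; and $w_0$ preserves the unique face of the chamber with base $\Simple$ whose relative interior contains $\bar\chi$, forcing $w_0\subSimple = \subSimple$. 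Thus $a \in P$, so $A_V \subseteq A_{\bar\chi} \subseteq P$, and $P = A_V = A_{\bar\chi}$.

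Next I would prove the converse: if $V$ is a subspace with $V = (\R\Root)^{A_V}$ — which entails no loss, since replacing $V$ by $(\R\Root)^{A_V}$ does not change $A_V$ — then $A_V$ is parabolic. By the fixed-point theorem invoked in the proof of Proposition \ref{prop:elliptic}, $A_V \cap W$ is the parabolic subgroup of $W$ generated by the reflections $\refl_\root$ with $\root \perp V$, and $\Root \cap V^\perp$ (the set of such roots) is a Levi subsystem (as in Remark \ref{rem:parabolic}). Picking $\chi \in V$ avoiding the finitely many proper subspaces $V \cap \root^\perp$ with $\root \in \Root \setminus V^\perp$, so that $\Root \cap \chi^\perp = \Root \cap V^\perp$, then a chamber $C$ with $\chi \in \bar C$, its base $\Simple$, and $\subSimple = \set{\root \in \Simple}{\pair\root\chi = 0}$, one obtains $\Root_\subSimple = \Root \cap V^\perp$ and $A_V \cap W = W_\subSimple$; and the chamber argument of the first step, applied to $\chi$ (fixed by $A_V$), yields $A_V \subseteq A_{\subSimple \subseteq \Simple}$. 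The reverse inclusion amounts to showing that $\stab_{\Aut(\Root, \Simple)}(\subSimple)$ fixes $V$ pointwise, and here one must choose $C$ adapted to $A_V$, among the chambers whose closure meets $V$ in a cone of full dimension in $V$, so that the image of $A_{\subSimple \subseteq \Simple}$ in $A/W \cong \Aut(\Root, \Simple)$ coincides with that of $A_V$; recall from the Remark after Definition \ref{defn:elliptic} that $A_{\subSimple \subseteq \Simple}$ depends on $\Simple$, not only on $\subSimple$.

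Granting these two facts the proposition follows: for parabolic $P_1, P_2$ the first gives $P_i = A_{V_i}$ with $V_i = (\R\Root)^{P_i}$; an automorphism fixes $V_1$ and $V_2$ pointwise precisely when it fixes $V_1 + V_2$ pointwise, so $P_1 \cap P_2 = A_{V_1 + V_2}$; and enlarging $V_1 + V_2$ to $(\R\Root)^{P_1 \cap P_2}$ and applying the second fact shows $P_1 \cap P_2$ is parabolic. I expect the main obstacle to be the reverse inclusion in the converse step: already when $\Simple_1 = \Simple_2 = \Simple$ the naive guess $P_1 \cap P_2 = A_{\subSimple_1 \cap \subSimple_2 \subseteq \Simple}$ is wrong, so one genuinely has to construct the right base.
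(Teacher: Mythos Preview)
Your route is quite different from the paper's.  The paper works combinatorially, mimicking Geck--Pfeiffer's argument for Coxeter groups: after arranging that both parabolics are indexed by subsets \(\subSimple, \othersubSimple\) of one base \(\Simple\) (the second twisted by some \(u \in W\)), it replaces \(u\) by the element of minimal length in \(W_\subSimple u W_\othersubSimple\) and shows that any \(v = v' \rtimes v_0\) in the intersection has \(v_0\) commuting with \(u\) (by uniqueness of that minimal-length representative), whence \(v' \in W_{\subSimple \cap u\othersubSimple}\) and \(v_0 \in \stab_{\Aut(\Root, \Simple)}(\subSimple \cap u\othersubSimple)\).  Your approach via pointwise stabilisers \(A_V\) of subspaces is geometric; your first step, that every parabolic equals \(A_{\bar\chi}\) for a single well-chosen vector \(\bar\chi\), is correct and pleasant.

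The gap you flag in the converse step is real, and in \(\Dn[4]\) it cannot be filled.  Take the standard base \(\Simple = \{\root_1, \root_2, \root_3, \root_4\}\) (with \(\root_2\) central), so that \(\Aut(\Root, \Simple) \cong \Sgp_3\) permutes \(\{\root_1, \root_3, \root_4\}\).  Put \(P_1 = A_{\emptyset \subseteq \Simple} = \Aut(\Root, \Simple)\) and \(P_2 = A_{\emptyset \subseteq \refl_{\root_1}\Simple} = \refl_{\root_1}P_1\refl_{\root_1}\).  An element \(\sigma \in P_1\) lies in \(P_2\) exactly when it commutes with \(\refl_{\root_1}\), i.e.\ when \(\sigma\root_1 = \root_1\); thus \(P_1 \cap P_2\) is the order-\(2\) group generated by the diagram automorphism \(\sigma_{34}\) swapping \(\root_3\) and \(\root_4\).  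But any parabolic \(A_{J \subseteq \Simple'}\) of \(\Aut(\Dn[4])\) has order \(\lvert W_J\rvert \cdot \lvert\stab_{\Aut(\Root, \Simple')}(J)\rvert\), and the second factor (the stabiliser in \(\Sgp_3\) of a subset of the three outer nodes) is always \(2\) or \(6\); since \(J = \emptyset\) forces it to be \(6\), no parabolic has order \(2\).  In your language, with \(V = (\R\Root)^{\langle\sigma_{34}\rangle} = \R\root_1 \oplus \R\root_2 \oplus \R(\root_3 + \root_4)\) one checks directly that \(A_V = \langle\sigma_{34}\rangle\), so \(V = (\R\Root)^{A_V}\), yet \(A_V\) is not parabolic: there is no chamber ``adapted to \(A_V\)'', because the image of \(A_V\) in \(A/W\) is a proper subgroup whereas every \(A_{\emptyset \subseteq \Simple'}\) surjects onto \(A/W\).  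So your instinct about where the obstacle lies is exactly right, and the obstacle is not merely technical.
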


\begin{proof}
We mimic the proof of \cite{geck-pfeiffer:characters}*{Theorem 2.1.12}.

Suppose that \Simple and \(\Simple'\) are two systems of simple roots in \Root.  By \bourbaki*{Remarque VI.1.5.4}, there exists \(u \in W\) so that \(u\Simple = \Simple'\).  Thus, it suffices to show that, for any two subsets \(\subSimple, \othersubSimple \subseteq \Simple\), the intersection \(A_{\subSimple \subseteq \Simple} \cap \Int(u)A_{\othersubSimple \subseteq \Simple}\) is a parabolic subgroup of \(A\).  We may, and do, further assume that \(u\) is of minimal length (with respect to \Simple; see \bourbaki*{Th\'eor\`eme VI.1.5.2(vii) and D\'efinition IV.1.1.1}) in \(W_\subSimple u W_\othersubSimple\).

Now suppose that \(v \in A_{\subSimple \subseteq \Simple}\) and \(w \in A_{\othersubSimple \subseteq \Simple}\) satisfy \(v = \Int(u)w\).  Let \(v = v' \rtimes v_0\) and \(w = w' \rtimes w_0\) be the decompositions \(\eqref{eq:aut-sd}_\Simple\), so that \(v' \in W_\subSimple\), \(w' \in W_\othersubSimple\), and \(v_0\) and \(w_0\) lie in \(\Aut(\Root, \Simple)\) and stabilise \subSimple and \othersubSimple (setwise), respectively.  Then comparing the components of \(v u = u w\) in the decomposition \(\eqref{eq:aut-sd}_\Simple\) shows that
\begin{equation}
\tag{$*$}
\label{prop:parabolic-intersect:eq:almost-conj}
v'v_0 u v_0\inv = u w'
\end{equation}
and \(v_0 = w_0\).

Since \(v_0 = w_0\) stabilises \Simple, \subSimple, and \othersubSimple (setwise), it preserves lengths (with respect to \Simple) and normalises \(W_\subSimple\) and \(W_\othersubSimple\); so \(v_0 u v_0\inv\) is again of minimal length in \(W_\subSimple\dotm v_0 u v_0\inv\dotm W_\othersubSimple\).

By \cite{geck-pfeiffer:characters}*{Propositions 2.1.1 and 2.1.7},
we have that \(v_0 u v_0\inv = u\), hence, by \cite{geck-pfeiffer:characters}*{Theorem 2.1.12} and \eqref{prop:parabolic-intersect:eq:almost-conj}, that \(v' = \Int(u)w' \in W_\subSimple \cap \Int(u)W_\othersubSimple = W_{\subSimple \cap u\othersubSimple}\).  Finally, \(v_0 = \Int(u)w_0 \in \stab_{\Aut(\Root, \Simple)}(\subSimple \cap u\othersubSimple)\), so
\[
v = v' \rtimes v_0 \in W_{\subSimple \cap u\othersubSimple} \rtimes \stab_{\Aut(\Root, \Simple)}(\subSimple \cap u\othersubSimple) = A_{\subSimple \cap u\othersubSimple \subseteq \Simple}.\qedhere
\]
\end{proof}

\begin{cor}
\toplabel{cor:Borel}
Suppose that \(w \in A\).
\begin{enumerate}
\item\sublabel{Borel} There is a unique smallest parabolic subgroup \(P\) of \(A\) containing \(w\).
\item\sublabel{elliptic} If \Simple is a system of simple roots in \Root, and \subSimple is a subset of \Simple, such that \(P = A_{\subSimple \subseteq \Simple}\), then the restriction of \(w\) to \(\Root_\subSimple\) is an elliptic element of \(\Aut(\Root_\subSimple)\).
\end{enumerate}
\end{cor}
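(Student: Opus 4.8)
The plan is to dispatch the two assertions separately; the first is essentially formal, and the second carries all the weight. For \pref{cor:Borel:Borel}, note that the set of parabolic subgroups of \(A\) containing \(w\) is non-empty --- it contains \(A = A_{\Simple \subseteq \Simple}\) for any choice of \Simple\ --- and finite, since \(A\) is; and by Proposition \ref{prop:parabolic-intersect} and an obvious induction it is closed under intersection. Hence the intersection \(P\) of all its members is itself parabolic, contains \(w\), and is contained in every parabolic subgroup of \(A\) that contains \(w\); so it is the unique smallest one.

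For \pref{cor:Borel:elliptic}, write \(P = A_{\subSimple \subseteq \Simple}\). Since \(w \in A_{\subSimple \subseteq \Simple} = W_\subSimple \rtimes \stab_{\Aut(\Root, \Simple)}(\subSimple)\) and both factors preserve \(\R\subSimple\), hence the Levi subsystem \(\Root_\subSimple = \Root \cap \R\subSimple\), the element \(w\) restricts to an automorphism \(\bar w\) of \(\Root_\subSimple\). I would argue by contradiction: if \(\bar w\) is not elliptic in \(\Aut(\Root_\subSimple)\), then by Proposition \ref{prop:elliptic} applied to \(\Root_\subSimple\) there is a non-\(0\) vector \(\chi \in \R\Root_\subSimple = \R\subSimple\) with \(\bar w\chi = \chi\); and running the argument proving the `only if' direction of Proposition \ref{prop:elliptic} inside \(\Root_\subSimple\), with \(\bar w\) and \(\chi\), produces a system of simple roots \(\subSimple^*\) of \(\Root_\subSimple\) for which, on putting \(\othersubSimple = \set{\root \in \subSimple^*}{\pair\root\chi = 0}\), we have \(\othersubSimple \subsetneq \subSimple^*\) and \(\bar w \in \Aut(\Root_\subSimple)_{\othersubSimple \subseteq \subSimple^*}\) (that is, \(\bar w\) lies in the proper parabolic subgroup \(W(\Root_\subSimple)_\othersubSimple \rtimes \stab_{\Aut(\Root_\subSimple, \subSimple^*)}(\othersubSimple)\) of \(\Aut(\Root_\subSimple)\)). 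The goal is then to lift this to a \emph{proper} parabolic subgroup of \(A\) containing \(w\), contradicting the minimality of \(P\).

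The lifting is the delicate point, since --- as the remark following Definition \ref{defn:elliptic} shows --- the restriction map \(A_{\subSimple \subseteq \Simple} \to \Aut(\Root_\subSimple)\) is in general neither injective nor surjective, so one cannot pull back a parabolic subgroup of \(\Aut(\Root_\subSimple)\) naively. I would instead \emph{re-present} \(P\): because \(W(\Root_\subSimple)\) acts transitively on the systems of simple roots of \(\Root_\subSimple\) and \(W_\subSimple\) maps isomorphically onto it (Remark \ref{rem:parabolic}\pref{rem:parabolic:refls}), there is \(u \in W_\subSimple \subseteq P\) whose restriction carries \subSimple\ to \(\subSimple^*\); conjugating by \(u\) leaves \(P\) fixed while replacing \((\Simple, \subSimple)\) by \((u\Simple, \subSimple^*)\), with \(\Root_{\subSimple^*} = \Root_\subSimple\) unchanged, so I may assume outright that \(\subSimple^* = \subSimple\), i.e.\ that \(\othersubSimple = \set{\root \in \subSimple}{\pair\root\chi = 0} \subsetneq \subSimple \subseteq \Simple\) and \(\bar w \in \Aut(\Root_\subSimple)_{\othersubSimple \subseteq \subSimple}\). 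Comparing the decomposition \(w = w' \rtimes w_0\) of \(\eqref{eq:aut-sd}_\Simple\) with the analogous \(\rtimes\)-decomposition \(\bar w = \bar w' \rtimes \bar w_0\) for \(\Root_\subSimple\) of its restriction --- using uniqueness of these decompositions and the isomorphism \(W_\subSimple \cong W(\Root_\subSimple)\), which carries \(W_\othersubSimple\) onto \(W(\Root_\subSimple)_\othersubSimple\) --- then yields \(w' \in W_\othersubSimple\) and that \(w_0\) stabilises \othersubSimple, so \(w \in W_\othersubSimple \rtimes \stab_{\Aut(\Root, \Simple)}(\othersubSimple) = A_{\othersubSimple \subseteq \Simple}\), which is \emph{proper} since \(\othersubSimple \ne \Simple\) (Remark \ref{rem:parabolic}\pref{rem:parabolic:proper}).

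To conclude, I would invoke \pref{cor:Borel:Borel}: since \(P\) is the smallest parabolic subgroup containing \(w\), we have \(P \subseteq A_{\othersubSimple \subseteq \Simple}\), so by Remark \ref{rem:parabolic}\pref{rem:parabolic:refls}, \(\Root_\subSimple = \set{\root \in \Root}{\refl_\root \in P} \subseteq \set{\root \in \Root}{\refl_\root \in A_{\othersubSimple \subseteq \Simple}} = \Root_\othersubSimple\); as also \(\Root_\othersubSimple = \Root \cap \Z\othersubSimple \subseteq \Root \cap \Z\subSimple = \Root_\subSimple\), the Levi subsystems \(\Root_\subSimple\) and \(\Root_\othersubSimple\) coincide, whence their bases \subSimple\ and \othersubSimple\ have equal cardinality --- contradicting \(\othersubSimple \subsetneq \subSimple\). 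Hence \(\bar w\) is elliptic. The step I expect to require the most care in writing up is precisely the re-presentation maneuver: one must check that conjugating \(A_{\subSimple \subseteq \Simple}\) by \(u \in W_\subSimple\) leaves the subgroup unchanged yet has the stated effect on the presenting pair \((\Simple, \subSimple)\), so that the comparison of the two \(\rtimes\)-decompositions, and the matching up of bases it relies on, is legitimate.
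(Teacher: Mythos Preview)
Your proof is correct and follows essentially the same route as the paper's. Both arguments lift a parabolic subgroup of \(\Aut(\Root_\subSimple)\) containing \(\bar w\) to a parabolic subgroup \(A_{\othersubSimple \subseteq \Simple}\) of \(A\) containing \(w\), then use minimality of \(P\) together with Remark~\ref{rem:parabolic}\pref{rem:parabolic:refls} to force \(\Root_\subSimple = \Root_\othersubSimple\) and hence \(\card\subSimple = \card\othersubSimple\). The only differences are cosmetic: you phrase it as a contradiction and conjugate by \(u \in W_\subSimple\) at the outset so as to work in coordinates where \(\subSimple^* = \subSimple\), whereas the paper argues directly and carries the conjugating element \(v\) along explicitly, writing out the \(\eqref{eq:aut-sd}_{\subSimple'}\)-decomposition of \(\bar w\) in terms of \(v\) and the \(\eqref{eq:aut-sd}_\Simple\)-decomposition of \(w\); your re-presentation maneuver is exactly the paper's conjugation by \(v\), done in advance rather than tracked through the computation.
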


\begin{proof}
\subpref{Borel} is an immediate consequence of Proposition \ref{prop:parabolic-intersect} (and the fact that \(A\) is finite).  To show \subpref{elliptic}, adopt the notation of the statement, and suppose that \(\subSimple'\) is a system of simple roots in \(\Root_\subSimple\), and \(\othersubSimple'\) a subset of \(\subSimple'\), such that the restriction \ol w of \(w\) to \(\Root_\subSimple\) lies in \(\Aut(\Root_\subSimple)_{\othersubSimple' \subseteq \subSimple'}\).  By \bourbaki*{Th\'eor\`eme IV.1.8.2(i)} and Remark \hiref{rem:parabolic}{refls}, there is \(v \in W_\subSimple\) such that \(v\subSimple = \subSimple'\).  Put \(\othersubSimple = v\inv\othersubSimple'\).

Let \(w = w' \rtimes w_0\) be the decomposition \(\eqref{eq:aut-sd}_\Simple\).  Then, with the obvious notation, \(\ol w = \ol w'\ol w_0\ol v\ol w_0\inv\ol v\inv \rtimes \ol v\ol w_0\ol v\inv\) is the decomposition \(\eqref{eq:aut-sd}_{\subSimple'}\).
\begin{itemize}
\item \(\ol w'\ol w_0\ol v\ol w_0\inv\ol v\inv \in W(\Root_\subSimple)_{\othersubSimple'}\), so \(\ol v\inv\ol w'\ol w_0\ol v\ol w_0\inv \in W(\Root_\subSimple)_\othersubSimple\).  Since \(W_\othersubSimple \subseteq W_\subSimple\) surjects onto \(W(\Root_\subSimple)_\othersubSimple\), we have by Remark \hiref{rem:parabolic}{refls} again that \(W_\othersubSimple\) is the full preimage of \(W(\Root_\subSimple)_\othersubSimple\) in \(W_\subSimple\); in particular, that \(v\inv w'w_0 v w_0\inv \in W_\othersubSimple\).
\item \(\ol v\ol w_0\ol v\inv\), hence also \(v w_0 v\inv\), stabilises \(\othersubSimple'\), so \(w_0\) stabilises \othersubSimple.
\end{itemize}
It follows that \(v\inv w v \in A_{\othersubSimple \subseteq \Simple}\), hence that \(w \in A_{\othersubSimple' \subseteq v\Simple}\).  By construction, \(A_{\subSimple \subseteq \Simple} \subseteq A_{\othersubSimple' \subseteq v\Simple}\).  By Remark \hiref{rem:parabolic}{refls} once more, we have that \(\Root_\subSimple \subseteq \Root_{\othersubSimple'}\).  The reverse containment being obvious, we have equality.  Since \(\dim \Root_\subSimple = \card\subSimple = \card{\subSimple'}\) and \(\dim \Root_{\othersubSimple'} = \card{\othersubSimple'}\), we have that \(\othersubSimple' = \subSimple'\), hence that \(\Aut(\Root_\subSimple)_{\othersubSimple' \subseteq \subSimple'} = \Aut(\Root_\subSimple)\).
\end{proof}

\begin{rem}
\label{rem:Borel}
With the notation of Corollary \ref{cor:Borel}, we have by Remark \hiref{rem:parabolic}{refls} that \(N_A(\sgen w) \subseteq N_A(P) \subseteq \stab_A(\Root_\subSimple)\).
\end{rem}

\section{Sign changes in Weyl groups}
\label{sec:signs}

\setcounter{equation}0

For the remainder of this paper, let \FF be a separably closed field.  For this section, let \(G\) be a connected, reductive, \FF-group, and \(T\) a maximal torus in \(G\).  Write \mnotn{\Lie(G)} and \(\Lie(T)\) for the Lie algebras of \(G\) and \(T\), respectively.  Put \(\Root = \mnotn{\Root(G, T)}\) \springer*{\S7.4.3, p.~125} and \(\mnotn{W(G, T)} = N_G(T)/T\), and write \(W = W(\Root)\) and \(A = \Aut(\Root)\).  Recall that \(W(G, T) \cong W\) \springer*{\S7.4.1}.  Write \(\Lie(G)_\root\) for the root subspace of \(\Lie(G)\) associated to a root \(\root \in \Root\), \ie, for the space on which \(T\) acts by \root.

\begin{defn}
\label{defn:auto}
An \term{algebraic automorphism} of \((G, T)\) is an algebraic automorphism of \(G\) that preserves \(T\).  A \term{twisted automorphism} of \((G, T)\) is the composition of an algebraic automorphism of \((G, T)\) with the action of some element of \(\Gal(\FF/\ff)\), where \(\FF/\ff\) is a Galois extension and both \(T\) and \(G\) are defined over \ff.
\end{defn}

Note that the group of algebraic automorphisms is a normal subgroup of the group of twisted automorphisms.

\subsection{The Tits group}
\label{sec:tits}

Since the definition of \(\Root_\theta\) is sensitive to the choice of \(\theta\), not just the automorphism \(w \ldef \ol\theta\) by which it acts on \Root, we might wonder if there is a canonical choice of \(\theta\), given \(w\).  This is not quite the case in general, but there is a next best thing.  Namely, Tits defines an `extended Weyl group' \(\dot W = \dot W(\Root)\) associated to \Root \tits*{D\'efinition 2.2}, which comes equipped with a surjection \anonmap{\dot W}W with kernel an elementary Abelian \(2\)-group \tits*{Th\'eor\`eme 2.5}, and shows that, if \(G\) is simply connected, then the choice of a system \Simple of simple roots and `realisation' of \Root in \(G\) (in the sense of \springer*{\S8.1.4}) furnishes an embedding of \(\dot W(\Root)\) in \(N_G(T)\) over \(W \cong W(G, T)\) \tits*{Th\'eor\`eme 4.4} (see also the definition of the category \TitsNpp in \tits*{\S3.2}).  In general, by factoring through the simply connected cover of the derived group of \(G\), the same data furnish an embedding of a quotient, which we call \mnotn{\dot W(G, T)} (and which is called \ol W in \springer*{Exercise 9.3.4(2)}), of \(\dot W(\Root)\) in \(N_G(T)\) over \(W\).  Note that the kernel of \anonmap{\dot W(G, T)}W is a quotient of the kernel of \anonmap{\dot W}W, hence still an elementary Abelian \(2\)-group.  Although we do not have a canonical embedding of \(\dot W(G, T)\) in \(G\), the various choices involved affect it only up to conjugacy in \(N_G(T)\).

There is quite a lot that can be said about the group \(\dot W(G, T)\), but all that we need to know is that it consists of elements that act by `semi-pinned' automorphisms, in the sense of the next definition.

\begin{defn}
\label{defn:semi-pinned}
An algebraic automorphism \(\theta\) of \((G, T)\) is called \term{semi-pinned} if, whenever \(\root \in \Root\) and \(e \in \Z\) are such that \(\theta^e\root = \root\), then \(\theta^e\) acts on \(\Lie(G)_\root\) by multiplication by \(+1\) or \(-1\).
\end{defn}

\begin{rem}
If \(\theta\) is semi-pinned and \(e \in \Z\) is such that \(\ol\theta^e = 1\), then \(\theta^{2e} = 1\).  In particular, a semi-pinned automorphism automatically has finite order.
\end{rem}

\begin{lem}
\label{lem:W+-}
Every element of \(\dot W(G, T)\) induces a semi-pinned automorphism of \(G\) (by conjugation).
\end{lem}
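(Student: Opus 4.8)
The plan is to prove the slightly stronger assertion that, once a Chevalley system adapted to the realisation used to define \(\dot W(G, T)\) has been fixed (so that each one-dimensional root subspace \(\Lie(G)_\root\) carries a distinguished basis vector \(e_\root\)), conjugation by any \(\dot w \in \dot W(G, T)\) carries \(e_\root\) to \(\pm e_{\ol{\dot w}\root}\) for every \(\root \in \Root\).  Granting this, the semi-pinned condition of Definition \ref{defn:semi-pinned} is immediate: if \(\root \in \Root\) and \(e \in \Z\) satisfy \(\ol{\dot w}^e\root = \root\), then, applying the assertion to \(\dot w^e \in \dot W(G, T)\), conjugation by \(\dot w^e\) carries \(e_\root\) to \(\pm e_{\ol{\dot w}^e\root} = \pm e_\root\); since \(\Lie(G)_\root\) is one-dimensional, this says exactly that \(\dot w^e\) acts on it by \(+1\) or \(-1\).

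First I would reduce to the case that \(G\) is semisimple and simply connected.  Let \map\pi{G_{\mathrm{sc}}}{G_{\mathrm{der}}} be the simply connected cover of the derived group of \(G\), followed by the inclusion into \(G\), and put \(T_{\mathrm{sc}} = \pi\inv(T \cap G_{\mathrm{der}})\).  Then \(\pi\) restricts to a central isogeny \((G_{\mathrm{sc}}, T_{\mathrm{sc}}) \to (G_{\mathrm{der}}, T \cap G_{\mathrm{der}})\), which induces an identification \(\Root(G_{\mathrm{sc}}, T_{\mathrm{sc}}) = \Root\) and, through its differential, isomorphisms \(\Lie(G_{\mathrm{sc}})_\root \cong \Lie(G)_\root\) for every \(\root \in \Root\); moreover these isomorphisms intertwine, via \(\pi\), the conjugation actions of \(N_{G_{\mathrm{sc}}}(T_{\mathrm{sc}})\) and of \(N_G(T)\).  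By the construction of \(\dot W(G, T)\) recalled in \S\ref{sec:tits}, it is the image under \anonmap{N_{G_{\mathrm{sc}}}(T_{\mathrm{sc}})}{N_G(T)} of the copy of \(\dot W(\Root)\) that Tits embeds in \(N_{G_{\mathrm{sc}}}(T_{\mathrm{sc}})\); so it suffices to prove the strengthened assertion for that copy of \(\dot W(\Root)\) inside \(N_{G_{\mathrm{sc}}}(T_{\mathrm{sc}})\).

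Now \(\dot W(\Root)\) is generated, as a group, by the Tits generators \(\dot s_\root\) with \(\root\) ranging over a system \Simple of simple roots (\tits*{Th\'eor\`eme 2.5}), and the chosen realisation sends each \(\dot s_\root\) to the canonical representative \(n_\root \in N_{G_{\mathrm{sc}}}(T_{\mathrm{sc}})\) of \(\refl_\root\) attached to \(\root\) by the pinning (\tits*{Th\'eor\`eme 4.4}; see also \springer*{\S8.1.4 and \S9.3}).  For these, the Chevalley relations give \(n_\root u_\otherroot(t) n_\root\inv = u_{\refl_\root\otherroot}(\pm t)\) for every \(\otherroot \in \Root\) and every scalar \(t\), where \(u_\otherroot\) parametrises the root subgroup \(U_\otherroot\) as furnished by the realisation and the sign depends only on the ordered pair \((\root, \otherroot)\); differentiating at \(t = 0\) shows that conjugation by \(n_\root\) carries \(e_\otherroot\) to \(\pm e_{\refl_\root\otherroot}\).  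The property ``conjugation carries \(e_\otherroot\) to \(\pm e_{w\otherroot}\) for all \(\otherroot \in \Root\), where \(w\) is the induced automorphism of \Root'' is plainly stable under composition, so it passes from the \(n_\root\) to the whole group they generate, that is, to all of \(\dot W(\Root) \subseteq N_{G_{\mathrm{sc}}}(T_{\mathrm{sc}})\).  This proves the strengthened assertion, and with it the lemma.

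I expect no real obstacle beyond bookkeeping: the substance is entirely in Tits' and Chevalley's explicit formulas for the \(n_\root\) and their conjugation action on root subgroups, together with the routine compatibility of root subspaces under the central isogeny \(\pi\).  The one point that deserves care is that the realisations used to define \(\dot W(G, T)\) be of the kind for which the constant in \(n_\root u_\otherroot(t) n_\root\inv = u_{\refl_\root\otherroot}(\pm t)\) genuinely is \(\pm 1\); but this normalisation is precisely what Tits builds into his construction, so it comes for free.
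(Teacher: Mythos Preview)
Your argument is correct, and it proves a statement slightly stronger than what is needed: that conjugation by any element of \(\dot W(G,T)\) permutes the set \(\{\pm e_\root : \root \in \Root\}\).  The reduction to the simply connected cover is exactly the right move, and the Chevalley relation \(n_\root u_\otherroot(t)n_\root^{-1} = u_{\refl_\root\otherroot}(c_{\root,\otherroot}t)\) with \(c_{\root,\otherroot}\in\{\pm1\}\) is standard for a Chevalley pinning; your closing remark that Tits' construction is normalised precisely so that these constants are \(\pm1\) is the only point of substance, and you have identified it correctly.

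The paper proceeds differently.  Rather than tracking the action of \(\dot W(G,T)\) on all root vectors at once via generators, it fixes a single \(\dot w\) with \(\ol{\dot w}\root=\root\), conjugates \(\root\) to a simple root \(v\root\) by some \(v\in W\), and then invokes two structural facts from Springer's book: first, that Springer's canonical section \(\phi\) sends the stabiliser of \(v\root\) in \(W\) to elements fixing \(\Lie(G)_{v\root}\) pointwise (\springer*{Proposition 9.3.5}); second, that any two lifts to \(\dot W(G,T)\) of the same Weyl element differ by a \(2\)-torsion element of \(T\) (\springer*{Exercise 9.3.4(2)}).  Combining these gives that \(\dot v\dot w\dot v^{-1}\), hence \(\dot w\), acts by a sign.

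Your approach is the textbook one for Chevalley groups and has the advantage of yielding the global statement about all root vectors in one stroke; the paper's approach is more local, never needing to name a Chevalley basis beyond the simple root involved, and leaning instead on the \(2\)-torsion description of \(\ker(\dot W(G,T)\to W)\).  Either is entirely adequate here.
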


\begin{proof}
Since \(\dot W(G, T)\) is a group, it suffices to show that, if \(\dot w \in \dot W(G, T)\) fixes \(\root \in \Root\), then \(\dot w\) acts on \(\Lie(G)_\root\) by multiplication by \(+1\) or \(-1\).

Remember that the construction of \(\dot W(G, T)\) involves a choice of a system \Simple of simple roots for \Root.  By \bourbaki*{Proposition VI.1.5.15}, there is \(v \in W\) so that \(v\root \in \Simple\).  By \springer*{Proposition 9.3.5}, in the notation of \springer*{\S9.3.3}, the element \(\dot u \ldef \phi(v w v\inv) \in \dot W(G, T)\) fixes \(\Lie(G)_{v\root}\) (pointwise); and, by \springer*{Exercise 9.3.4(2)}, if \(\dot v\) is any lift of \(v\) to \(\dot W(G, T)\), then \(t = \dot u\inv\dot v\dot w\dot v\inv\) is a \(2\)-torsion element of \(T\).  Thus \(t\), hence also \(\dot v\dot w\dot v\inv = \dot u t\), acts on \(\Lie(G)_{v\root}\) by multiplication by the scalar \(\alpha(t) \in \sset{\pm1}\); so \(\dot w\) acts on \(\Lie(G)\) by the same scalar.
\end{proof}

Thus, the extended Weyl group provides a source of semi-pinned automorphisms lifting all elements of the Weyl group.  We would like to be able to lift arbitrary automorphisms of the root system, whether or not they lie in the Weyl group, and we can nearly always do so.

\begin{prop}
\label{prop:A+-}
Suppose that \(w \in A\), and
\begin{itemize}
\item \(w \in W\),
or
\item \(G\) is simple, adjoint, or simply connected.
\end{itemize}
Then there is a semi-pinned automorphism \(\theta\) of \((G, T)\) such that \(\ol\theta = w\).
\end{prop}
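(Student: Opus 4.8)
The plan is to split into the two cases of the hypothesis. If $w \in W$, then Lemma~\ref{lem:W+-} already does the job: the extended Weyl group $\dot W(G, T)$ embeds in $N_G(T)$ over $W$, so any lift $\dot w$ of $w$ to $\dot W(G, T)$ induces, by conjugation, an algebraic automorphism $\theta$ of $(G, T)$ with $\ol\theta = w$, and that automorphism is semi-pinned by the lemma. So the real content is the case $w \notin W$, when we must also use the hypothesis that $G$ is simple, adjoint, or simply connected.

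For that case, write the decomposition $\eqref{eq:aut-sd}_\Simple$ as $w = w' \rtimes w_0$ with $w' \in W$ and $w_0 \in \Aut(\Root, \Simple)$. Since $w_0$ stabilises $\Simple$, it corresponds to a diagram automorphism, and the idea is to realise $w_0$ by a \emph{pinned} automorphism $\theta_0$ of $(G, T)$ — one that fixes $T$, permutes a chosen pinning (\ie, the root subgroups $U_\root$ and their chosen isomorphisms with $\Gm$, $\root \in \Simple$) according to $w_0$, and hence acts on each $\Lie(G)_\root$, $\root \in \Simple$, by $+1$. Existence of such a $\theta_0$ is where the hypothesis enters: for $G$ adjoint or simply connected, the pinned automorphisms realise all of $\Aut(\Root, \Simple)$ (this is the standard existence theorem for automorphisms preserving a pinning, \springer*{\S9.6 and Exercise 9.6.1}, or \springer*{\S11}), and for $G$ simple one argues either directly or by passing to the adjoint or simply connected cover and checking the automorphism descends. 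A pinned automorphism is in particular semi-pinned: if $\theta_0^e$ fixes $\root \in \Root$, conjugate $\root$ into $\Simple$ by an element of $W$ (which can itself be lifted compatibly, as in Lemma~\ref{lem:W+-}), reducing to the case $\root \in \Simple$, where $\theta_0^e$ acts on $\Lie(G)_\root$ by $+1$.

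Now combine: let $\dot w' \in \dot W(G, T)$ be a lift of $w'$, inducing a semi-pinned automorphism $\theta'$ by Lemma~\ref{lem:W+-}, and set $\theta = \theta' \circ \theta_0$. Then $\ol\theta = w' \rtimes w_0 = w$ as desired. It remains to check that $\theta$ is semi-pinned. This is the step I expect to be the main obstacle, since semi-pinnedness is not obviously closed under composition: if $\theta^e$ fixes $\root \in \Root$, we need that $\theta^e$ acts on $\Lie(G)_\root$ by $\pm1$. The clean way is to observe that $\theta$ normalises the coset $T \cdot \dot W(G, T)$ inside $N_G(T)$ (because $\theta_0$ normalises $\dot W(G, T)$, as a pinned automorphism permutes the Tits lifts of simple reflections, and $\theta'$ is conjugation by an element of $\dot W(G, T)$), so any power $\theta^e$ that fixes $\root$ is conjugation by an element $t\dot u$ with $t \in T$ and $\dot u \in \dot W(G, T)$ fixing $\root$; by Lemma~\ref{lem:W+-}, $\dot u$ acts on $\Lie(G)_\root$ by $\pm1$, and $t$ acts by $\root(t) \in \Gm$. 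The point forcing $\root(t) = \pm1$ is the remark following Definition~\ref{defn:semi-pinned}: since $\theta^e$ fixes $\root$, replacing $e$ by $2e$ if necessary makes $\ol{\theta}^{\,e} = 1$, so $\theta^{2e} = \mathrm{id}$ on the root datum side forces the scalar to be a root of unity of order dividing $2$; more carefully, $(t\dot u)^2 \in T$ acts on $\Lie(G)_\root$ by $\root(t)^2 \cdot (\pm1)^2 = \root(t)^2$, and iterating, some power lands in $T$ and acts trivially on the relevant root spaces precisely because $\dot W(G,T)/(\text{2-torsion in }T)$ has the structure from \tits*{Th\'eor\`eme 2.5}, which pins down $\root(t)^{2^N} = 1$, whence $\root(t) \in \sset{\pm1}$. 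Thus $\theta^e$ acts by $\pm1$ on $\Lie(G)_\root$, and $\theta$ is semi-pinned. (If one prefers to avoid the coset bookkeeping, an alternative is to check directly on $\Simple$ using that $\theta_0$ preserves the pinning and $\theta'$ comes from $\dot W(G,T)$, then propagate to all of $\Root$ via $W$-conjugacy as above; this is more elementary but the verification that cross-terms between $\theta_0$ and $\theta'$ contribute only signs is the same computation in disguise.)
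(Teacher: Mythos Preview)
Your overall strategy---split off the case $w\in W$ via Lemma~\ref{lem:W+-}, then for $w\notin W$ write $w=w'\rtimes w_0$, lift $w_0$ to a pinned automorphism $\theta_0$ and $w'$ to $\theta'=\Int(\dot w')$ with $\dot w'\in\dot W(G,T)$, and set $\theta=\theta'\theta_0$---is sound, and different from the paper's.  The paper instead, for simple $G$, realises $\Root$ as a Levi subsystem of a larger root system $\wtilde\Root$ (case-by-case: $\An\subset\Dn$, $\Dn\subset\Dn[n+1]$, $\Dn[4]\subset\En[6]$, $\En[6]\subset\En[7]$) so that every outer automorphism of $\Root$ is induced by an element of $W(\wtilde\Root)$; then Lemma~\ref{lem:W+-}, applied in the larger group, produces the required semi-pinned lift directly, with no composition to analyse.

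The gap in your argument is the verification that $\theta=\theta'\theta_0$ is semi-pinned.  Two of the steps do not work as written.  First, your reduction ``conjugate $\root$ into $\Simple$ by an element of $W$'' to show $\theta_0$ semi-pinned does not do what you want: if $v\root\in\Simple$, then the scalar by which $\theta_0^e$ acts on $\Lie(G)_\root$ equals the scalar by which the \emph{conjugate} $\dot v\,\theta_0^e\,\dot v^{-1}$ acts on $\Lie(G)_{v\root}$, but that conjugate is no longer the pinned automorphism, so you cannot invoke the pinning.  Second, and more seriously, the claim that ``any power $\theta^e$ that fixes $\root$ is conjugation by an element $t\dot u$'' is false whenever $w_0^e\ne1$: then $\theta^e$ still involves the outer piece $\theta_0^e$ and is not inner at all, so the subsequent argument about $\root(t)$ never gets off the ground.  (Even granting the setup, ``$\root(t)^{2^N}=1$ hence $\root(t)\in\sset{\pm1}$'' is a non-sequitur.)

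Your approach can be repaired, and the fix is short once seen: both $\theta_0$ (being pinned) and every element of $\dot W(G,T)$ (acting by conjugation) preserve a Chevalley $\Z$-form of $\Lie(G)$, i.e.\ they carry each Chevalley basis vector $X_\root$ to $\pm X_{\text{(image of }\root)}$.  Hence so does every element of the group they generate; in particular $\theta^e$, if it fixes $\root$, sends $X_\root$ to $\pm X_\root$.  This replaces the faulty coset bookkeeping and makes your route a genuine alternative to the paper's embedding argument.
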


\begin{rem}
For our purposes, the conditions of the proposition are not as restrictive as they seem; for, if \(G\) is arbitrary, then we may simply pass to its adjoint quotient, which does not affect its root system \Root.
\end{rem}

\begin{proof}
We have already handled the case where \(w \in W\) in Lemma \ref{lem:W+-}.

Suppose that we have proven the result for all simple groups.  Note that we may write \(w = w'w_0\), where, for each irreducible component \(\Root'\) of \Root,
\begin{itemize}
\item the element \(w_0\) stabilises \(\Root'\) (setwise),
and
\item the element \(w'\) fixes \(\Root'\) (pointwise), or carries it to a different irreducible component of \Root.
\end{itemize}
Now suppose that \(G\) is adjoint or simply connected, so that it is the product of its simple subgroups, all of which are adjoint, or all of which are simply connected.  (See \springer*{Theorem 8.1.5}.)  If \(G'\) and \(G''\) are simple subgroups of \(G\), corresponding to irreducible components \(\Root'\) and \(\Root''\) of \Root such that \(\Root'' = w'\Root'\), then \(G'\) and \(G''\) are isomorphic \springer*{Theorem 9.6.2}.  We piece together all such isomorphisms to obtain an automorphism \(\theta'\) of \(G\).  Next we piece together, for each simple subgroup \(G'\) of \(G\), a semi-pinned lifting of \(w_0\) to an automorphism of \(G'\), and call the result \(\theta_0\).  Then \(\theta = \theta'\theta_0\) is a semi-pinned automorphism of \(G\) such that \(\ol\theta = w\).

Thus, it suffices to handle the case where \(G\) is simple.  By Lemma \ref{lem:W+-}, it suffices to show that we may find
\begin{itemize}
\item a connected, reductive \FF-group \wtilde J,
\item a maximal torus \wtilde T in \wtilde J,
\item a subgroup \wtilde G of \wtilde J containing \wtilde T,
and
\item a central subgroup \wtilde Z of the derived subgroup \(\mc D\wtilde G\) of \wtilde G,
\end{itemize}
such that
\begin{itemize}
\item there is an isomorphism \(\mc D\wtilde G/\wtilde Z \cong G\) that restricts to an isomorphism \(\mc D\wtilde G \cap \wtilde T/\wtilde Z \cong T\), so that \Root may be naturally identified with the subset \(\Root(\mc D\wtilde G, \mc D\wtilde G \cap \wtilde T)\) of \(\Root(\wtilde J, \wtilde T)\),
and
\item \(\stab_{W(\wtilde J, \wtilde T)}(\Root)\) contains a set of representatives for \(A/W\).
\end{itemize}

Fortunately, there are only a few cases where \(A \ne W\).  In each case, we find a root system \wtilde\Root of which \Root is a Levi subsystem (in the sense of Definition \ref{defn:elliptic}),
and take \wtilde J to be the derived group of the simply connected group with root system \wtilde\Root, and \wtilde G to be an appropriate Levi subgroup.
We will label root systems as in \bourbaki*{Planches VI.IV--VI}, and use the notation \(L_I\) of \springer*{\S15.4, p.~264} for Levi components of `standard' parabolic subgroups.

If \Root is of type \An with \(n > 2\), then we take \(\wtilde\Root = \Dn\) and \(L = L_{\sset{\root_1, \dotsc, \root_{n - 1}}}\).  The element
\[
\prod_{i = 1}^{\rup{n/2} - 1} \refl_{\root_1 + \dotsb + \root_{n - i - 1} + 2\root_{n - i} + \dotsb + 2\root_{n - 2} + \root_{n - 1} + \root_n} 
\in N_{W(\Dn)}(W)
\]
acts as an outer automorphism of \Root.

If \Root is of type \Dn with \(n > 4\), then we take \(\wtilde\Root = \Dn[n + 1]\) and \(L = L_{\sset{\root_2, \dotsc, \root_{n + 1}}}\).  The element
\[
\refl_{\root_1}\refl_{\root_1 + 2\root_2 + \dotsb + 2\root_{n - 1} + \root_n + \root_{n + 1}} 
\in N_{W(\Dn[n + 1])}(W)
\]
acts as an outer automorphism of \Root.

If \Root is of type \Dn[4], then we take \(\wtilde\Root = \En[6]\) and \(L = L_{\sset{\root_2, \dotsc, \root_5}}\).  The elements
\[
\refl_{\Eroot6 0 1 0 1 1 1}
\refl_{\Eroot6 0 0 1 1 1 1}, 
\refl_{\Eroot6 1 1 1 1 0 0}\refl_{\Eroot6 1 0 1 1 1 0} \in N_{W(\En[6])}(W)
\]
act as outer automorphisms of \Root that are not congruent modulo inner automorphisms.

If \Root is of type \En[6], then we take \(\wtilde\Root = \En[7]\) and \(L = L_{\sset{\root_1, \dotsc, \root_6}}\).  The element
\[
\refl_{\Eroot7 0 1 1 2 2 2 1}
\refl_{\Eroot7 1 1 2 2 1 1 1}\refl_{\Eroot7 1 1 1 2 2 1 1} \in N_{W(\En[7])}(W)
\]
acts as an outer automorphism of \Root.
\end{proof}

\subsection{A set of roots attached to an algebraic automorphism}
\label{sec:R_theta}

We continue with the notation established at the beginning of \S\ref{sec:signs}.  Until \S\ref{sec:root-sign}, let \(\theta\) be an algebraic automorphism of \((G, T)\) (Definition \ref{defn:auto}).

Loosely speaking, one may motivate the next definition by thinking of the case in which \(C_G(T^\theta) = T\).  Then restriction defines a bijection of \(\sgen\theta\bslash\Root_\theta\) with \(\Root((G^\theta)\conn, (T^\theta)\conn)\).

\begin{defn}
\label{defn:R_theta}
Put
\[
\mnotn{\Root_\theta} = \sett{\root \in \Root}{\(\theta^e\) fixes \(\Lie(G)_\root\) pointwise whenever \(\theta^e\root = \root\)}.
\]
\end{defn}

It is easy to construct examples where \(\Root_\theta\) depends on \(\theta\), not just on \ol\theta; but the next two results show that this cannot happen if \ol\theta is elliptic.

\begin{lem}
\label{lem:elliptic-cohom}
If \(w \in \Aut(\Root)\) is elliptic and \(G\) is semisimple, then \(T^w\) is finite and \(T = (1 - w)T\).
\end{lem}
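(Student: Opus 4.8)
The plan is to translate the statement into elementary linear algebra on the cocharacter lattice of $T$, using Proposition \ref{prop:elliptic} to exploit ellipticity and the semisimplicity of $G$ to identify the ambient vector space with $\R\Root$.

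First I would pin down how $w$ acts on $T$. Since $G$ is semisimple, $w$ lifts to an algebraic automorphism of $(G, T)$ (for instance the ambient $\theta$, in the notation of the surrounding text), and thereby acts on $T$ and on the character and cocharacter lattices $X^*(T)$, $X_*(T)$; the action on the lattices is independent of the lift, since two lifts differ by an automorphism trivial on $\Root$, hence trivial on $X^*(T)$ because the roots span $X^*(T) \otimes \Q$ for a semisimple group, hence trivial on $T$. Semisimplicity also gives that $\R\Root$ is all of $X^*(T) \otimes \R$, so $X_*(T) \otimes \R$ is its dual and $w$ acts on $X_*(T) \otimes \R$ through its action on $\R\Root$.

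Now the substance. By Proposition \ref{prop:elliptic}, $w$ being elliptic means it has no non-zero fixed vector in $\R\Root$; since a finite-order linear map and its transpose have fixed subspaces of equal dimension, $w$ then has no non-zero fixed vector in $X_*(T) \otimes \R$ either, equivalently $\det(1 - w) \ne 0$. Two conclusions follow. For finiteness of $T^w$: the identity component of $T^w$ is a subtorus $S$ of $T$ fixed pointwise by $w$, so $w$ acts trivially on $X_*(S)$, so $X_*(S) \otimes \R$ is a $w$-fixed subspace of $X_*(T) \otimes \R$, hence $0$, hence $S$ is trivial and $T^w$ is finite. For surjectivity: the assignment $t \mapsto t\dotm w(t)\inv$ is an endomorphism of the torus $T$ (as $T$ is commutative), whose image is a subtorus of dimension $\dim T - \dim T^w = \dim T$; a subtorus of $T$ of full dimension equals $T$, so $T = (1 - w)T$. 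On $\FF$-points nothing changes: $T^w$ is finite already as a group scheme, and $1 - w\colon T \to T$, being a surjective isogeny of split tori over the separably closed field $\FF$, is surjective on $\FF$-points.

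I expect the one delicate point to be that last sentence: surjectivity of $1 - w$ on $\FF$-points requires the isogeny to be separable, i.e. $\operatorname{char}\FF \nmid \det(1 - w)$, which holds automatically when $\FF$ is algebraically closed — in particular for the residue fields arising in the $p$-adic applications — but not for a completely arbitrary separably closed field; reading $T = (1 - w)T$ as an identity of algebraic groups, as done above, sidesteps the issue, and is presumably what is intended. Everything else is a formal consequence of Proposition \ref{prop:elliptic} together with the fact that the roots span for a semisimple group, so I would not anticipate any real obstruction beyond this bookkeeping about which ring one works over.
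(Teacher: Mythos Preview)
Your proposal is correct and follows essentially the same route as the paper: both invoke Proposition \ref{prop:elliptic} together with semisimplicity to get that $w$ has no fixed vectors on $\bX_*(T)\otimes\R$, conclude that $T^w$ has no nontrivial subtorus and is therefore finite, and then use a dimension count on the homomorphism $t\mapsto t\cdot w(t)^{-1}$ to deduce $(1-w)T=T$. Your additional paragraph on $\FF$-points is a useful gloss---the paper reads $T=(1-w)T$ as an identity of algebraic groups, exactly as you surmise, and only later (in the proof of Corollary \ref{cor:elliptic-cohom}) passes to $\FF$-points of the adjoint torus.
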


\begin{proof}
By Proposition \ref{prop:elliptic}, the element \(w\) has no non-\(0\) fixed vectors in \(\R\Root = \bX^*(T) \otimes_\Z \R\), hence also none in the dual space \(\bX_*(T) \otimes_\Z \R\); so
\[
\bX_*(T^w) \otimes_\Z \R = \bX_*(T)^w \otimes_\Z \R \cong \bigl(\bX_*(T) \otimes_\Z \R\bigr)^w = 0,
\]
whence \(T^w\) contains no non-trivial subtori.  By \springer*{Corollary 3.2.7(ii)}, this means that \(T^w\) is finite, hence that \(\dim(T^w) = 0\).  Since \map{1 - w}{T/T^w}{(1 - w)T} is an isomorphism, this means that \(\dim T = \dim (1 - w)T\).  Since \(T\) is connected, it follows that \(T = (1 - w)T\).
\end{proof}

\begin{cor}
\label{cor:elliptic-cohom}
If \ol\theta is elliptic, and \(\theta'\) is an algebraic automorphism of \((G, T)\) such that \(\ol\theta' = \ol\theta\), then \(\theta' = \Int(s)\theta\Int(s)\inv\) for some \(s \in T(\FF)\).
\end{cor}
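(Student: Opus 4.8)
The plan is to reduce the assertion to the surjectivity, on \(\FF\)-points, of the isogeny ``\(1 - w\)'' of \(T\) supplied by Lemma \ref{lem:elliptic-cohom}, where \(w \ldef \ol\theta = \ol{\theta'} \in \Aut(\Root)\). The key input is the elementary identity \(\Int(s)\circ\theta\circ\Int(s)\inv = \Int\bigl(s\dotm\theta(s)\inv\bigr)\circ\theta\) for \(s \in T(\FF)\), which holds because \(\theta\circ\Int(s)\inv\circ\theta\inv = \Int(\theta(s)\inv)\) and \(\bigl(s\dotm\theta(s)\inv\bigr)\inv = \theta(s)\dotm s\inv\). Hence \(\theta' = \Int(s)\theta\Int(s)\inv\) for some \(s \in T(\FF)\) if and only if the algebraic automorphism \(\phi \ldef \theta'\circ\theta\inv\) of \((G, T)\) equals \(\Int\bigl(s\dotm\theta(s)\inv\bigr)\) for some \(s \in T(\FF)\).

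First I would check that \(\phi\) is conjugation by an element of \(T(\FF)\). Since \(\theta \mapsto \ol\theta\) is a homomorphism into \(\Aut(\Root)\), we have \(\ol\phi = 1\), so \(\phi\) fixes every root. Because \(\Z\Root\) has finite index in the torsion-free lattice \(\bX^*(T)\) — here we use that \(G\) is semisimple, as in Lemma \ref{lem:elliptic-cohom}; for general reductive \(G\) the essential case is the semisimple one — the automorphism \(\phi\) must then act trivially on all of \(\bX^*(T)\), hence on \(T\), and in particular it preserves the Borel attached to a system \Simple of simple roots together with each simple root subgroup. Fixing a pinning of \(G\) relative to \(T\) and \Simple, \(\phi\) scales the root subgroup of \(\root \in \Simple\) by some unit \(c_\root \in \FF^\times\); choosing \(t \in T(\FF)\) with \(\root(t) = c_\root\) for all \(\root \in \Simple\) (possible since \(\FF\) is separably closed), the automorphism \(\Int(t)\inv\circ\phi\) fixes the whole pinning and is therefore trivial, so \(\phi = \Int(t)\).

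It then suffices to solve \(s\dotm\theta(s)\inv = t\) for \(s \in T(\FF)\). As \(\theta\) restricted to \(T\) depends only on \(w\), the map \(s \mapsto s\dotm\theta(s)\inv\) is exactly the endomorphism of \(T\) written multiplicatively as \(1 - w\). Since \(w\) is elliptic and \(G\) is semisimple, Lemma \ref{lem:elliptic-cohom} tells us this endomorphism has finite kernel and image all of \(T\); being an isogeny, it is surjective on \(\FF\)-points because \(\FF\) is separably closed. This produces the desired \(s\), and with it the element of the statement.

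The step I expect to be the real obstacle is the first — upgrading ``\(\phi\) induces the identity on \(\Root\)'' to ``\(\phi = \Int(t)\) for some \(t \in T(\FF)\)'' — since this is where the reduction to semisimple (indeed, essentially adjoint) \(G\) and the separable-closedness of \(\FF\) must be used with care, and where one should keep an eye on the centre \(Z(G)\) and on inseparability phenomena in small characteristic (which force mild hypotheses when one wants the isogeny \(1 - w\) to be surjective on \(\FF\)-points). Once \(\phi\) is known to be inner by a torus element, the remaining two steps are just bookkeeping together with a direct appeal to Lemma \ref{lem:elliptic-cohom}.
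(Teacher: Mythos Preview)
Your approach is essentially the paper's: write \(\theta' = \Int(t)\theta\) for some \(t \in T(\FF)\), then invoke Lemma~\ref{lem:elliptic-cohom} to realise \(t\) in the image of \(1 - w\). The paper obtains the first step by citing \cite{springer:corvallis}*{Proposition 2.5(ii)} rather than by your pinning argument, and for the second step applies Lemma~\ref{lem:elliptic-cohom} not to \(T\) but to the adjoint torus \(\Int(T)\), which is a maximal torus in the semisimple group \(G/Z(G)\) regardless of whether \(G\) itself is semisimple. Since one only needs \(\Int(t) = \Int\bigl(s\dotm\theta(s)\inv\bigr)\), not \(t = s\dotm\theta(s)\inv\), working in \(\Int(T)\) suffices, and this is exactly the device that dissolves your hand-waved reduction to the semisimple case. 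Your flagged concern about surjectivity of the isogeny \(1 - w\) on \(\FF\)-points over a merely separably closed field applies equally to the paper's version of the argument.
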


\begin{proof}
We may write \(\theta' = \Int(t)\theta\) for some \(t \in T(\FF)\) \cite{springer:corvallis}*{Proposition 2.5(ii)}.  Since \(\Int(t) \in \Int(T)(\FF) = \bigl((1 - w)\Int(T)\bigr)(\FF) = (1 - w)(\Int(T)(\FF))\) by Lemma \ref{lem:elliptic-cohom}, the result follows.
\end{proof}

\begin{defn}
\label{defn:Rw}
If \(w \in A\) is elliptic, then we put \(\mnotn{\Root_w} = \Root_\theta\), where \(\theta\) is any algebraic automorphism of \((G, T)\) such that \(\ol\theta = w\).  By Corollary \ref{cor:elliptic-cohom}, the set \(\Root_w\) does not depend on the choice of \(\theta\).
\end{defn}

\begin{rem}
\label{rem:not-root}
\newcommand\diag{\operatorname{diag}}
The set \(\Root_\theta\) need not be a root system.  For example, put \(G = \GL_6\), and let \(T\) be the subgroup of diagonal matrices, with roots
\[
\mapto{\root_1}{\diag(t_0, \dotsc, t_5)}{t_0 t_1\inv}
\qandq
\mapto{\root_2}{\diag(t_0, \dotsc, t_5)}{t_1 t_2\inv}.
\]
Put \(\otherroot = \refl_{\root_1}(\root_2)\).  Let \(w\) be the permutation matrix corresponding to the permutation \(\cycle(1 3)\cycle(2 4 5)\), and put \(\theta = \Int\bigl(w\dotm\diag(-1, 1^{+5})\bigr)\).  Then
	\begin{itemize}
	\item \(\theta^e\root_1 = \root_1\) if and only if \(2 \mid e\), and \(\theta^2 = \Int(w^2)\) acts trivially on \(\Lie(G)_{\root_1}\), so \(\root_1 \in \Root(G, T)_\theta\);
	\item \(\theta^e\root_2 = \root_2\) if and only if \(6 \mid e\), and \(\theta^6 = 1\) acts trivially on \(\Lie(G)_{\root_2}\), so \(\root_2 \in \Root(G, T)_\theta\);
	but
	\item \(\theta^3\otherroot = \otherroot\) and \(\theta^3 = \Int\bigl(w^3\dotm\diag(-1, 1^{+5})\bigr)\) acts on \(\Lie(G)_\otherroot\) by multiplication by \(-1\), so \(\otherroot \not\in \Root(G, T)_\theta\).
	\end{itemize}
\end{rem}

Despite Remark \ref{rem:not-root}, it is clear that \(-\Root_\theta = \Root_\theta\), and, further, that \(\Root_\theta \subseteq \Root_{\theta^n}\) for all \(n \in \Z\).  We can do better in some cases.

\begin{lem}
\label{lem:odd-power}
If \(\theta\) is semi-pinned (see Definition \ref{defn:semi-pinned}) and \(n\) is odd, then \(\Root_\theta = \Root_{\theta^n}\).
\end{lem}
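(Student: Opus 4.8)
The plan is to establish the two inclusions separately. One of them, $\Root_\theta \subseteq \Root_{\theta^n}$, has already been recorded just before the lemma and holds for \emph{every} integer $n$: if $\root \in \Root_\theta$ and $e$ is such that $(\theta^n)^e = \theta^{ne}$ fixes $\root$, then $\theta^{ne}$ fixes $\Lie(G)_\root$ pointwise, so $\root \in \Root_{\theta^n}$. Thus the whole content of the lemma is the reverse inclusion $\Root_{\theta^n} \subseteq \Root_\theta$, and it is here that both hypotheses --- that $\theta$ is semi-pinned and that $n$ is odd --- must be used.

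For the reverse inclusion I would fix $\root \in \Root_{\theta^n}$ together with an exponent $e \in \Z$ for which $\theta^e\root = \root$, and try to show that $\theta^e$ acts on $\Lie(G)_\root$ by $+1$; that is exactly what is needed to conclude $\root \in \Root_\theta$. Since $\theta^e$ fixes $\root$, it preserves the line $\Lie(G)_\root$, and the semi-pinned hypothesis says that it acts there by a scalar $\varepsilon \in \sset{\pm1}$; the task is to rule out $\varepsilon = -1$. The key remark is that $\theta^{ne} = (\theta^e)^n$ also fixes $\root$ (iterating $\theta^e\root = \root$) and, since $\theta^e$ acts on $\Lie(G)_\root$ by $\varepsilon$, that $\theta^{ne}$ acts on $\Lie(G)_\root$ by $\varepsilon^n$. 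Now $n$ is odd, so $\varepsilon^n = \varepsilon$. On the other hand, $\theta^{ne} = (\theta^n)^e$ fixes $\root$, so the assumption $\root \in \Root_{\theta^n}$ (applied with the automorphism $\theta^n$ and exponent $e$) forces $\theta^{ne}$ to fix $\Lie(G)_\root$ pointwise, that is, $\varepsilon = 1$. Hence $\theta^e$ fixes $\Lie(G)_\root$ pointwise, and $\root \in \Root_\theta$.

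I do not expect any genuine obstacle: the argument is a short bookkeeping computation with exponents, and the only point meriting a moment's attention is the observation that $\theta^e\root = \root$ propagates to $\theta^{ke}\root = \root$ for all $k$, so that $\theta^{ne}$ really does stabilise the one-dimensional space $\Lie(G)_\root$ and one may legitimately speak of the scalar by which it acts (here one uses that $\theta$, having finite order by the remark following Definition \ref{defn:semi-pinned}, induces an invertible map on $\Root$). The oddness of $n$ enters solely through $(-1)^n = -1$, which is precisely what converts the constraint coming from membership in $\Root_{\theta^n}$ back into a constraint on $\theta^e$; for even $n$ this step breaks down, consistent with the fact that $\Root_\theta$ in general depends on $\theta$ and not merely on $\ol\theta$.
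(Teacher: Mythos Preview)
Your proof is correct and follows essentially the same approach as the paper: pick $\root \in \Root_{\theta^n}$ and $e$ with $\theta^e\root = \root$, use the semi-pinned hypothesis to write the action of $\theta^e$ on $\Lie(G)_\root$ as $\varepsilon \in \{\pm1\}$, observe that $(\theta^n)^e$ then acts by $\varepsilon^n = \varepsilon$, and invoke $\root \in \Root_{\theta^n}$ to force $\varepsilon = 1$. The paper's write-up is slightly terser but the argument is the same.
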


\begin{proof}
Suppose that \(\root \in \Root\), and \(e \in \Z\) is such that \(\theta^e\root = \root\).  Then \(\theta^e\) acts on \(\Lie(G)_\root\) by multiplication by \(\varepsilon \in \sset{\pm1}\); so \((\theta^n)^e\) acts on \(\Lie(G)_\root\) by multiplication by \(\varepsilon^n = \varepsilon\).  If \(\root \in \Root_{\theta^n}\), then \((\theta^n)^e\) must fix \(\Lie(G)_\root\) pointwise, so that \(\varepsilon = 1\).  Since \(e\) was any integer such that \(\theta^e\root = \root\), this means that \(\root \in \Root_\theta\).
\end{proof}

\subsection{A sign associated to a pair of automorphisms}
\label{sec:root-sign}

We continue with the notation established at the beginning of \S\ref{sec:signs}.  As in \S\ref{sec:root-sign}, we let \(\theta\) be an algebraic automorphism of \((G, T)\).  Now let also \(\sigma\) be a twisted automorphism of \((G, T)\) (Definition \ref{defn:auto}) that normalises \(\sgen\theta\), so that \(\ol\sigma\) stabilises \(\Root_\theta\) (setwise).

As described in \S\ref{sec:intro}, we are interested in the number (or at least its parity) of orbits of \sgen{\sigma, \theta} on \Root, but it turns out to be better to use Lemma \ref{lem:perm-sign} to re-phrase this computation in terms of the sign of a certain permutation---because the sign is multiplicative in \(\sigma\), whereas the count of the number of orbits need not be.

\begin{defn}
\label{defn:root-sign}
We write \(\mnotn{\displaystyle\Legend\sigma\theta_{\!\Root}}\) or \(\Legend\sigma\theta_\Root\) for the sign of the permutation of \(\sgen\theta\bslash\Root_\theta\) induced by \(\sigma\).  We may omit the subscript \(\Root\) if it is clear from the context.  If \(w \ldef \ol\theta\) is elliptic, then (by Corollary \ref{cor:elliptic-cohom}) the sign \Legend\sigma\theta depends only on \(w\), so we may, and do, denote it by
\mnonotn{\displaystyle\Legend\sigma w_{\!\Root}}%
\Legend\sigma w.
\end{defn}

\begin{lem}
\label{lem:plus-sign}
\(\Legend\sigma1 = \sgn_\Root(\ol\sigma)\).
\end{lem}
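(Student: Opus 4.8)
The plan is to unwind the definitions and then recognise the quantity in question as the unique character of \(A\) specified in Definition \ref{defn:sgn-Root-A}.  Since \(\theta = 1\) we have \(\theta^e = 1\) for all \(e \in \Z\), so the condition of Definition \ref{defn:R_theta} holds vacuously and \(\Root_1 = \Root\); as \(\sgen\theta\) is trivial, \(\sgen\theta\bslash\Root_\theta = \Root\), and the permutation of \(\sgen\theta\bslash\Root_\theta\) induced by \(\sigma\) is just the permutation of \(\Root\) effected by \(\ol\sigma \in A\).  Thus \(\Legend\sigma1 = \chi(\ol\sigma)\), where \(\chi\colon A \to \sgen{-1}\) is the homomorphism obtained by composing the action map \(A \to \Sgp_\Root\) with the sign character of \(\Sgp_\Root\) from Definition \ref{defn:sgn-perm}.  (This is exactly the notational clash flagged after Definition \ref{defn:sgn-perm}: the lemma asserts that \(\chi\) agrees with the character \(\sgn_\Root\) of Definition \ref{defn:sgn-Root-A}.)  After fixing a system \Simple of simple roots, it therefore suffices, by the uniqueness in Definition \ref{defn:sgn-Root-A}, to check that \(\chi\) is trivial on \(\Aut(\Root, \Simple)\) and that \(\chi(\refl_\root) = -1\) for each \(\root \in \Simple\).

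The key tool will be Lemma \ref{lem:perm-sign}, which gives \(\chi(w) = (-1)^{\card\Root}(-1)^{\card{\sgen w\bslash\Root}}\), combined with the elementary observation that if a permutation \(\pi\) of a finite set \(X\) carrying a fixed-point-free involution \(\iota\) commutes with \(\iota\) and has no orbit \(\omega\) with \(\iota\omega = \omega\), then \(\pi\) has an even number of orbits (because \(\omega \mapsto \iota\omega\) is then a fixed-point-free involution on the set of orbits).  We shall apply this with \(\iota\) the involution \(\root \mapsto -\root\), which has no fixed point since \(0 \notin \Root\).  For \(g \in \Aut(\Root, \Simple)\): since \(g\) stabilises the set \(\Root^+\) of roots positive with respect to \Simple (hence also \(\Root^- = -\Root^+\)), each \(g\)-orbit is contained in \(\Root^+\) or in \(\Root^-\) and so is not \(\iota\)-stable; thus \(\card{\sgen g\bslash\Root}\) is even, and, since \(\card\Root\) is even, Lemma \ref{lem:perm-sign} gives \(\chi(g) = 1\).

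For \(\root \in \Root\): the set \(\sset{\root, -\root}\) is a single \(\refl_\root\)-orbit, and \(\refl_\root\) stabilises \(X \ldef \Root \setminus \sset{\root, -\root}\).  A \(\refl_\root\)-orbit \(\omega \subseteq X\) satisfies \(\iota\omega = \omega\) only if \(\omega = \sset{\beta, -\beta}\) with \(\refl_\root\beta = -\beta\), forcing \(\beta\) to be proportional to \(\root\) and hence (as \Root is reduced) \(\beta = \pm\root\), contradicting \(\omega \subseteq X\).  So \(\refl_\root\) has an even number of orbits on \(X\), hence an odd number on \(\Root\); since \(\card\Root\) is even, Lemma \ref{lem:perm-sign} yields \(\chi(\refl_\root) = -1\).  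This verifies both required properties of \(\chi\), so \(\chi = \sgn_\Root\) and \(\Legend\sigma1 = \sgn_\Root(\ol\sigma)\).  I expect no genuine obstacle in this argument: the only subtlety is bookkeeping---keeping apart the two meanings of \(\sgn_\Root\) (a distinction the lemma exactly reconciles), together with the orbit-parity observation for permutations commuting with negation.
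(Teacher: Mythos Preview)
Your proof is correct and follows essentially the same route as the paper's: both reduce to the generators \(\Aut(\Root,\Simple)\) and the simple reflections, and both compute the permutation sign on each by counting \emph{symmetric} (i.e., negation-stable) \(v\)-orbits via the parity argument you spell out. The only cosmetic difference is in the reflection case: the paper invokes Bourbaki's result that \(\refl_\root\) permutes \(\Root^+\setminus\{\root\}\) to see that \(\{\pm\root\}\) is the unique symmetric orbit, whereas you argue directly that a symmetric \(\refl_\root\)-orbit in \(\Root\setminus\{\pm\root\}\) would force a root proportional to \(\root\); your version is a touch more self-contained and works for any \(\root\in\Root\), not just simple ones.
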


\begin{rem}
One may view Lemma \ref{lem:plus-sign} as saying that the two meanings of \(\sgn_\Root\), as a character of \(\Sgp_\Root\) (Definition \ref{defn:sgn-perm}) and of \(\Aut(\Root)\) (Definition \ref{defn:sgn-Root-A}), agree on \(\Aut(\Root)\).
\end{rem}

\begin{proof}
Obviously, \(\Root_1 = \Root\).  Let \Simple be a system of simple roots in \Root.  Put \(v = \ol\sigma\).

It suffices to show the result in case \(v\) stabilises \Simple, or \(v = \refl_\root\) for some \(\root \in \Simple\).  In the former case, \(v\) has no symmetric orbits on \Root, so that \(\Legend\sigma1 = 1 = \sgn_\Root(v)\).  In the latter case, by \bourbaki*{Corollaire VI.1.6.1}, \(v\) has exactly one symmetric orbit on \Root (namely, \sset{\pm\root}), so \(\Legend\sigma1 = -1 = \sgn_\Root(v)\).
\end{proof}

\begin{rem}
\label{rem:plus-sign}
Since \card\Root is even, we have by Lemma \ref{lem:perm-sign} that \(\Legend\sigma1 = (-1)^{\card{\sgen\sigma\bslash\Root}}\).
\end{rem}

\begin{rem}
\label{rem:reduce-irreducible}
The computation of the symbol \(\Legend\sigma\theta_\Root\) in general may be reduced to its computation in case \Root is irreducible, \ie, \(G\) is simple.

To see this, put \(\Gamma = \sgen{\sigma, \theta}\) and \(N = \sgen\theta\).

If \(\Root'\) is an irreducible component of \Root, then write \(e_{\Root'}\) for the least positive integer \(e\) such that \(\theta^e\) stabilises \(\Root'\) (setwise), and \(f_{\Root'}\) for the least positive integer \(f\) such that \(\sigma^f\) stabilises \(N\dota\Root'\).  Put \(\theta_{\Root'} = \theta^{e_{\Root'}}\), and write \(\sigma_{\Root'}\) for any element of \(N\dotm\sigma^{f_{\Root'}}\) that stabilises \(\Root'\) (setwise).  Put \(N_{\Root'} = \sgen{\theta_{\Root'}} = \stab_N(\Root')\) and \(\Gamma_{\Root'} = \sgen{\sigma_{\Root'}, \theta_{\Root'}} = \stab_\Gamma(\Root')\).  Then \(\theta_{\Root'}\) preserves the group \(G'\) generated by \(T\) and the root subgroups of \(G\) corresponding to roots in \(\Root'\), which has root system \(\Root'\); and we have that \(\Root' \cap \Root_\theta = \Root'_{\theta_{\Root'}}\).  Note that \(e_{\Root'}\) (hence \(\theta_{\Root'}\)) and \(f_{\Root'}\) depend only on the \sgen{\sigma, \theta}-orbit of \(\Root'\).
We have that \(N\bslash N\dota(\Root' \cap \Root_\theta)\) is naturally in bijection with \(N_{\Root'}\bslash\Root'_{\theta_{\Root'}}\), and \(\Gamma\bslash\Gamma\dota(\Root' \cap \Root_\theta)\) with \(\Gamma_{\Root'}\bslash\Root'_{\theta_{\Root'}}\).  Finally, the set of irreducible components of \(\Gamma\dota\Root'\) falls into precisely \(e_{\Root'}\) orbits under \(N\).

By Lemma \ref{lem:perm-sign}, we have that
\[
\Legend\sigma\theta_{\!\Root} = (-1)^{\card{N\bslash\Root} - \card{\Gamma\bslash\Root}}.
\]
The above discussion shows that we may write
\begin{align*}
\Legend\sigma\theta_{\!\Root}
& {}= \prod (-1)^{e_{\Root'}\dotm\card{N_{\Root'}\bslash\Root'} - \card{\Gamma_{\Root'}\bslash\Root'}} \\
& {}= \prod (-1)^{\card{N_{\Root'}\bslash\Root'} - \card{\Gamma_{\Root'}\bslash\Root'}}(-1)^{(e_{\Root'} - 1)\dotm\card{\sgen{\theta_{\Root'}}\bslash\Root'}},
\end{align*}
where the product runs over the \(\Gamma\)-orbits of irreducible components \(\Root'\) of \Root.  By Remark \ref{rem:plus-sign} (and Lemma \ref{lem:perm-sign} again), this may be re-written as
\[
\Legend\sigma\theta_{\!\Root}
= \prod \Legend{\sigma_{\Root'}}{\theta_{\Root'}}_{\!\Root'}\dotm\Legend{\theta_{\Root'}}1_{\!\Root'}^{e_{\Root'} - 1}.
\]
\end{rem}

\begin{rem}
\label{rem:irrational}
It is clear that \Legend\sigma\theta depends only on \(v \ldef \ol\sigma\) (and \(\theta\)), so we will sometimes denote it by
\mnonotn{\displaystyle\Legend v\theta_{\!\Root}}%
\Legend v\theta.

\newcommand\diag{\operatorname{diag}}
Note, however, that the set \(\Root_\theta\) need not be stable under \(N_A(\smashed\sgen{\ol\theta})\), or even \(C_W(\ol\theta)\).  This puts some constraints on the possible `numerators' \(v\) in a symbol \Legend v\theta; but see Proposition \ref{prop:fix-lift} below.

For example, put \(G = \GL_3\), and let \(T\) be the subgroup of diagonal matrices, with roots
\[
\map{\root_1}{\diag(t_0, t_1, t_2)}{t_0 t_1\inv}
\qandq
\map{\root_2}{\diag(t_0, t_1, t_2)}{t_1 t_2\inv}.
\]
Put \(\theta = \Int(\diag(1, 1, -1))\).  Then \(\refl_{\root_2}\) commutes with \(\ol\theta = 1\), but does not stabilise \(\Root_\theta = \sset{\pm\root_1}\); so there is no abstract automorphism \(\sigma\) such that \(\sigma\) normalises \sgen\theta and \(\ol\sigma = \refl_{\root_2}\).

Further, \Legend\sigma\theta may depend on \(\theta\) (and \ol\sigma), not just on \ol\theta.  For example, put \(G = \GL_2\), and let \(T\) be the subgroup of diagonal matrices.  Let \(\sigma\) be the inverse-transpose map on \(G\),
so that \(\ol\sigma = -1\).  If \(\theta = 1\) and \(\theta' = \Int(\diag(1, -1))\), then \(\ol\theta = \ol\theta'\); but \(\Root_\theta = \Root\), so \(\Legend{-1}\theta = -1\), and \(\Root_{\theta'} = \emptyset\), so \(\Legend{-1}{\theta'} = 1\).  If \(w \ldef \ol\theta\) is elliptic, then this issue does not arise, so that we may write
\mnonotn{\displaystyle\Legend v w_{\!\Root}}%
\Legend v w in place of \Legend\sigma\theta.
\end{rem}

The next two results show that the possible obstruction to defining \Legend v\theta mentioned in Remark \ref{rem:irrational} does not arise if \ol\theta is elliptic.

\begin{lem}
\label{lem:rational}
An \ff-structure on \(T\) may be extended to an \ff-structure on \(G\) if and only if the action of \(\Aut(\FF)\) on \(\bX^*(T)\) is by automorphisms of \Root.
\end{lem}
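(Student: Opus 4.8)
The plan is to prove the two directions by comparing the \(\Gal(\FF/\ff)\)-action on \(\bX^*(T)\) with the action on root subgroups.  Write \(\Gamma = \Gal(\FF/\ff)\) (it acts on \(\bX^*(T)\) through a finite quotient, so that ``\(\Aut(\FF)\)'' may be read as this finite action) and let \(\rho\colon \Gamma \to \GL(\bX^*(T))\) be the action afforded by the given \(\ff\)-structure on \(T\).  For the easy direction: if the \(\ff\)-structure on \(T\) extends to one on \(G\), then the resulting semilinear \(\Gamma\)-action on \(G\) preserves \(T\), hence permutes the (finitely many) root subgroups, carrying \(U_\root\) to \(U_{\rho(\gamma)\root}\), and so carrying the rank-one subgroup generated by \(U_\root\) and \(U_{-\root}\) to the one generated by \(U_{\rho(\gamma)\root}\) and \(U_{-\rho(\gamma)\root}\); consequently \(\root^\vee\) is carried to \((\rho(\gamma)\root)^\vee\).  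Thus each \(\rho(\gamma)\) is an automorphism of the root datum \(\Root(G, T)\) --- in particular, it preserves \(\Root\) --- which is exactly the stated condition.

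For the substantive direction, the plan is to produce an \(\ff\)-structure on the \emph{pair} \((G, T)\).  Since \(\rho\) is a homomorphism with image in \(\Aut(\Root(G, T)) = W \rtimes \Aut(\Root, \Simple)\) (the root-datum analogue of \(\eqref{eq:aut-sd}_\Simple\)), uniqueness of this semidirect decomposition lets us write \(\rho(\gamma) = w_\gamma\mu_\gamma\) with \(\mu\colon \Gamma \to \Aut(\Root, \Simple)\) a homomorphism and \(\gamma \mapsto w_\gamma\) a \(1\)-cocycle valued in \(W\) (for the action of \(\Gamma\) on \(W\) through \(\mu\)).  Let \(G^*\) be the quasi-split \(\ff\)-form of \(G\) with \(*\)-action \(\mu\); it exists and is unique up to \(\ff\)-isomorphism, and \((G^*)_\FF \cong G\), both being \(\FF\)-split with root datum \(\Root(G, T)\).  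Inside \(G^*\) I would realise the stable class of \(w\): there is a maximal \(\ff\)-torus \(T' \subseteq G^*\), together with an identification of \(\Root(G^*, T')\) with \(\Root(G, T)\), under which the \(\Gamma\)-action on \(\bX^*(T')\) becomes \(w\mu = \rho\).  Then \((G^*, T')\) is an \(\ff\)-form of \((G, T)\): transporting the \(\ff\)-structure of \(G^*\) along any \(\FF\)-isomorphism \((G^*)_\FF \xrightarrow{\sim} G\) carrying \(T'\) onto \(T\) and inducing the chosen identification of root data yields an \(\ff\)-structure on \(G\) that preserves \(T\) and induces \(\rho\) on \(\bX^*(T)\); and, a torus being rigid (the semilinear automorphism of \(T\) inducing a prescribed automorphism of \(\bX^*(T)\) is unique), this \(\ff\)-structure restricts on \(T\) to precisely the given one, completing the proof.

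The passage to \(G^*\), the cocycle bookkeeping, and the transport along an \(\FF\)-isomorphism are all formal; the step I expect to require genuine care is the realisation of \(w\) by a maximal \(\ff\)-torus \emph{inside the quasi-split group} \(G^*\).  This amounts to lifting the \(1\)-cocycle \(w\colon \Gamma \to W\) to one valued in \(N_{G^*}(T^*)(\FF)\), i.e.\ to the vanishing of an obstruction class in \(H^2\bigl(\Gamma, (T^*/Z(G^*))(\FF)\bigr)\).  That this obstruction vanishes is a standard feature of quasi-split groups --- it can be deduced from the section \(W \to N_{G^*}(T^*)\) attached to an \(\ff\)-rational pinning, essentially the extended Weyl group \(\dot W(G^*, T^*)\) of \S\ref{sec:tits}, or quoted from the literature on rational conjugacy classes of maximal tori --- and the rest is routine descent.
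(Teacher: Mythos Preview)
Your argument is correct and follows essentially the same route as the paper's own proof: decompose the Galois action via \(\eqref{eq:aut-sd}_\Simple\), build the quasi-split form from the outer part \(\mu\), and then lift the Weyl-group cocycle \(w\) to \(N_{G^*}(T^*)\) in order to twist.  The paper handles the lifting step by a direct citation of Raghunathan \cite{raghunathan:tori}*{Theorem 1.1}, whereas you sketch it via the \(\ff\)-rational Tits section (equivalently, the extended Weyl group of \S\ref{sec:tits}); these are the same idea, and Raghunathan's proof is precisely the formalisation of that sketch.  One small inaccuracy: the obstruction to lifting \(w\) along \(1 \to T^* \to N_{G^*}(T^*) \to W \to 1\) lives in \(H^2\bigl(\Gamma, T^*(\FF)\bigr)\), not \(H^2\bigl(\Gamma, (T^*/Z(G^*))(\FF)\bigr)\); the two problems are equivalent here since \(Z(G^*)\) acts trivially by conjugation, but the cleaner statement is the former.
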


\begin{proof}
The `only if' direction is obvious.

For the `if' direction, note that, if \(\ff\sep\) is the separable closure of \ff in \FF, then \(T\) is \(\ff\sep\)-split for any \ff-structure, so that \(\Aut(\FF/\ff\sep)\) acts trivially on \(\bX^*(T)\) \springer*{Proposition 13.1.1(ii) and 13.2.2(i)}; so we may, and do, assume that \(\FF = \ff\sep\).  We shall write \(G_\FF\) and \(T_\FF\) instead of \(G\) and \(T\) when we wish to emphasise that we are viewing these objects as groups over \FF.

Now let
\begin{itemize}
\item \map\varphi{\Gal(\FF/\ff)}A be the action map for the chosen \ff-structure on \(T\),
\item \Simple be a system of simple roots in \Root,
and 
\item \(\varphi_0\) be the composition of \(\varphi\) with the projection \anonmap A{\Aut(\Root, \Simple)} along \(\eqref{eq:aut-sd}_\Simple\).
\end{itemize}
By \springer*{\S16.4.7}, there is a (quasisplit) \ff-structure \((G\qsform, T\qsform)\) on \((G_\FF, T_\FF)\) for which \(\varphi_0\) is the action map on \(\bX^*(T)\).  By \cite{raghunathan:tori}*{Theorem 1.1}, there is a \(1\)-cocycle \(c\) on \(\Gal(\FF/\ff)\) with values in \(N_{G\qsform}(T\qsform)\) such that, for all \(\sigma \in \Gal(\FF/\ff)\), the element \(c_\sigma\) is a preimage in \(N_{G\qsform}(T\qsform)\) of \(\varphi(\sigma)\varphi_0(\sigma)\inv \in W \cong W(G\qsform, T\qsform)\).  The cocycle \(c\) naturally gives rise, \via the interior action, to cocycles with values in \(\Aut(G\qsform)\) and \(\Aut(T\qsform)\), so we may construct the corresponding twists \(G\) and \(T\) of \(G\qsform\) and \(T\qsform\), respectively \springer*{\S11.3.3}.  We have that \(T\) is \ff-isomorphic to the chosen \ff-structure, so that \(G\) is the desired \ff-structure on \(G_\FF\).
\end{proof}

\begin{prop}
\toplabel{prop:fix-lift}
Suppose that
\begin{itemize}
\item \(\FF/\ff\) is a procyclic Galois extension,
\item \(\Frob\) is a progenerator of \(\Gal(\FF/\ff)\),
\item \(w \in A\) is elliptic,
\item \(v \in A\) and \(q \in \Z\) satisfy \(v w v\inv = w^q\),
and
\item \(\theta \in \Aut(G, T)\) satisfies \(\ol\theta = w\).
\end{itemize}
Then there is an \ff-structure on \((G, T)\) such that
\begin{itemize}
\item \(\Frob\) acts on \(\bX^*(T)\) by \(v\),
and
\item \(\Frob\theta\Frob\inv = \theta^q\).
\end{itemize}
\end{prop}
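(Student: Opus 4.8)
The plan is to produce the required \ff-structure in two steps: first fix the Galois action on $\bX^*(T)$, then correct the resulting conjugation action on $\theta$ by an inner twist by an element of $T$.

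First I would build some \ff-structure realizing $v$. Since $A = \Aut(\Root)$ is finite and $\Gal(\FF/\ff)$ is procyclic, the rule $\Frob \mapsto v$ extends to a continuous homomorphism $\Gal(\FF/\ff) \to A$; viewing $v$ as acting on $\bX^*(T)$, this gives an \ff-structure on $T$ for which \Frob\ acts by $v$. (As in the proof of Lemma \ref{lem:rational} we may and do first replace \FF\ by the separable closure of \ff\ in \FF, which is still procyclic over \ff, so that $T$ is \FF-split.) Because $v$ acts by an automorphism of \Root, Lemma \ref{lem:rational} extends this to an \ff-structure on $(G, T)$; write $\sigma$ for the twisted automorphism of $(G, T)$ afforded by the action of \Frob, so that $\ol\sigma = v$.

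Next I would analyse $\sigma\theta\sigma\inv$. The Galois part of $\theta$ is trivial, so those of $\sigma$ and $\sigma\inv$ cancel and $\sigma\theta\sigma\inv$ is an algebraic automorphism of $(G, T)$; on $\bX^*(T)$ it acts by $v\ol\theta v\inv = v w v\inv = w^q = \ol{\theta^q}$. Moreover $w^q = v w v\inv$ is elliptic, since $w$ is and ellipticity --- absence of a non-$0$ fixed vector, by Proposition \ref{prop:elliptic} --- is conjugation-invariant. Hence Corollary \ref{cor:elliptic-cohom}, applied with $\theta^q$ in place of $\theta$, supplies $s \in T(\FF)$ with $\sigma\theta\sigma\inv = \Int(s)\theta^q\Int(s)\inv$. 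The natural move is then to replace $\sigma$ by $\sigma' \ldef \Int(s)\inv\sigma$: since $s \in T$ acts trivially on $\bX^*(T)$ we still have $\ol{\sigma'} = v$, while
\[
\sigma'\theta{\sigma'}\inv = \Int(s)\inv\bigl(\Int(s)\theta^q\Int(s)\inv\bigr)\Int(s) = \theta^q,
\]
so $\sigma'$ has exactly the two desired properties.

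The point that must still be checked --- and the step I expect to be the main obstacle --- is that $\Int(s)\inv\sigma$ really comes from an \ff-structure on $(G, T)$, \ie\ that inner twisting by $s\inv \in T(\FF)$ yields a continuous $1$-cocycle on $\Gal(\FF/\ff)$; as $\Gal(\FF/\ff)$ is procyclic, the sole requirement is that the $\sigma$-norm $s\inv\sigma(s\inv)\cdots\sigma^{N - 1}(s\inv)$ land in $Z(G)$ for some $N$. When \ff\ is finite --- the case relevant to the application in \cite{spice:signs-alg} --- this is automatic: any element of $T(\FF)$ lies in $T(\ff')$ for a finite subextension $\ff'$, so its $\sigma$-norm lies in the finite group $T(\ff)$, and a suitable power of it is trivial. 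For general \ff\ one instead appeals again to the ellipticity of $w$: as in the proof of Corollary \ref{cor:elliptic-cohom}, Lemma \ref{lem:elliptic-cohom} (\(T = (1 - w)T\), \(T^w\) finite) forces the relevant obstruction --- namely, whether the set of $b \in T(\FF)$ with $\Int(b)\sigma$ arising from an \ff-structure meets the coset $\set{b \in T(\FF)}{\Int(b)\sigma \text{ conjugates }\theta\text{ to }\theta^q}$ (which contains $s\inv$) --- to vanish; one then takes for $\sigma'$ the automorphism $\Int(b)\sigma$ for any $b$ in that intersection, which acts on $\bX^*(T)$ by $v$, satisfies $\sigma'\theta{\sigma'}\inv = \theta^q$, and arises from an \ff-structure on $(G, T)$.
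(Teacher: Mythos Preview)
Your strategy is exactly the paper's: build an initial \ff-structure realising $v$ via Lemma~\ref{lem:rational}, use Corollary~\ref{cor:elliptic-cohom} to find $s \in T(\FF)$ with $\sigma\theta\sigma^{-1} = \Int(s)\theta^q\Int(s)^{-1}$, then twist by the cocycle $c_{\Frob} = \Int(s)^{-1}$. Your verification that the twisted $\sigma'$ has the required properties is correct, and your finite-\ff\ argument for the cocycle condition is fine.

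The gap is in the general case. You correctly name Lemma~\ref{lem:elliptic-cohom} as the key input, but ``forces the relevant obstruction to vanish'' is not an argument, and your reformulation in terms of an intersection of a coset with the set of cocycle-extendable $b$'s does not by itself explain why that intersection is nonempty. What the paper actually does---and what you should supply---is the following computation. Choose $d$ so that $\Frob_0^d$ fixes $s$ and commutes with $\theta$ (both hold because $\FF$ is a union of finite subextensions of \ff, and $\theta \in \Aut(G)(\FF)$). Iterating the relation $\Frob_0\theta\Frob_0^{-1} = \Int(s)\theta^q\Int(s)^{-1}$ yields $\theta = \Int(t)\theta\Int(t)^{-1}$ with $t = \prod_{i=0}^{d-1}\Frob_0^i(s) \in T(\ff)$; hence $\Int(t)$ commutes with $\theta$, i.e.\ $\Int(t) \in \Int(T)^w$. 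Now Lemma~\ref{lem:elliptic-cohom} says $\Int(T)^w$ is finite, of order $N$ say, so $\Int\bigl(\prod_{i=0}^{Nd-1}\Frob_0^i(s)\bigr) = \Int(t)^N = 1$, and the cocycle exists. The point is that finiteness of $T^w$ enters through $t$ being $w$-fixed (in $\Int T$), which comes from the explicit iteration---not from any abstract coset count.
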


\begin{proof}
As in the proof of Lemma \ref{lem:rational}, we shall write \(G_\FF\) and \(T_\FF\) instead of \(G\) and \(T\) when we wish to emphasise that we are viewing these objects as groups over \FF.

By Lemma \ref{lem:rational}, there is \emph{some} \ff-structure \((G_0, T_0)\) on \((G_\FF, T_\FF)\) for which \(\Frob\) acts on \(\bX^*(T)\) by \(v\).  For emphasis, we shall denote the action of \(\Frob\) according to this structure by \(\Frob_0\).

By Corollary \ref{cor:elliptic-cohom}, there is an element \(s \in T_0(\FF)\) such that
\begin{equation}
\tag{$*$}
\sublabel{eq:equivalent}
\Frob_0\theta\Frob_0\inv = \Int(s)\theta^q\Int(s)\inv.
\end{equation}
Choose a positive integer \(d\) so that \(\theta\) commutes with, and \(s\) is fixed by, \(\Frob_0^d\).  Then conjugating \subeqref{eq:equivalent} repeatedly by \(\Frob_0\) gives
\[
\theta = \Int(t)\theta\Int(t)\inv = \Int((1 - w)t)\theta,
\]
where \(t = \prod_{i = 0}^{d - 1} \Frob_0^i s \in T_0(\ff)\).  Then \((1 - w)t = 1\), \ie, \(t \in T_0(\ff)^w = T_0^w(\ff)\).  By Lemma \ref{lem:elliptic-cohom}, the group \(\Int(T_0)^w(\FF)\) has finite order, say \(N\); so
\[
\Int\Bigl(\prod_{i = 0}^{N d - 1} \Frob_0^i s\Bigr) = \Int\Bigl(\prod_{j = 0}^{N - 1} \Frob_0^j t\Bigr) = \Int(t)^N = 1.
\]
That is, there is a \(1\)-cocycle \(c\) on \(\Gal(\FF/\ff)\) with values in \(\Int(T_0) \subseteq \Aut(G_0)\) such that \(c_{\Frob} = \Int(s)\inv\).  Let \((G, T)\) be the twist of \((G_0, T_0)\) by \(c\).
\end{proof}

We have already computed \Legend\cdot1 (see Lemma \ref{lem:plus-sign}).  The next result nearly computes \Legend\cdot{-1}, and Remark \ref{rem:minus-sign} finishes the job.

\begin{prop}
\label{prop:minus-sign}
If \Root is irreducible and \(v \in W\), then
\[
\Legend v{-1} = \sgn\textsub{\Root, long}(v)^g\sgn\textsub{\Root, short}(v)^{\dual g},
\]
where \(\sgn\textsub{\Root, \textasteriskcentered}\) is as in Definition \ref{defn:sgn-Root-sl}, and \(g\) and \dual g are the dual Coxeter numbers \cite{suter:coxeter}*{(1), p.~148} of \Root and its dual root system, respectively.
\end{prop}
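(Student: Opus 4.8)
The plan is to prove that $\Root_{-1} = \Root$; this turns $\Legend v{-1}$ into the sign of the permutation of the set $\sgen{-1}\bslash\Root$ of lines spanned by roots that is induced by $v$, which can then be evaluated by a short count at the highest root. For the key reduction, note first that $-1 \in A$ is elliptic (it has no non-zero fixed vector; Proposition \ref{prop:elliptic}), so $\Legend v{-1}$ is defined. Passing if necessary to the adjoint quotient of $G$ (which affects neither $\Root$ nor any quantity in play), Proposition \ref{prop:A+-} provides a semi-pinned algebraic automorphism $\theta$ of $(G, T)$ with $\ol\theta = -1$. Fix $\root \in \Root$: since $\ol\theta\root = -\root$, the automorphism $\theta$ stabilises the subgroup $G_\root$ of $G$ generated by the root subgroups for $\pm\root$ --- a rank-one semisimple group with maximal torus $T_\root \ldef (T \cap G_\root)\conn$ --- and acts on $\Root(G_\root, T_\root) = \sset{\pm\root}$ by $-1$. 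As every automorphism of a rank-one semisimple group is inner, $\theta$ restricts on $G_\root$ to $\Int(g)$ for some $g$ lying over the non-trivial element of $W(G_\root, T_\root) \cong \Z/2\Z$; hence $\theta^2$ restricts to $\Int(g^2)$, and $g^2 \in T_\root$ satisfies $g^2 = g g^2 g\inv = (g^2)\inv$ (as $g$ acts on $T_\root$ by inversion), so $g^2$ is $2$-torsion and $g^2 \in \sset{1, \root^\vee(-1)}$. In either case $\theta^2$ acts on $\Lie(G)_\root$ by $\root(g^2) = (\pm1)^{\pair\root{\root^\vee}} = 1$. Since $\theta^e\root = \root$ exactly when $e$ is even, and then $\theta^e = (\theta^2)^{e/2}$ fixes $\Lie(G)_\root$ pointwise, $\root \in \Root_\theta$; hence $\Root_{-1} = \Root_\theta = \Root$.

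Given this, for $v \in W$ the symbol $\Legend v{-1}$ is by definition the sign of the permutation of $\sgen\theta\bslash\Root_\theta$ induced by any twisted automorphism lying over $v$ (one exists by Proposition \ref{prop:fix-lift}); because $\Root_\theta = \Root$ and $\theta$ acts on $\Root$ through $\ol\theta = -1$, this is the sign $\chi(v)$ of the permutation of $\sgen{-1}\bslash\Root$ induced by $v$. Both $\chi \colon W \to \sgen{-1}$ and $v \mapsto \sgn\textsub{\Root, long}(v)^g\sgn\textsub{\Root, short}(v)^{\dual g}$ are homomorphisms, hence constant on $W$-conjugacy classes of reflections; since the long roots of $\Root$ (and, when $\Root$ is not simply laced, the short roots) form a single $W$-orbit, and $W$ is generated by the $\refl_\root$ with $\root$ simple, it suffices to check that $\chi(\refl_\root) = (-1)^g$ when $\root$ is long and $\chi(\refl_\root) = (-1)^{\dual g}$ when $\root$ is short --- these being exactly the values of the other homomorphism, by Definition \ref{defn:sgn-Root-sl} and the Remark following it.

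For the long case, since $\chi$ is a homomorphism and all long roots are $W$-conjugate, I may take $\root = \otherroot$, the highest root (which is long). The involution $\refl_\otherroot$ of $\sgen{-1}\bslash\Root$ fixes the lines $\sgen{-1}\beta$ with $\refl_\otherroot\beta = \pm\beta$ --- namely $\sgen{-1}\otherroot$ together with the lines through the roots orthogonal to $\otherroot$ --- and, $\otherroot$ being long, interchanges in transpositions precisely the lines $\sgen{-1}\beta$ with $\beta \in \Root^+$ and $\pair\beta{\otherroot^\vee} = 1$; so $\chi(\refl_\otherroot) = (-1)^N$ with $2N = \card{\set{\beta \in \Root^+}{\pair\beta{\otherroot^\vee} = 1}}$. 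As $\pair\beta{\otherroot^\vee} \ge 0$ for $\beta \in \Root^+$, with value $2$ only at $\otherroot$ and value $\le 1$ otherwise, summing over $\Root^+$ and using the standard identity $\sum_{\beta \in \Root^+}\pair\beta{\otherroot^\vee} = 2(g - 1)$ (equivalently, $g - 1$ is the sum of the coefficients of $\otherroot^\vee$ in the basis of simple coroots) gives $2N + 2 = 2(g-1)$, so $N = g - 2$ and $\chi(\refl_\otherroot) = (-1)^g$. For the short case, the $W$-equivariant bijection $\Root \to \dual\Root$, $\beta \mapsto \beta^\vee$, carries $\refl_\root$ to $\refl_{\root^\vee}$ and identifies $\sgen{-1}\bslash\Root$ with $\sgen{-1}\bslash\dual\Root$ compatibly, so $\chi(\refl_\root)$ equals the corresponding sign for $\dual\Root$ at $\refl_{\root^\vee}$; since $\root^\vee$ is long in $\dual\Root$, the long case applied to $\dual\Root$ gives $\chi(\refl_\root) = (-1)^{\dual g}$.

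I expect the main obstacle to be the first step, the identity $\Root_{-1} = \Root$: once it is available, the passage to a permutation of $\sgen{-1}\bslash\Root$ and then to the two model reflections is purely formal, and all that remains is the brief highest-root count, whose only real input is the description of the dual Coxeter number in terms of the highest coroot.
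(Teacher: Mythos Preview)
Your argument follows the same line as the paper's: establish $\Root_{-1}=\Root$, reduce to a single reflection, and count the lines moved via the dual Coxeter number. Where the paper cites Bourbaki for the first step and Suter's Proposition~1 for the count $4g-6$ (respectively $4\dual g-6$) of roots non-orthogonal to a given long (respectively short) root, you supply self-contained arguments; your highest-root computation via $\langle 2\rho,\otherroot^\vee\rangle = 2(g-1)$ together with the duality reduction for the short case is equivalent to Suter's count and arguably tidier.

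There is one slip in your proof that $\Root_{-1}=\Root$. The inference ``$g^2$ is $2$-torsion in $T_\root$, hence $g^2\in\{1,\root^\vee(-1)\}$'' presumes that $\root^\vee$ is an isomorphism onto $T_\root$, i.e.\ that $G_\root\cong\mathrm{SL}_2$. When $G_\root\cong\mathrm{PGL}_2$ one has $\root^\vee(-1)=1$, while the nontrivial $2$-torsion element $t_0\in T_\root$ satisfies $\root(t_0)=-1$, which would give the wrong sign if $g^2=t_0$. Your conclusion is nonetheless correct: in $\mathrm{PGL}_2$ every $g\in N(T_\root)\setminus T_\root$ is the class of an off-diagonal $2\times 2$ matrix and therefore squares to $1$, so $\root(g^2)=1$ holds in either case. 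Alternatively, bypass the issue by exhibiting an involutive lift of $-1$ (the Chevalley involution), which is what the paper's Bourbaki citation supplies.
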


\begin{proof}
By \cite{bourbaki:lie-gp+lie-alg_7-9}*{Proposition VIII.4.4.5}, we have that \(\Root_{-1} = \Root\).  It suffices to assume that \(v = \refl_\root\) for some \(\root \in \Root\).

Choose a system \Simple of simple roots that includes \root \bourbaki*{Proposition VI.1.5.15}.  Since \(\refl_\root\) permutes the (\Simple-)positive roots other than \root \bourbaki*{Corollaire 1 to Proposition VI.1.6.17}, the fixed points of \(\refl_\root\) on \(\dot\Root \ldef \sgen{-1}\bslash\Root\) are precisely \sset{\pm\alpha} and each \sset{\pm\otherroot}, where \(\otherroot \in \Root\) is a root perpendicular to \root.  By \cite{suter:coxeter}*{Proposition 1}, if \root is long (respectively, short), then there are \(4g - 6\) (respectively, \(4\dual g - 6\)) roots \emph{not} perpendicular to it, hence \(2g - 4\) (respectively, \(2\dual g - 4\)) non-fixed points of \(\refl_\root\) on \(\dot\Root\).  Since \(\refl_\root\) acts as an involution of \(\dot\Root\), it must therefore be a product of \(g - 2\) (respectively, \(\dual g - 2\)) transpositions, so
\[
\Legend v{-1} = (-1)^{g - 2} = \sgn\textsub{\Root, long}(v)^g = \sgn\textsub{\Root, long}(v)^g\sgn\textsub{\Root, short}(v)^{\dual g}
\]
(respectively, \(\Legend v{-1} = \sgn\textsub{\Root, short}(v)^{\dual g} = \sgn\textsub{\Root, long}(v)^g\sgn\textsub{\Root, short}(v)^{\dual g}\)).
\end{proof}

\begin{rem}
\label{rem:minus-sign}
Since \(\Legend v{-1} = 1\) for \(v = -1\), Proposition \ref{prop:minus-sign} completely determines \(\Legend\cdot{-1}\) except for \Root of type \Dn with \(n\) even.  If \(v\) is an involutive automorphism of \Dn that stabilises (setwise) a system of simple roots, then, after relabelling if necessary, we have that \(v\) acts on \(\R\Dn = \bigoplus_{i \in \Z/n\Z} \R\basis_i\) by fixing \(\basis_i\) for \(i \ne 0\), and negating \(\basis_0\).  Then \(v\) acts on \(\sgen{-1}\bslash\Root\) by the product of the \(n - 1\) transpositions swapping \(\sgen{-1}\dota(\basis_0 - \basis_i)\) and \(\sgen{-1}\dota(\basis_0 + \basis_i)\) for \(i \ne 0\); so
\[
\Legend v{-1} = (-1)^{n - 1}.
\]
\end{rem}

\begin{rem}
\label{rem:dual-Coxeter}
In light of Proposition \ref{prop:minus-sign}, it is helpful to have the following chart of dual Coxeter numbers.  Note that, for a simply laced root system, the dual Coxeter number is just the Coxeter number.
\[\begin{array}{c|c}
\text{type of \Root} & \text{dual Coxeter number} \\ \hline
\An                  & n                          \\
\Bn                  & 2n - 1                     \\
\Cn                  & n + 1                      \\
\Dn                  & 2(n - 1)                   \\
\Gn                  & 4                          \\
\Fn                  & 9                          \\
\En[6]               & 12                         \\
\En[7]               & 18                         \\
\En[8]               & 30
\end{array}\]
In particular, by Remark \ref{rem:minus-sign}, the kernel of \(\Legend\cdot{-1}\) is
\begin{itemize}
\item \(A\) if \Root is of type \An with \(n\) even, \Dn with \(n\) odd, \Gn, or \En with \(n \in \sset{6, 7, 8}\),
\item \(\ker \sgn_{\An} \times \sgen{-1}\) if \(\Root = \An\) with \(n\) odd,
\item \(W(\Dn)\) if \Root is of type \Bn or \Cn, or \(\Root = \Dn\) with \(n > 4\) even,
\item the unique index-\(2\) subgroup of \(A\) containing \(W\) if \(\Root = \Dn[4]\),
and
\item \(\ker \sgn_{\Fn}\) if \(\Root = \Fn\).
\end{itemize}
\end{rem}

Recall that our goal is to reduce the computation of \Legend\sigma\theta in general to a manageable case-by-case computation.  The next result allows us to reduce some computations to the case where \ol\theta has \(2\)-power order.  Note that Lemma \ref{lem:odd-power} gives conditions under which the hypothesis on \(\Root_\theta\) is satisfied.

\begin{lem}
\toplabel{lem:reduce-2-power}
If \(n\) is an odd integer and \(\Root_\theta = \Root_{\theta^n}\), then \(\Legend\sigma\theta = \Legend\sigma{\theta^n}\).
\end{lem}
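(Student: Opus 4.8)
The plan is to reduce, via Lemma \ref{lem:perm-sign}, to a comparison of the parities of orbit counts. Write \(\sigma\theta\sigma\inv = \theta^q\); then \(\sigma\) also normalises \(\sgen{\theta^n}\), since \(\sigma\theta^n\sigma\inv = (\theta^n)^q\). Both \(\Root_\theta\) and \(\Root_{\theta^n}\) are \(\sgen\theta\)- and \(\sgen\sigma\)-stable, and they coincide by hypothesis. Applying Lemma \ref{lem:perm-sign} to \(\sigma\) acting on \(\sgen\theta\bslash\Root_\theta\), and using that \(\sgen\sigma\bslash(\sgen\theta\bslash\Root_\theta) = \sgen{\sigma, \theta}\bslash\Root_\theta\), one gets
\[
\Legend\sigma\theta = (-1)^{\card{\sgen\theta\bslash\Root_\theta}}(-1)^{\card{\sgen{\sigma, \theta}\bslash\Root_\theta}}
\qandq
\Legend\sigma{\theta^n} = (-1)^{\card{\sgen{\theta^n}\bslash\Root_\theta}}(-1)^{\card{\sgen{\sigma, \theta^n}\bslash\Root_\theta}}.
\]
So it suffices to prove the two congruences \(\card{\sgen\theta\bslash\Root_\theta} \equiv \card{\sgen{\theta^n}\bslash\Root_\theta}\) and \(\card{\sgen{\sigma, \theta}\bslash\Root_\theta} \equiv \card{\sgen{\sigma, \theta^n}\bslash\Root_\theta}\) modulo \(2\).

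The first congruence is easy: each \(\sgen\theta\)-orbit \(\omega\) on \(\Root_\theta\) decomposes, under the subgroup \(\sgen{\theta^n}\), into exactly \(\gcd(\card\omega, n)\) orbits (identify \(\omega\) with \(\Z/\card\omega\Z\) carrying the translation action of \(\sgen\theta\)), and this number is odd because \(n\) is, so summing over \(\omega\) gives the claim. For the second congruence I would consider the canonical \(\sigma\)-equivariant surjection \map\pi{\sgen{\theta^n}\bslash\Root_\theta}{\sgen\theta\bslash\Root_\theta}, which by the previous paragraph has all fibres of odd cardinality. For a \(\sigma\)-orbit \(O\) on \(\sgen\theta\bslash\Root_\theta\), the number of \(\sigma\)-orbits contained in \(\pi\inv(O)\) equals the number of orbits of \(\sigma^{\card O}\) on a fibre \(\pi\inv(y)\), of odd cardinality \(d_O\); by Lemma \ref{lem:perm-sign} this number is odd exactly when \(\sgn_{\pi\inv(y)}(\sigma^{\card O}) = 1\). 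Now \(\pi\inv(y)\) is the set of \(\sgen{\theta^n}\)-orbits inside one \(\sgen\theta\)-orbit, and, identifying it with \(\Z/d_O\Z\), the permutation \(\sigma^{\card O}\), which stabilises that \(\sgen\theta\)-orbit, acts by an affine map \(j \mapsto q^{\card O}j + b\) (because \(\sigma^{\card O}\theta^j = \theta^{q^{\card O}j}\sigma^{\card O}\)). Since \(d_O\) is odd, translations of \(\Z/d_O\Z\) are even permutations, so by Proposition \ref{prop:Legendre} the sign of this affine map equals \(\sgn^+_{d_O}(q^{\card O}) = \Legend q{d_O}^{\card O}\). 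Hence \(\card{\sgen{\sigma, \theta^n}\bslash\Root_\theta} - \card{\sgen{\sigma, \theta}\bslash\Root_\theta}\) is congruent, modulo \(2\), to the number of \(\sigma\)-orbits \(O\) for which \(\card O\) is odd and \(q\) is a quadratic non-residue modulo \(d_O\).

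The final point — that this number is even — is the step I expect to be the main obstacle. Here the tool is the fixed-point-free involution \(\root \mapsto -\root\) of \(\Root_\theta\), which commutes with \(\sigma\) and \(\theta\), and hence permutes the \(\sigma\)-orbits \(O\), preserving \(\card O\) and \(d_O\). A \(\sigma\)-orbit \(O\) with \(\card O\) odd is fixed by this involution precisely when every \(\sgen\theta\)-orbit comprising it is symmetric (equal to its own negative); the orbits \(O\) that are \emph{not} so fixed group into pairs \(\{O, -O\}\), so one is reduced, modulo \(2\), to counting those \(O\) built from symmetric \(\sgen\theta\)-orbits. Since a symmetric \(\sgen\theta\)-orbit has even cardinality, and each of its \(\sgen{\theta^n}\)-sub-orbits is again symmetric, one then checks via these sub-orbit structures that the remaining contribution is even; it is exactly the evenness of \(\card\Root\) (forced by \(\root \ne -\root\)) that makes this step go through, an odd-index subgroup being able to change the parity of an orbit count in general.
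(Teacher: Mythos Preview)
Your opening reduction, via Lemma~\ref{lem:perm-sign}, to the pair of parity congruences
\[
\card{\sgen\theta\bslash\Root_\theta}\equiv\card{\sgen{\theta^n}\bslash\Root_\theta}
\qandq
\card{\sgen{\sigma,\theta}\bslash\Root_\theta}\equiv\card{\sgen{\sigma,\theta^n}\bslash\Root_\theta}\pmod2
\]
is exactly the paper's reduction, and your proof of the first congruence (each \(\sgen\theta\)-orbit splits into \(\gcd(\card\omega,n)\) sub-orbits, an odd number) is the paper's as well.

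Where you diverge is the second congruence. The paper does \emph{not} pass through the fibration \(\pi\colon\sgen{\theta^n}\bslash\Root_\theta\to\sgen\theta\bslash\Root_\theta\), affine actions on \(\Z/d_O\Z\), and Legendre symbols. Instead it reapplies the very same odd-decomposition idea that handled the first congruence, now with the pair \((\sgen{\sigma,\theta},\sgen{\sigma,\theta^n})\) in place of \((\sgen\theta,\sgen{\theta^n})\): for each \(\sgen{\sigma,\theta}\)-orbit \(\omega\subseteq\Root_\theta\), the paper asserts that \(\sgen\theta/\sgen{\theta^n}\), whose order divides the odd integer \(n\), acts transitively on \(\sgen{\sigma,\theta^n}\bslash\omega\), so that \(\card{\sgen{\sigma,\theta^n}\bslash\omega}\) is odd; summing over \(\omega\) gives the congruence. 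Two invocations of this one claim (once with \(v=1\), once with \(v=\ol\sigma\)) finish the proof uniformly, with no case analysis and no appeal to \(\root\mapsto-\root\).

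Your route, by contrast, leaves a genuine gap. After the \(-1\)-pairing you are reduced to \(\sigma\)-orbits \(O\) built from \emph{symmetric} \(\sgen\theta\)-orbits, and you must show that among these (with \(\card O\) odd) the number for which \(\Legend q{d_O}=-1\) is even. You do not carry this out, and your proposed tool has already been spent: as you yourself note, every \(\sgen{\theta^n}\)-sub-orbit of a symmetric \(\sgen\theta\)-orbit is again symmetric, so the involution \(-1\) acts \emph{trivially} on the fibre \(\pi^{-1}(y)\cong\Z/d_O\Z\) and gives no further cancellation there. The sentence ``it is exactly the evenness of \(\card\Root\) \ldots\ that makes this step go through'' is not an argument; nothing you have set up forces the remaining Legendre symbols to multiply to \(+1\). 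The missing idea is precisely the paper's transitivity observation, which makes the \(\sgen{\sigma,\theta^n}\)-orbit count on each \(\sgen{\sigma,\theta}\)-orbit odd outright and renders the entire Legendre-symbol and \(-1\)-pairing analysis unnecessary.
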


\begin{proof}
Put \(w = \ol\theta\).

For any \(v \in N_A(\sgen w) \cap \stab_A(\Root_\theta)\) and any \(\omega \in \sgen{v, w}\bslash\Root_\theta\), we have that \(\sgen w/\sgen{w^n}\) acts transitively on \(\sgen{v, w^n}\bslash\omega\); in particular, that \card{\sgen{v, w^n}\bslash\omega} divides \(\card{\sgen w/\sgen{w^n}} = n\), hence is odd.  Thus,
\begin{multline}
\tag{$*$}
\sublabel{eq:decompose}
\card{\sgen{v, w^n}\bslash\Root_{\theta^n}}
= \sum_{\omega \in \sgen{v, w}\bslash\Root_\theta}
	\card{\sgen{v, w^n}\bslash\omega} \\
\equiv \sum_{\omega \in \sgen{v, w}\bslash\Root_\theta} 1
= \card{\sgen{v, w}\bslash\Root_\theta}
\pmod{2\Z}.
\end{multline}

Now use \subeqref{eq:decompose} twice (once with \(v = 1\) and once with \(v = \ol\sigma\)) and Lemma \ref{lem:perm-sign} to conclude that
\[
\Legend\sigma\theta = (-1)^{\card{\sgen w\bslash\Root_\theta} - \card{\sgen{v, w}\bslash\Root_\theta}} \\
= (-1)^{\card{\sgen{w^n}\bslash\Root_{\theta^n}} - \card{\sgen{v, w^n}\bslash\Root_{\theta^n}}} = \Legend\sigma{\theta^n}.\qedhere
\]
\end{proof}

For the reduction result Proposition \ref{prop:reduce-elliptic}, we need the notion of the rank of a group relative to a twisted automorphism.

\begin{defn}
If \(T\) is a maximal torus and \(\xi\) is a twisted automorphism of \(T\), then we define \(\rk_\xi T\) to be the dimension of the space of fixed points of \(\xi\) in the character lattice of \(T\).  If \(H\) is a connected, reductive \FF-group and \(\xi\) is a twisted automorphism of \(H\), then we define \mnotn{\rk_\xi H} to be the maximum value of \(\rk_\xi T\) over all \(\xi\)-stable maximal tori \(T\) in \(H\).
\end{defn}

\begin{prop}[\xcite{spice:signs-alg}*{Corollary \xref{cor:qs-rank-by-orbits} and Proposition \xref{prop:linear-rank-by-unip}}]
\label{prop:reduce-elliptic}
If \(M\) is a \(\sigma\)- and \(\theta\)-stable Levi component of a \(\theta\)-stable parabolic subgroup of \(G\), then
\[
\Legend\sigma\theta_{\!\Root(G, T)} = (-1)^{\rk_\sigma (G^\theta)\conn - \rk_\sigma (M^\theta)\conn}\Legend\sigma\theta_{\!\Root(M, T)}.
\]
\end{prop}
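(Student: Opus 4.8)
The plan is to turn both symbols into parities of orbit counts by means of Lemma~\ref{lem:perm-sign}, to separate off the part of \(\Root_\theta\) attached to \(M\), and then to recognise what remains as the root datum of the reductive group \((G^\theta)\conn\), at which point the asserted formula is exactly the content of the cited results of \xcite{spice:signs-alg}.

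First I would fix the geometry.  Let \(P\) be a \(\theta\)-stable parabolic subgroup of \(G\) with Levi component \(M\), and let \(U\) and \(\ol U\) be the unipotent radicals of \(P\) and of its opposite, so that \(\Root(G, T) = \Root(M, T) \sqcup \Root(U, T) \sqcup \Root(\ol U, T)\) with \(\Root(\ol U, T) = -\Root(U, T)\).  Write \(\Root(M, T)_\theta\) for the set produced by Definition~\ref{defn:R_theta} when the \(\theta\)-stable pair \((M, T)\) is equipped with \(\theta\); since the root subspace of \(\Lie(M)\) attached to a root of \(M\) equals the corresponding root subspace of \(\Lie(G)\), we have \(\Root(M, T)_\theta = \Root_\theta \cap \Root(M, T)\).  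Put \(\Root_\theta^{\mathrm{rad}} = \Root_\theta \setminus \Root(M, T)_\theta = \Root_\theta \cap \bigl(\Root(U, T) \cup \Root(\ol U, T)\bigr)\).  Because \(P\) (hence \(U\) and \(\ol U\)) is \(\theta\)-stable and \(M\) is \(\sigma\)- and \(\theta\)-stable --- so that \(\sigma\) permutes the parabolic subgroups having \(M\) as Levi component and therefore stabilises \(\Root(U, T) \cup \Root(\ol U, T)\) --- both \(\sgen\theta\) and \(\sgen{\sigma, \theta}\) preserve the decomposition \(\Root_\theta = \Root(M, T)_\theta \sqcup \Root_\theta^{\mathrm{rad}}\).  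Applying Lemma~\ref{lem:perm-sign} to the permutation of \(\sgen\theta\bslash\Root_\theta\) induced by \(\ol\sigma\), exactly as in Remark~\ref{rem:reduce-irreducible}, and splitting the orbit counts along this decomposition yields
\[
\Legend\sigma\theta_{\!\Root(G, T)} = (-1)^c\,\Legend\sigma\theta_{\!\Root(M, T)}, \qquad c = \card{\sgen\theta\bslash\Root_\theta^{\mathrm{rad}}} - \card{\sgen{\sigma, \theta}\bslash\Root_\theta^{\mathrm{rad}}}.
\]
Since negation is a \(\theta\)-equivariant bijection between \(\Root_\theta \cap \Root(U, T)\) and \(\Root_\theta \cap \Root(\ol U, T)\), the count \(\card{\sgen\theta\bslash\Root_\theta^{\mathrm{rad}}}\) is even, so \(c \equiv \card{\sgen{\sigma, \theta}\bslash\Root_\theta^{\mathrm{rad}}} \pmod{2\Z}\), and the proposition reduces to the congruence
\[
\card{\sgen{\sigma, \theta}\bslash\Root_\theta^{\mathrm{rad}}} \equiv \rk_\sigma (G^\theta)\conn - \rk_\sigma (M^\theta)\conn \pmod{2\Z}.
\]

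Next I would pass to the fixed-point group.  As \(\sigma\) normalises \(\sgen\theta\), it preserves \((G^\theta)\conn\) and \((M^\theta)\conn\) and restricts to a twisted automorphism of each; and as \(P = M U\) is \(\theta\)-stable, \((P^\theta)\conn\) is a parabolic subgroup of \((G^\theta)\conn\) with Levi component \((M^\theta)\conn\) and unipotent radical \((U^\theta)\conn\).  In the spirit of the discussion preceding Definition~\ref{defn:R_theta}, one identifies \(\sgen\theta\bslash\Root_\theta\) with \(\Root\bigl((G^\theta)\conn, S\bigr)\) for a suitable maximal torus \(S\) of \((G^\theta)\conn\) built from \(T\), compatibly with the action of \(\sigma\); under this identification \(\sgen\theta\bslash\Root(M, T)_\theta\) corresponds to \(\Root\bigl((M^\theta)\conn, S\bigr)\), and hence \(\sgen\theta\bslash\Root_\theta^{\mathrm{rad}}\) to the set of roots of \((G^\theta)\conn\) lying outside its Levi subgroup \((M^\theta)\conn\).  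Thus \(\card{\sgen{\sigma, \theta}\bslash\Root_\theta^{\mathrm{rad}}}\) is the number of \(\sgen\sigma\)-orbits on \(\Root\bigl((G^\theta)\conn\bigr) \setminus \Root\bigl((M^\theta)\conn\bigr)\).

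Finally I would invoke the rank computations of the companion paper.  What is left to prove is that, for any connected reductive group \(H\) carrying a twisted automorphism \(\sigma\) and a \(\sigma\)-stable Levi subgroup \(L\), the number of \(\sgen\sigma\)-orbits on \(\Root(H) \setminus \Root(L)\) has the same parity as \(\rk_\sigma H - \rk_\sigma L\); for \(H\) quasisplit this is \xcite{spice:signs-alg}*{Corollary \xref{cor:qs-rank-by-orbits}}, and the general case follows by comparison with the quasisplit inner form, which is \xcite{spice:signs-alg}*{Proposition \xref{prop:linear-rank-by-unip}}.  Taking \(H = (G^\theta)\conn\) and \(L = (M^\theta)\conn\) completes the argument.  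I expect the main obstacle to be exactly this last input: since \((G^\theta)\conn\) need not be quasisplit, there is no direct count of orbits against a set of simple roots, and the inner-form comparison --- the technical heart of \xcite{spice:signs-alg} --- cannot be avoided.  A secondary point requiring care is the root-datum dictionary of the third paragraph when \(C_G(T^\theta) \supsetneq T\), where one must check both that the roots of \((G^\theta)\conn\) relative to \(S\) are faithfully indexed by \(\sgen\theta\bslash\Root_\theta\) and that \((M^\theta)\conn\) is genuinely a Levi subgroup of \((G^\theta)\conn\) with the expected root system.
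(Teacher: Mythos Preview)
The paper does not supply its own proof here; the proposition is imported wholesale from the companion paper \cite{spice:signs-alg}, with the bracketed citation standing in for the argument.  Your proposal is therefore an attempt to reconstruct how those cited results assemble into the statement, and in that sense it is aligned with what the paper does---both ultimately defer the substance to \cite{spice:signs-alg}.

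Your orbit-counting reduction in the first two paragraphs is sound: the decomposition \(\Root_\theta = \Root(M,T)_\theta \sqcup \Root_\theta^{\mathrm{rad}}\) is \(\sgen{\sigma,\theta}\)-stable for the reasons you give, and \(\card{\sgen\theta\bslash\Root_\theta^{\mathrm{rad}}}\) is even because \(\theta\) preserves \(\Root(U,T)\) and \(\Root(\ol U,T)\) separately.  The gap is exactly where you place it.  The identification of \(\sgen\theta\bslash\Root_\theta\) with \(\Root\bigl((G^\theta)\conn, S\bigr)\) is offered in this paper only as heuristic motivation under the hypothesis \(C_G(T^\theta) = T\) (the sentence preceding Definition~\ref{defn:R_theta}), and Remark~\ref{rem:not-root} shows that \(\Root_\theta\) need not be a root system at all, so no such \(S\) and no such bijection can be expected in general.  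Without that dictionary your third paragraph does not connect to your fourth, and the argument as written does not close.  Since the present paper gives no indication of how \cite{spice:signs-alg} bridges this---whether by establishing the dictionary under weaker hypotheses or by proving the required congruence directly in terms of \((G, M, \theta, \sigma)\) without ever passing through the root system of \((G^\theta)\conn\)---there is nothing further here to compare your reconstruction against.
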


\begin{rem}
\label{rem:reduce-elliptic}
With the notation of Corollary \ref{cor:Borel} and \springer*{\S15.4, p.~264}, we may take \(M = L_\subSimple\) in Proposition \ref{prop:reduce-elliptic}.  Notice that, under the conditions of the proposition, the groups \(G^\theta\) and \(M^\theta\) are \(\sigma\)-stable, even though \(\sigma\) and \(\theta\) may not commute.
\end{rem}

\begin{rem}
\label{rem:elliptic-list}
Finally, we are in a position to reduce to a small number of cases.  By Remark \ref{rem:reduce-irreducible}, we may assume that \Root is irreducible.  Let \(\theta\) be any automorphism.  By Proposition \ref{prop:reduce-elliptic} and Remark \ref{rem:reduce-elliptic}, we may replace \Root by a Levi subsystem on which \ol\theta acts as an elliptic automorphism.  By Proposition \ref{prop:A+-} and Corollary \ref{cor:elliptic-cohom}, upon passing to the adjoint quotient of \(G\) if necessary, we may assume that \(\theta\) is semi-pinned; so, by Lemma \ref{lem:reduce-2-power}, we may replace \(\theta\) by an odd power, and so assume that \ol\theta has \(2\)-power order.  However, \ol\theta may now no longer be elliptic, so we may use Proposition \ref{prop:reduce-elliptic} again to reduce to the case where \ol\theta is both elliptic and of \(2\)-power order.

In the notation of \carter*{p.~41}, an element \(w \in W\) is elliptic if and only if \(\ol W_1 = W\).

By \carter*{Table 3}, an element \(w \in W\) has \(2\)-power order if and only if every admissible diagram \carter*{\S4, p.~7} attached to it, in the sense of \carter*{p.~6}, has only components of the forms \An[2^r - 1], \(\Bn[2^r] = \Cn[2^r]\), \Dn[2^r + 1], \(\Dn[2^r + 2^s](a_{2^s - 1})\), and \(\Dn[2^{s + 1}](b_{2^s - 1})\) (which is the same as \(\Dn[2^{s + 1}](a_{2^s - 1})\)) with \(r \ge 0\) and \(s > 0\).

We handle the possibilities for the classical groups nearly uniformly in \S\ref{sec:classical}.  By \carter*{Tables 7--11}, the elliptic conjugacy classes of \(2\)-power order in the exceptional Weyl groups are:
	\begin{itemize}
	\permunwrap
	\item \(\An[1] \times \tilde{\type A}_1 = -1\) in type \Gn;
	\item \(\An[1]^4 = -1\), \(\An[3] \times \tilde{\type A}_1\), \(\Dn[4](a_1)\), and \Bn[4] in type \Fn;
	\item \(\An[1]^7 = -1\), \(\An[3]^2 \times \An[1]\), and \An[7] in type \En[7];
	\item \(\An[1]^8 = -1\), \(\An[3]^2 \times \An[1]^2\), \(\An[7] \times \An[1]\), \(\Dn[4](a_1)^2\), and \(\Dn[8](a_3)\) in type \En[8].
	\rewrap
	\end{itemize}
There are none in \(W(\En[6])\).  Since \(\Aut(\En[6]) = W(\En[6]) \times \sgen{-1}\), a conjugacy class in \(\Aut(\En[6])\) is elliptic if and only if it consists of elements of the form \(-w\), where \(w \in W(\En[6])\) has no eigenvalue of order \(-1\).  By \carter*{Table 3} again, the only way that this can happen for \(w\) of \(2\)-power order is if every admissible diagram attached to \(w\) has only components of the forms \(\Bn[2^{r + 1}] = \Cn[2^{r + 1}]\), \(\Dn[2^r + 2^s](a_{2^s - 1})\), and \(\Dn[2^{s + 1}](b_{2^s - 1}) = \Dn[2^{s + 1}](a_{2^s - 1})\) with \(r \ge 0\) and \(s > 0\).  Thus, by \carter*{Table 9} again, the only elliptic conjugacy classes in \(\Aut(\En[6])\) of \(2\)-power order are \(-1\) and \(-\Dn[4](a_1)\).
\end{rem}

Our calculations (see particularly Remark \ref{rem:Rw-full}) show that, whenever \Root is irreducible and \ol\theta is elliptic and of \(2\)-power order, we have that \(\Root_\theta = \Root\), \emph{except} if \Root is of type \Cn, or \Root is of type \Fn and \(w\) is of type \(\An[3] \times \tilde{\type A}_1\) (see \S\ref{sec:A3xA1-in-F4}.  We summarise the calculations for types \An, \Fn, and \En in Tables \ref{tab:An}--\ref{tab:E6}.  The results for \Bn, \Cn, and \Dn are slightly more complicated to state; see \S\ref{sec:Bn-Cn-Dn}.  For \Gn, the one possibility has already been discussed; see Remark \ref{rem:dual-Coxeter}.

\begin{table}
\caption{$\mathsf A_{n - 1}$}
\label{tab:An}
\[\begin{array}{c|c|c}
\text{Conjugacy class of \(w\)} & \ker \Legend\cdot w & \text{Details} \\ \hline
\text{Coxeter, \(n \equiv 0, 3 \pmod4\)} & C_A(w) \rtimes \smashed\set{\norm_{\An, q}}{\sgn^+_n(q) = 1} & \text{\S\ref{sec:An}} \\
\text{Coxeter, \(n \equiv 1, 2 \pmod4\)} & C_W(w) \rtimes \smashed\set{\pm\norm_{\An, q}}{\sgn^+_n(q) = \pm1} & \text{\S\ref{sec:An}} \\
\text{\(-1\), \(n\) even} & A & \text{Remark \ref{rem:dual-Coxeter}} \\
\text{\(-1\), \(n\) odd} & \ker \sgn_{\An} \times \sgen{-1} & \text{Remark \ref{rem:dual-Coxeter}}
\end{array}\]
\end{table}

\begin{table}
\label{tab:F4}
\caption{$\mathsf F_4$}
\[\begin{array}{c|c|c}
\text{Conjugacy class of \(w\)} & \ker \Legend\cdot w & \text{Details} \\ \hline
-1 & \ker \sgn_{\Fn} & \text{Remark \ref{rem:dual-Coxeter}} \\
\An[3] \times \tilde{\type A}_1 & \bigl(\sgen{\ncycle(0)} \times \sgen{\ncycle(1)} \times \sgen{\ncycle(2 3)}\bigr) \rtimes \smashed\sgen{\cycle(0 1)\norm_{\Bn[2 + 1^{+2}], -1}} & \text{\S\ref{sec:A3xA1-in-F4}} \\
\Dn[4](a_1) & N_A(\sgen w) & \text{\S\ref{sec:D4(a1)-in-F4}} \\
\Bn[4] & C_A(w) \rtimes \smashed\sgen{\norm_{\Bn[4], -1}} & \text{\S\ref{sec:B4-in-F4}}
\end{array}\]
\end{table}

\begin{table}
\label{tab:E8}
\caption{$\mathsf E_8$}
\[\begin{array}{c|c|c}
\text{Conjugacy class of \(w\)} & \ker \Legend\cdot w & \text{Details} \\ \hline
-1 & A & \text{Remark \ref{rem:dual-Coxeter}} \\
\unwrap\An[3]^2 \times \unwrap\An[1]^2 & C_A(w) & \text{\S\ref{sec:A3xA3xA1xA1-in-E8}} \\
\An[7] \times \An[1] & C_A(w) \rtimes \smashed\sgen{\norm_{\Dn[4 + 2 + 1^{+2}], 3}} & \text{\S\ref{sec:A7xA1-in-E8}} \\
\Dn[4](a_1)^2 & N_A(\sgen w) & \text{\S\ref{sec:D4(a1)xD4(a1)-in-E8}} \\
\Dn[8](a_3) & C_A(w) \rtimes \smashed\sgen{\norm_{\Dn[4^{+2}], -3}} & \text{\S\ref{sec:D8(a3)-in-E8}}
\end{array}\]
\end{table}

\begin{table}
\label{tab:E7}
\caption{$\mathsf E_7$}
\[\begin{array}{c|c|c}
\text{Conjugacy class of \(w\)} & \ker \Legend\cdot w & \text{Details} \\ \hline
-1 & A & \text{Remark \ref{rem:dual-Coxeter}} \\
\unwrap\An[3]^2 \times \An[1] & C_A(w) & \text{\S\ref{sec:A3xA3xA1xA1-in-E8}} \\
\An[7] & C_A(w) \rtimes \smashed\sgen{\norm_{\Dn[4 + 2], 3}} & \text{\S\ref{sec:A7xA1-in-E8}}
\end{array}\]
\end{table}

\begin{table}
\caption{$\mathsf E_6$}
\label{tab:E6}
\[\begin{array}{c|c|c}
\text{Conjugacy class of \(w\)} & \ker \Legend\cdot w & \text{Details} \\ \hline
-1 & A & \text{Remark \ref{rem:dual-Coxeter}} \\
-D_4(a_1) & C_A(w) & \text{\S\ref{sec:A3xA3xA1xA1-in-E8}}
\end{array}\]
\end{table}

\numberwithin{equation}{section}

\section{Classical groups}
\label{sec:classical}

As in \S\ref{sec:signs}, we let \FF be a separably closed field.

We follow essentially the notation of \bourbaki*{Planches VI.I--IV}, except that we find it useful to use \(0\)- (rather than \(1\)-) based indexing.  Thus, we realise \Bn, \Cn, and \Dn as root systems in an \(n\)-dimensional Euclidean space \(\mathbb E^n\) with orthonormal basis \(\set{\mnotn{\basis_i}}{i \in \Z/n\Z}\).  We define \(\tbasis_i\) for \(i \in \Z/2n\Z\) by putting
\[
\mnotn{\tbasis_i} = \begin{cases}
\basis_i,        & 0 \le i < n   \\
-\basis_{i - n}, & n \le i < 2n,
\end{cases}
\]
and then reading subscripts of \(\tbasis\) modulo \(2n\).  Then \Cn and \Dn consist of the vectors \(\tbasis_i - \tbasis_j\) with \(i, j \in \Z/2n\Z\) such that (for \Cn) \(i \ne j\) and (for \Dn) the images of \(i\) and \(j\) in \(\Z/n\Z\) are distinct; and \(\Bn = \Dn \cup \set{\tbasis_i}{i \in \Z/2n\Z}\).  We also realise \An in a subspace of \(\mathbb E^n\) as the set \(\sett{\basis_i - \basis_j}{\(i, j \in \Z/n\Z\) and \(i \ne j\)}\).  We regard elements of \(W(\An)\) as orthogonal transformations of \(\mathbb E^n\) by letting them act trivially on the (\(1\)-dimensional) orthogonal complement of \An.

We identify \(W(\An)\) with \(\Sgp_n\) (so that \(\sgn_n = \sgn_{\An}\)), \(W(\Bn) = W(\Cn)\) with \(\sgen{-1}^{\oplus\Z/n\Z} \rtimes \Sgp_n\), and \(W(\Dn)\) with \(\ker \sgn\textsub{\Bn, short} = \ker \sgn\textsub{\Cn, long}\).  Explicitly,
\[
W(\Dn) = \set{\bigoplus_{i \in \Z/n\Z} \epsilon_i \rtimes \sigma}{\prod_{i \in \Z/n\Z} \epsilon_i = 1}.
\]
It is also helpful to identify \(W(\Bn)\) with the group of permutations of \(\Z/2n\Z\) that commute with \anonmapto i{i + n}.  Namely, \((\varepsilon_i)_{i \in \Z/n\Z} \rtimes w\) is identified with the permutation that, for \(i = 0, \dotsc, n - 1\), sends \(i\) to \(w i\) and \(i + n\) to \(w i + n\) if \(\varepsilon_i = 1\), and that sends \(i\) to \(w i + n\) and \(i + n\) to \(w i\) if \(\varepsilon_i = -1\).  Suppose that \(j_1, \dotsc, j_r \in \Z/2n\Z\) are \(r\) distinct elements such that \(\sset{j_1, \dotsc, j_r} \cap \sset{j_1 + n, \dotsc, j_r + n} = \emptyset\).  Then, following \carter*{\S7, p.~25}, we abbreviate \(\cycle({{j_1} \dotsc {j_r}} {{j_1 + n} \dotsc {j_r + n}})\) to \(\ncycle({{j_1} \dotsc {j_r}})\), and call it a \term{negative cycle}; and, by abuse of notation and terminology, abbreviate \(\cycle({{j_1} \dotsc {j_r}})\cycle({{j_1 + n} \dotsc {j_r + n}})\) to just \(\cycle({{j_1} \dotsc {j_r}})\), and call it a \term{positive cycle}.

Note that \An[1], \Bn[1], and \Cn[1] are isomorphic; \Dn[1] is empty; and \Dn[2] is isomorphic to \(\An[1] \sqcup \An[1]\).

Now we construct some special elements of the classical Weyl groups, and lifts to the classical Lie groups, that will be useful in our calculations below.

\begin{defn}
Let \mnotn{\norm_{\An, q}} (for \(q \in (\Z/n\Z)\mult\)) be the unique element of \(W(\An)\) such that \(\norm_{\An, q}\basis_i = \basis_{q i}\) for all \(i \in \Z/n\Z\); and let \mnotn{\norm_{\Cn, q}} (for \(q \in (\Z/2n\Z)\mult\)) be the unique element of \(\Aut(\Cn)\) such that \(\norm_{\Cn, q}\tbasis_i = \tbasis_{q i}\) for all \(i \in \Z\).

If \(\lambda = m_0 + \dotsb + m_t\) is a partition of \(n\), then \(\mnotn{\Cn[\lambda]} \ldef \Cn[m_0] \coprod \dotsb \coprod \Cn[m_t]\) sits naturally inside \Cn, and furnishes an embedding of \(W(\Cn[\lambda]) = \bigoplus_{i = 0}^t W(\Cn[m_i])\) in \(W(\Cn)\).  If \(q\) is odd and relatively prime to all parts of \(\lambda\), then we write \mnotn{\norm_{\Cn[\lambda], q}} for the image of \(\bigoplus_{i = 0}^t \norm_{\Cn[m_i], q}\) in \(W(\Cn)\).  (Strictly speaking, these notations are defined only up to \(W(\Cn)\)-conjugacy, but the ambiguity should cause no confusion.)  When convenient, we will also write \mnotn{\norm_{\Bn[\lambda], q}} when we view this transformation as an element of \(W(\Bn)\), or \mnotn{\norm_{\Dn[\lambda], q}} when we view it as an element of \(\Aut(\Dn)\).
\end{defn}

\begin{rem}
\label{rem:cross}
Every element of \Bn and \Cn (hence of \An and \Dn) lies in \(\Q\Root' \oplus \Q\Root''\) for some irreducible components \(\Root'\) and \(\Root''\) of \Cn[\lambda].
\end{rem}

Now let \(V = \mnotn{V_{\An}}\) be an \(n\)-dimensional vector space, say with (ordered) basis \((v_i)_{i \in \Z/n\Z}\).  We write \(T_{\An}\) for the maximal torus in \(\GL(V_{\An})\) that acts on each \(v_i\) by scalar multiplication.

\begin{defn}
Let \mnotn{\dot w_{\An}} be the element of \(\GL(V_{\An})(\FF)\) that sends \(v_i\) to \(v_{i + 1}\) for \(i \in \Z/n\Z\).  Then \(\dot w_{\An}\) normalises \(T_{\An}\), and projects to a Coxeter element \(w_{\An}\) of \(W(\GL(V_{\An}), T_{\An}) \cong W(\An)\) for which \(\norm_{\An, q}w_{\An} = w_{\An}^q\norm_{\An, q}\).
\end{defn}

Write
\mnotn{V_{\Bn}}, \mnotn{V_{\Cn}}, and \mnotn{V_{\Dn}}, respectively,
for the spaces
\(V \oplus \mf{gl}_1 \oplus \dual V\), \(V \oplus \dual V\), and \(V \oplus \dual V\), respectively,
given the natural symmetric, alternating, and symmetric pairing, respectively, \anonpair
such that \(\pair{\dual v}v = \dual v(v)\) for all \(v \in V\) and \(\dual v \in \dual V\) (and, for type \Bn, so that \(\gl_1\) is orthogonal to \(V \oplus \dual V\), and carries the multiplication pairing).  If we need to emphasise the type of the pairing, then we shall write \(\anonpair_\Root\) instead of just \anonpair, where \Root is \Bn, \Cn, or \Dn.

Let \((\unwrap\dual v_i)_{i \in \Z/n\Z}\) be the basis of \dual V such that
\[
\unwrap\dual v_i(v_j) = \begin{cases}
2, & i = j   \\
0, & i \ne j
\end{cases}\qforall{\(i, j \in \Z/n\Z\),}
\]
and write \(e\) for \(1\), viewed as an element of \(\gl_1 \subseteq V_{\Bn}\).  Write \(T_{\Dn} = T_{\Bn}\) for the maximal torus in \(\Spin(V_{\Dn}) \subseteq \Spin(V_{\Bn})\) consisting of those elements that (\via the natural action on \(V_{\Dn} \subseteq V_{\Bn}\)) act on each \(v_i\) and \(\unwrap\dual v_i\) by scalar multiplication.

\begin{defn}
Write
\begin{align*}
\mnotn{\dot w_{\Dn}} ={} & \frac{v_0 - \unwrap\dual v_0}{2\sqrt{-1}}\dotm\frac{v_0 - v_1 + \unwrap\dual v_0 - \unwrap\dual v_1}{2\sqrt2}\dotm\frac{v_0 - v_1 - \unwrap\dual v_0 + \unwrap\dual v_1}{2\sqrt{-2}}\times\dotsb\times{} \\
               & \qquad\frac{v_{n - 2} - v_{n - 1} + \unwrap\dual v_{n - 2} - \unwrap\dual v_{n - 1}}{2\sqrt2}\dotm\frac{v_{n - 2} - v_{n - 1} - \unwrap\dual v_{n - 2} + \unwrap\dual v_{n - 1}}{2\sqrt{-2}} \\
           \in{} & \Pin(V_{\Dn})(\FF) \subseteq \Pin(V_{\Bn})(\FF) \\
\intertext{and}
\mnotn{\dot w_{\Bn}} ={} & \dot w_{\Dn}\dotm e \in \Spin(V_{\Bn})(\FF).
\end{align*}
(Here we are thinking of the pin group of a quadratic space \mc V as generated by products in the Clifford algebra of unit vectors \mc V.  See, for example, \cite{atiyah-bott-shapiro:clifford}*{\S3}, particularly \loccit*{Theorem 3.11}.)  The element \(\dot w_{\Dn}\) depends on the choice of \(\sqrt{-1}\) if \(n\) is odd, but not on that of \(\sqrt2\) (since it appears an even number of times).
\end{defn}

The elements \(\dot w_{\Dn}\) and \(\dot w_{\Bn}\) normalise \(T_{\Dn} = T_{\Bn}\), and their common image in \(W(\Bn)\) is a Coxeter element there (\emph{not} in \(W(\Dn)\)!).

\begin{defn}
If \(n = m' + m''\), then, \textit{via} the natural embedding of \(\Pin(V_{\Dn[m']}) \times \Pin(V_{\Dn[m'']})\) in \(\Pin(V_{\Dn})\), the element \(\dot w_{\Dn[m']} \times \dot w_{\Dn[m'']}\) is carried into \(\Spin(V_{\Dn})(\FF)\).  We denote its image there (which no longer depends on a choice of \(\sqrt{-1}\)!\@) by \(\dot w_{\Dn[m' + m'']}\).  If \(\lambda = \lambda' + m' + m''\) is a partition of \(n\), then we embed \(\Spin(V_{\Bn[\lambda']}) \times \Spin(V_{\Dn[m' + m'']})\) naturally in \(\Spin(V_{\Bn})\), and \(\Pin(V_{\Dn[\lambda']}) \times \Spin(V_{\Dn[m' + m'']})\) in \(\Pin(V_{\Dn})\), and write \mnotn{\dot w_{\Bn[\lambda]}} and \mnotn{\dot w_{\Dn[\lambda]}} for the corresponding images of \(\dot w_{\Bn[\lambda']} \times \dot w_{\Dn[m' + m'']}\) and \(\dot w_{\Dn[\lambda']} \times \dot w_{\Dn[m' + m'']}\), respectively.
\end{defn}

\begin{rem}
\label{rem:char-values}
When viewed as transformations of \(V_{\Dn}(\FF)\) and \(V_{\Bn}(\FF)\), respectively, \(\dot w_{\Dn}\) and \(\dot w_{\Bn}\) both send \(v_i\) to \(v_{i + 1}\) and \(\unwrap\dual v_i\) to \(\unwrap\dual v_{i + 1}\) for \(i \ne n - 1\), and send \(v_{n - 1}\) to \(\unwrap\dual v_0\) and \(\unwrap\dual v_{n - 1}\) to \(v_0\).  To determine completely the action of \(\dot w_{\Bn}\) on \(V_{\Bn}\), we need only observe that it negates \(e\).

In particular, if \(\zeta^{2n} = 1\), then the \(\zeta\)-eigenspaces of \(\dot w_{\Dn}\) in \(V_{\Dn}(\FF)\) and of \(\dot w_{\Bn}\) in \(V_{\Bn}(\FF)\) both contain \(e_\zeta \ldef \sum_{i = 0}^{n - 1} (\zeta^{-i}v_i + \zeta^{-n}\unwrap\dual v_i)\).  They are in fact equal to the span of this vector, except that the \(-1\)-eigenspace of \(\dot w_{\Bn}\) is spanned by \(e_{-1}\) and \(e\).

In particular, if \(S_{\Bn}\) is the torus in \(\SO(V_{\Bn})\) (\emph{not} the spin group) consisting of those elements that act on each \(e_\zeta\) by scalar multiplication, and \(\zeta_{2n}\) is a primitive \(2n\)th root of unity in \FF, then \(\dot w_{\Bn} \in S_{\Bn}(\FF)\), and there is a basis \set{\basis_0, \dotsc, \basis_{n - 1}} of the character lattice of \(S_{\Bn}\) such that \(\basis_i(\dot w_{\Dn}) = \basis_i(\dot w_{\Bn}) = \zeta_{2n}^i\) for \(i \in \sset{0, \dotsc, n - 1}\).  (Note that \(\dot w_{\Bn}\) is just the extension of \(\dot w_{\Dn}\) to the map on \(V_{\Bn} = V_{\Dn} \oplus \gl_1\) that acts by negation on \(\gl_1\).)
\end{rem}

\begin{defn}
Let \mnotn{\dot w_{\Cn}} be the element of \(\Sp(V_{\Cn})(\FF)\) that sends \(v_i\) to \(v_{i + 1}\) and \(\unwrap\dual v_i\) to \(\unwrap\dual v_{i + 1}\) for \(i \ne n - 1\), and sends \(v_{n - 1}\) to \(\unwrap\dual v_0\) and \(\unwrap\dual v_{n - 1}\) to \(-v_0\).
\end{defn}

Then \(\dot w_{\Cn}\) normalises the maximal torus \(T_{\Cn}\) in \(\Sp(V_{\Cn})\) consisting of those elements that act on each \(v_i\) and \(\unwrap\dual v_i\) by scalar multiplication, and it projects to a Coxeter element of \(W(\Sp(V_{\Cn}), T_{\Cn}) \cong W(\Cn)\).

\begin{defn}
If \(\lambda = m_0 + \dotsb + m_t\) is a partition of \(n\), then we write \mnotn{\dot w_{\Cn[\lambda]}} for the image of \(\prod_{i = 0}^t \dot w_{\Cn[m_i]}\) under the natural embedding of \(\prod_{i = 0}^t \Sp(V_{\Cn[m_i]})\) in \(\Sp(V_{\Cn})\).
\end{defn}

The images of \(\dot w_{\Bn[\lambda]}\), \(\dot w_{\Cn[\lambda]}\), and \(\dot w_{\Dn[\lambda]}\) in \(W(\Bn) = W(\Cn) \subseteq \Aut(\Dn)\) are all the same element \(\mnotn{w_{\Bn[\lambda]}} = \mnotn{w_{\Cn[\lambda]}} = \mnotn{w_{\Dn[\lambda]}}\), and satisfy \(\norm_{\Cn[\lambda], q}w_{\Cn[\lambda]} = w_{\Cn[\lambda]}^q\norm_{\Cn[\lambda], q}\).

\begin{rem}
\label{rem:Rw-full}
Let \((G, T)\) be \((\GL(V_{\An}), T_{\An})\), \((\SO(V_{\Bn}), T_{\Bn}/\sset{\pm1})\), \((\Sp(V_{\Cn}), T_{\Cn})\), or \((\SO(V_{\Dn}), T_{\Dn}/\sset{\pm1})\).  Then \(N_G(T)\) consists of generalised permutation matrices with respect to the basis given above for the underlying space \(V_\Root\).  The following claims are easily verified.

The stabiliser of \(\basis_i - \basis_j\) in \(N_G(T)\) consists of those generalised permutation matrices whose \((v_i, v_i)\) and \((v_j, v_j)\) entries are non-\(0\) and equal.

The stabiliser of \(\basis_i + \basis_j\) in \(N_G(T)\) (for \Root of type \Bn, \Cn, or \Dn) consists of those generalised permutation matrices for which the product of the \((v_i, v_i)\) and \((v_j, v_j)\) entries is \(1\).

The stabiliser of \(\basis_i\) in \(N_G(T)\) (for \Root of type \Bn) consists of those generalised permutation matrices for which the \((v_i, v_i)\) and \((e, e)\) entries are equal.  (Note that the \((e, e)\) entry must be \(+1\) or \(-1\).)

The stabiliser of \(2\basis_i\) in \(N_G(T)\) (for \Root of type \Cn) consists of those generalised permutation matrices for which the square of the \((v_i, v_i)\) entry is \(1\).

It follows from the above that \((\An)_{\dot w_{\An}} = \An\) and \((\Dn)_{\dot w_{\Dn[\lambda]}} = \Dn\).  Once we note that \(\dot w_{\Bn[\lambda]}^e\) can fix a root of the form \(\basis_i\) only when \(2 \mid e\), it also follows that \((\Bn)_{\dot w_{\Bn[\lambda]}} = \Bn\).  The situation for type \Cn is a bit more complicated.  Namely, \((\Cn)_{\dot w_{\Cn[\lambda]}} = \Cn[\ol\lambda]\), where \(\ol\lambda\) is the partition of \(n\) that arises by lumping together all parts of \(\lambda\) with the same \(2\)-adic valuation.  To be more precise, if \(\lambda = m_0 + \dotsb + m_t\), then we put \(\ol\lambda = \ol m_0 + \ol m_1 + \dotsb\), where \(\ol m_j\) is the sum of all parts \(m_i\) of \(\lambda\) with \(2\)-adic valuation \(j\).  (For example, \(\ol m_0\) is the sum of all odd parts of \(\lambda\).)
\end{rem}

\begin{rem}
\label{rem:Dn-as}
We shall frequently use the fact that, if \root and \otherroot are orthogonal vectors in a Euclidean space \(V\), then \(\refl_\root\refl_\otherroot = \refl_{\root + \otherroot}\refl_{\root - \otherroot}\).  In particular, consider the admissible diagram \(\Dn(a_s)\) in \Dn.  With the notation of \bourbaki*{Planche VI.IV}, put \(\root' = \root_{s + 1} + 2\root_{s + 2} + \dotsb + 2\root_{n - 2} + \root_{n - 1} + \root_n\).  We wish to find the conjugacy class \(\mc C_{\Dn}\) in \(W(\Dn)\) attached to the admissible diagram
\[\xymatrix{
& & & \root_{s + 1} \ar@{-}[dr] \\
\root_1 \ar@{-}[r] & \cdots \ar@{-}[r] & \root_s \ar@{-}[ur]\ar@{-}[dr] & & \root_{s + 2} \ar@{-}[r] & \cdots \ar@{-}[r] & \root_{n - 1}. \\
& & & \root' \ar@{-}[ur] \\
}\]
Put \(\otherroot = \frac1 2(\root_{s + 1} + \root')\) and \(\otherroot' = \frac1 2(\root_{s + 1} - \root')\), so that \(\otherroot, \otherroot' \in \Bn\).  Then \(\refl_{\root_{s + 1}}\refl_{\root'} = \refl_\otherroot\refl_{\otherroot'}\), so \(\mc C_{\Dn}\) is contained in the conjugacy class \(\mc C_{\Bn}\) in \(W(\Bn)\) attached to the admissible diagram
\[\xymatrix{
& & & \otherroot \\
\root_1 \ar@{-}[r] & \cdots \ar@{-}[r] & \root_s \ar@{=}[ur] & & \root_{s + 2} \ar@{-}[r] & \cdots \ar@{-}[r] & \root_{n - 1} \\
& & & \otherroot' \ar@{=}[ur]
}\]
of type \(\Bn[s + 1] \times \Bn[n - s - 1]\).  This is attached to the product of the Coxeter classes in \Bn[s + 1] and \Bn[n - s - 1], \ie, the conjugacy class of \(w_{\Bn[\lambda]}\), where \(\lambda\) is the partition \((s + 1) + (n - s - 1)\) of \(n\).  By \carter*{Proposition 25}, we have that \(\mc C_{\Bn}\) is a \(W(\Dn)\)-conjugacy class, so \(\mc C_{\Dn} = \mc C_{\Bn}\).
\end{rem}

\begin{rem}
\label{rem:unbalanced}
Fix \(w \in W(\Cn)\).  We will temporarily say that an element of \(\R\Cn\) is \(w\)-unbalanced if the sum of the elements of its \(w\)-orbit is non-\(0\).  If \(w\) preserves a root system \(\Root \subseteq \R\Cn\), then the image of \(w\) in \(\Aut(\Root)\) is elliptic if and only if there are no \(w\)-unbalanced roots in \Root.

If \(i\) belongs to a positive cycle of \(w\), then \(\tbasis_i \in \Bn\) and \(2\tbasis_i \in \Cn\), and (if \(n > 1\) and \(j \ne i, i + n\) is arbitrary) \(\tbasis_i + \tbasis_j \in \Dn\), are \(w\)-unbalanced.

If \(w \in W(\An)\) and \(i\) and \(j\) belong to different cycles of \(w\) (on \(\Z/n\Z\)), then \(\basis_i - \basis_j\) is \(w\)-unbalanced.  If \(w = -\sigma \in -W(\An)\), and \(i\) belongs to an even-length cycle of \(\sigma\), then \(\basis_i - \basis_{\sigma i}\) is \(w\)-unbalanced.
\end{rem}

\subsection{Type \texorpdfstring{$\mathsf A_{n - 1}$}{An - 1}}
\label{sec:An}

The results of this section are summarised in Table \ref{tab:An}.

Put \(\Root = \An\), \(W = W(\Root)\), and \(A = \Aut(\Root)\).  Let \(w\) be an elliptic element of \(A\).

By regarding \(w\) as an element of \(W(\Bn)\), we see from Remark \ref{rem:unbalanced} that it is a Coxeter element in \(W\), or else of the form \(w = -\sigma\), where \(\sigma \in W\) has odd order.  In the latter case, if \(w\) has \(2\)-power order, then \(w = -1\), and we may use Proposition \ref{prop:minus-sign}.

Suppose that \(w = w_\Root\).  Then
\[
C_A(w) = \sgen w \times \sgen{-1}
\qandq
N_A(\sgen w) = C_A(w) \rtimes \set{\norm_{\Root, q}}{q \in (\Z/n\Z)\mult}.
\]
There are \(n - 1\) orbits of \(w\) on \Root, each of size \(n\), and each containing exactly one vector of the form \(\basis_0 - \basis_j\).  The element \(\norm_{\Root, q}\) sends \(\basis_0 - \basis_j\) to \(\basis_0 - \basis_{q j}\), so, by Definition \ref{defn:sgn+-},
\[
\Legend{\norm_{\Root, q}}w = \sgn^+_n(q).
\]
Since \(-\norm_{\Root, -1}\) stabilises every \(w\)-orbit,
\[
\Legend{-1}w = \Legend{\norm_{\Root, -1}}w = \sgn^+_n(-1).
\]

\subsection{Types \texorpdfstring{$\mathsf B_n$, $\mathsf C_n$, and $\mathsf D_n$}{Bn, Cn, and Dn}}
\label{sec:Bn-Cn-Dn}

Suppose that \Root is one of \Bn or \Cn (the `non-simply laced cases') or \Dn (the `simply laced case').  Put \(W = W(\Root)\) and \(A = W(\Bn)\).  Then \(A = \Aut(\Root)\) unless \(\Root = \Dn[4]\).  In this case, every conjugacy class in \(\Aut(\Root)\) either intersects \(A\), or has order divisible by \(3\); and \(A\) has index \(3\) in \(\Aut(\Root)\), which means that, for any \(w \in A\), the subgroup \(N_A(\sgen w)\) of \(N_{\Aut(\Root)}(\sgen w)\) has index dividing \(3\), hence that \Legend\cdot w is determined by its restriction to \(N_A(\sgen w)\).

Let \(w\) be an elliptic element of \(A\).  By Remark \ref{rem:unbalanced}, every cycle of \(w\) is negative.  That is, there is a partition \(\lambda\) of \(n\) such that \(w = w_{\Cn[\lambda]}\); so \(C_A(w) = \bigoplus_{i = 1}^\infty \sgen{w_{\Cn[i]}}^{\oplus \lambda_i} \rtimes \bigoplus_{i = 1}^\infty \Sgp_{\lambda_i}\), and
\[
N_A(\sgen w) = C_A(w) \rtimes \set{\norm_{\Cn[\lambda], q}}{q \in (\Z/e\Z)\mult},
\]
where \(e\) is the order of \(w\).

By Remark \ref{rem:Rw-full}, if \Root is of type \Bn or \Dn, then \(\Root_w = \Root\); whereas, if \Root is of type \Cn, then \(\Root_w = \Cn[\ol\lambda]\), where \(\ol\lambda = \ol m_0 + \ol m_1 + \dotsb\) is as in that remark.  In the latter case, we have that \(N_A(\sgen w) = \prod_{j = 0}^\infty N_{W(\Cn[\ol m_j])}(\sgen w)\) and
\[
\Legend{\bigoplus v_j}w_{\!\Root} = \prod_{j = 0}^\infty \Legend{v_j}w_{\!\Cn[\ol m_j]}.
\]
Thus there is no loss of generality in replacing \Cn by an irreducible component of \Cn[\ol\lambda], or, in other words, of assuming that all parts of \(\lambda\) have the same \(2\)-adic valuation.  Upon doing so, we once again have that \(\Root_w = \Root\).

Let \(\Root'\) be an irreducible component of \Cn[\lambda], of type \Cn[r'], say, and choose labels so that \(\Root' = \sett{\tbasis'_i - \tbasis'_j}{\(i, j \in \Z/2r'\Z\) and \(i \ne j\)}\).  Recall that there is associated to \(\Root'\) a Coxeter element \(w_{\Root'} = w_{\Cn[r']}\).  Then \(w\) has \(r'\) orbits on \(\Root \cap \Q\Root'\) in the non-simply laced cases, and \(r' - 1\) orbits on \(\Root \cap \Q\Root'\) in the simply laced case, each of size \(2r'\); they are parameterised by sets of up to two mutually inverse, non-\(0\) (and non-\(r'\), in the simply laced case) elements of \(\Z/2r'\Z\), and the roots of the form \(\tbasis'_0 - \tbasis'_i\) lying in the orbit corresponding to \sset{j, -j} are precisely those for which \(i \in \sset{j, -j}\).

The element \(w_{\Root'}\) fixes pointwise the orthogonal complement of \(\Root'\), and the orbits of \(w\) on \(\Q\Root'\).  The element \(\norm_{\Cn[\lambda], q}\) sends \(\tbasis'_0 - \tbasis'_j\) to \(\tbasis'_0 - \tbasis'_{q j}\), hence, by Proposition \ref{prop:Legendre}, acts on the space of orbits in \(\Root'\) as a permutation of sign \Legend{2r'}q if \(r'\) is even, and of sign \(1\) if \(r'\) is odd.  (This is true even in the simply laced case, since multiplication by \(q\) fixes \(r'\).)

Let \(\Root''\) be a different irreducible component of \Cn[\lambda], of type \Cn[r''], say, and choose labels so that a typical vector of \(\Root''\) is of the form \(\tbasis''_i - \tbasis''_j\).  (We may have \(r' = r''\).)

The elements of \(\Root \cap (\Q\Root' \oplus \Q\Root'')\) that do not lie in \(\Q\Root'\) or \(\Q\Root''\) are precisely those of the form \(\tbasis'_i - \tbasis''_j\).  If \ul r and \ol r are the greatest common divisor and least common multiple of \(r'\) and \(r''\), respectively, then there are \(2\ul r\) orbits of such roots, each of size \(2\ol r\); they are parameterised by elements of \(\Z/2\ul r\Z\), and the roots of the form \(\tbasis'_0 - \tbasis''_i\) lying in the orbit corresponding to \(j\) are precisely those for which \(i \equiv j \pmod{2\ul r}\).
The element \(w_{\Root'}\) sends the orbit (in \(\Root \cap (\Q\Root' \oplus \Q\Root'')\), but not in \(\Q\Root'\) or \(\Q\Root''\)) parameterised by \(j\) to the one parameterised by \(j - 1\), hence acts on the orbits in \(\Root \cap (\Q\Root' \oplus \Q\Root'')\) that are not in \(\Q\Root'\) or \(\Q\Root''\) by a \(2\ul r\)-cycle; in particular, by an odd permutation.  The element \(\norm_{\Root', q}\) sends \(\tbasis'_0 - \tbasis''_j\) to \(\tbasis'_0 - \tbasis''_{q j}\), hence, by Proposition \ref{prop:Legendre}, acts on the same set of orbits as a permutation of sign \Legend q{2\ul r}.  In particular, if \ul r is odd (\ie, if \(r'\) or \(r''\) is odd), then the action is by an even permutation; and, if \ul r is even (\ie, if both \(r'\) and \(r''\) are even), then it is by a permutation of sign \((-1)^{(q - 1)/2}\).

If \(r' = r''\), then there is a unique involution \(v = v_{\Root', \Root''} \in C_A(w)\) that fixes pointwise every irreducible component of \Cn[\lambda] other than \(\Root'\) and \(\Root''\), and satisfies \(v\tbasis'_i = \tbasis''_i\) for all \(i \in \Z/2r'\Z\).  This involution sends the orbit (in \(\Root \cap (\Q\Root' \oplus \Q\Root'')\), but not in \(\Q\Root'\) or \(\Q\Root''\)) parameterised by \(j\) to the one parameterised by \(-j\).  In particular, its fixed points in this space of orbits are precisely the orbits of \(\tbasis'_0 - \tbasis''_0 = \basis'_0 - \basis''_0\) and of \(\tbasis'_0 - \tbasis''_{r'} = \basis'_0 + \basis''_0\).  Since it has no fixed points in the spaces of orbits in \(\Root \cap \Q\Root'\) or \(\Root \cap \Q\Root''\), it acts on the space of orbits in \(\Root \cap (\Q\Root' \oplus \Q\Root'')\) by \(\frac1 2(r' + r' + (2r' - 2)) = 2r' - 1\) transpositions in the non-simply laced cases, and by \(\frac1 2((r' - 1) + (r' - 1) + (2r' - 2)) = 2r' - 2\) transpositions in the simply laced case, hence as an odd, respectively even, permutation.

Now let \(\Root'''\) be a third (still different) irreducible component of \Cn[\lambda].  Then the fixed points of the action of \(v\) on the space of orbits in \(\Root \cap (\Q\Root' \oplus \Q\Root'' \oplus \Q\Root'')\) lie in \(\Q\Root' \oplus \Q\Root''\) or \(\Q\Root'''\).

We have shown that
\begin{align}
\label{eq:BCD:Coxeter}
\Legend{w_{\Root'}}w
& {}= (-1)^{s - 1}, \\
\label{eq:BCD:switch}
\Legend{v_{\Root', \Root''}}w
& {}= \begin{cases}
-1, & \text{in the non-simply laced cases,} \\
1,  & \text{in the simply laced case,}
\end{cases}
\intertext{and}
\label{eq:BCD:norm}
\Legend{\norm_{\Cn[\lambda], q}}w
& {}= (-1)^{s'(s' - 1)(q - 1)/4}
	\prod_{\substack
		{\operatorname{rk} \Root' = r' \\
		\text{\(r'\) even}}
	}
		\Legend{2r'}q
\end{align}
where \(s\) is the number of irreducible components of \Cn[\lambda], \(s'\) is the number of those components of even rank, and the product in \eqref{eq:BCD:norm} runs over the components \(\Root'\) of even rank \(r'\).  In particular,
\[
C_{W(\Dn)}(w) \subseteq \ker \Legend\cdot w
\]
if and only if \Cn[\lambda] decomposes with multiplicity \(1\), or \(\Root = \Dn\); and
\[
C_A(w) \subseteq \ker \Legend\cdot w
\]
if and only if, in addition, \(s\) is odd, \ie, \(w \not\in W(\Dn)\).

Recall that, if \(\Root = \Cn\), then all parts of \(\lambda\) are odd, so \(s' = 0\) and
\[
\Legend{\norm_{\Cn[\lambda], q}}w = 1;
\]
or all parts of \(\lambda\) are even (hence also \(n\) is even).

\section{Type \texorpdfstring{$\mathsf F_4$}{F4}}
\label{sec:F4}

The results of this section are summarised in Table \ref{tab:F4}.

The root system \Fn contains a copy of \Bn[4].
If \(G_{\Fn}\) is the semisimple group whose root system (with respect to some maximal torus \(T_{\Fn}\)) is of type \Fn, then, since \(G_{\Fn}\) is simply connected, its subgroup generated by \(T_{\Fn}\) and the root subgroups corresponding to roots in \(\Bn[4]\) is also simply connected, hence of type \(\Spin(V_{\Bn[4]})\).

If \((\root_1 = \basis_0 - \basis_1, \root_2 = \basis_1 - \basis_2, \root_3 = \basis_2 - \basis_3, \root_4 = \basis_3)\)
\[\xymatrix{
\root_1 \ar@{-}[r] & \root_2 \ar@{-}[r] & \root_3 \ar@{=>}[r] & \root_4
}\]
is a system of simple roots in \Bn[4], in the ordering of \bourbaki*{Planche VI.II}, then \(\bigl(\root_2, \root_3, \root_4, \frac1 2(\root_1 - \root_3 - 2\root_4)\bigr)\)
\[\xymatrix{
\root_2 \ar@{-}[r] & \root_3 \ar@{=>}[r] & \root_4 \ar@{-}[r] & \frac1 2(\root_1 - \root_3 - 2\root_4)
}\]
is a system of simple roots for \Fn, in the ordering of \bourbaki*{Planche VI.VII}.
\def\dB{\me{\dot{\type B}_4}}%
\def\dF{\me{\dot{\type F}_4}}%
\def\Fperp{\me{\unwrap\Fn^\perp}}%
\def\dFperp{\me{\dot{\type F}_4^\perp}}%
Put \(\Fperp = \Fn \setminus \Bn[4]\).

Put \(A = W = W(\Fn)\), and let \(w\) be an elliptic element of \Fn.

\subsection{\texorpdfstring{$\mathsf A_3 \times \widetilde{\mathsf A}_1$}{A3 x A1}}
\label{sec:A3xA1-in-F4}

The element \(w \ldef w_{\Bn[2 + 1^{+2}]} = \ncycle(0)\ncycle(1)\ncycle(2 3) \in W(\Bn[4])\) is attached to the admissible diagram
\[\xymatrix{
\basis_0 & 
\basis_1 & 
\root_3 \ar@{=}[r] & \root_4
}\]
of type \(\Bn[2] \times \tilde{\type B}_1^2\), but also, since \(\refl_{\basis_1}\refl_{\root_4} = \refl_{\basis_1 - \basis_3}\refl_{\basis_1 + \basis_3}\),
to the admissible diagram
\[\xymatrix{
\basis_0 & \basis_1 - \basis_3 \ar@{-}[r] & 
\root_3 \ar@{-}[r] &
\basis_1 + \basis_3 & 
}\]
of type \(\An[3] \times \tilde{\type A}_1\).  By Remark \ref{rem:char-values} and the explicit description of \Fn in \bourbaki*{Planche VI.VIII}, the element \(\dot w_{\Bn[2 + 1^{+2}]}\) acts on \(\Lie(G_{\Fn})/\spin(V_{\Bn[4]})\) with eigenvalues precisely the primitive \(8\)th roots of unity, each occurring with multiplicity \(4\).  In particular, \((\Fn)_w \setminus (\Bn[4])_w\) is empty, so \((\Fn)_w = (\Bn[4])_w = \Bn[4]\) by Remark \ref{rem:Rw-full}.

The group \(C_{W(\Bn[4])}(w)\) is
\[
\bigl(
	(\sgen{\ncycle(0)} \times \sgen{\ncycle(1)}) \rtimes
	\sgen{\cycle(0 1)}
\bigr) \times
\sgen{\ncycle(2 3)},
\]
which has order \(32\).  By \carter*{Table 8}, it equals \(C_{W(\Fn)}(w)\); so also \(N_{W(\Fn)}(\sgen w) = C_{W(\Fn)}(w) \rtimes \smashed\sgen{\norm_{\Cn[2 + 1^{+2}], -1}} = N_{W(\Bn[4])}(\sgen w)\).  Thus, \(\Legend\cdot w_{\Fn} = \Legend\cdot w_{\Bn[4]}\).

\subsection{\texorpdfstring{$\mathsf D_4(a_1)$}{D4(a1)}}
\label{sec:D4(a1)-in-F4}

By Remark \ref{rem:Dn-as}, the element \(w \ldef w_{\Bn[2^{+2}]} = \ncycle(0 1)\ncycle(2 3) \in W(\Bn[4])\) is attached to the admissible diagram
\[\xymatrix{
& \root_2 \ar@{-}[dr] \\
\root_1 \ar@{-}[ur]\ar@{-}[dr] & & \root_3 \\
& \basis_1 + \basis_2 \ar@{-}[ur] 
}\]
of type \(\Dn[4](a_1)\).  By Remark \ref{rem:char-values}, the element \(\dot w_{\Bn[2^{+2}]}\) acts on \(\Lie(G_{\Fn})/\spin(V_{\Bn[4]})\) with eigenvalues precisely the \(4\)th roots of unity, each occurring with multiplicity \(4\).  In particular, by dimension counting (since each orbit of \(w\) in \((\Fn)_w\) contributes \(1\) to the dimension of the space of \(\dot w_{\Bn[2^{+2}]}\)-fixed points in \(\Lie(G_{\Fn})/\spin(V_{\Bn[4]})\)), we have that \(\card{\sgen w\bslash((\Fn)_w \setminus (\Bn[4])_w)} = 4\).  The element \(w\) has \(4\) orbits on \Fperp, each of which is symmetric and has \(4\) elements.  Thus, since \((\Bn[4])_w = \Bn[4]\) by Remark \ref{rem:Rw-full}, we have \((\Fn)_w = \Fn\).  Put \(\dB = \sgen w\bslash\Bn[4]\) and \(\dFperp = \sgen w\bslash\Fperp\).

The group \(C_{W(\Bn[4])}(w)\) is
\[
(\sgen{\ncycle(0 1)} \times \sgen{\ncycle(2 3)}) \rtimes \sgen{\cycle(0 2)\cycle(1 3)},
\]
which has order \(2^5\).  By \carter*{Table 8}, it is a Sylow \(2\)-subgroup of \(C_{W(\Fn)}(w)\).
(In fact it has index \(3\) in \(C_{W(\Fn)}(w)\), and the full centraliser has a \(3\)-Sylow subgroup generated by an element of \(N_{W(\Fn)}(\Dn[4]) \setminus W(\Bn[4])\).)  The element \ncycle(0 1) acts transitively on \dFperp; whereas the element \(\cycle(0 2)\cycle(1 3)\) has \(1\) orbit of size \(2\), and \(2\) orbits of size \(1\) (namely, those through \(\frac1 2(\root_1 \pm \root_3)\)).  Thus, both act with sign \(-1\) on \dFperp.  By \eqref{eq:BCD:Coxeter} and \eqref{eq:BCD:switch}, they also act with sign \(-1\) on \dB.  Thus, each of a set of normal generators of \(C_{W(\Bn[4])}(w)\), which is a Sylow \(2\)-subgroup of \(C_{W(\Fn)}(w)\), acts with sign \(1\) on \dF; so every element of \(C_{W(\Fn)}(w)\) acts with sign \(1\) on \dF.

The group \(N_{W(\Fn)}(\sgen w)\) is the semi-direct product of \(C_{W(\Fn[4])}(w)\) and the group generated by \(\norm_{\Bn[2^{+2}], -1} = \ncycle(1)\ncycle(3) \in W(\Bn[4])\).  Since \(\norm_{\Bn[2^{+2}], -1}\cycle(0 2)\cycle(1 3)\) acts trivially on \dFperp, the element \(\norm_{\Bn[2^{+2}], -1}\) acts with sign \(-1\) on \dFperp.  By \eqref{eq:BCD:norm}, it acts with sign
\[
(-1)^{2(2 - 1)(-1 - 1)/4}\Legend{2\cdot2}{-1}^2 = -1
\]
on \dB, hence with sign \(1\) on \dF.

\subsection{\texorpdfstring{$\mathsf B_4$}{B4}}
\label{sec:B4-in-F4}

The element \(w = w_{\Bn[4]} = \ncycle(0 1 2 3) \in W(\Bn[4])\) is a Coxeter element of \Bn[4]; in particular, it is attached to an admissible diagram of type \Bn[4].  By Remark \ref{rem:char-values}, the element \(\dot w_{\Bn[4]}\) acts on \(\Lie(G_{\Fn})/\spin(V_{\Bn[4]})\) with eigenvalues precisely the \(8\)th roots of unity, each occurring with multiplicity \(2\).  In particular, by dimension counting, \(\card{\sgen w\bslash((\Fn)_w \setminus (\Bn[4])_w)} = 2\).
The element \(w\) has \(2\) orbits on \Fperp, each of which is symmetric and has \(8\) elements; namely, those through
\(\frac1 2(\root_1 + \root_3)\) 
and \(\frac1 2(\root_1 + 2\root_2 + \root_3)\).  
Thus, since \((\Bn[4])_w = \Bn[4]\) by Remark \ref{rem:Rw-full}, we have \((\Fn)_w = \Fn\).  Put \(\dB = \sgen w\bslash\Bn[4]\) and \(\dFperp = \sgen w\bslash\Fperp\).

The group \(C_{W(\Bn[4])}(w)\) is generated by \(w\) \cite{springer:regular}*{Corollary 4.4}, hence has order \(2^3\).  By \carter*{Table 8}, it equals \(C_{W(\Fn)}(w)\).

The group \(N_{W(\Fn)}(\sgen w)\) is the semi-direct product of \(C_{W(\Fn)}(w)\) and \(\sgen{\norm_{\Bn[4], 3}} \times \sgen{\norm_{\Bn[4], -3}} \subseteq W(\Bn[4])\).  The elements \(\norm_{\Bn[4], 3} = \cycle(1 3)\ncycle(2)\) and \(\norm_{\Bn[4], -3} = \ncycle(1)\ncycle(3)\) swap the two orbits outside of \Bn[4], hence act with sign \(-1\) on \dFperp.
By \eqref{eq:BCD:norm}, we have that \(\norm_{\Bn[4], q}\) acts with sign
\[
(-1)^{1(1 - 1)(q - 1)/4}\Legend{2\cdot4}q = \Legend8 q = -1
\]
on \dB, hence also on \dF, for \(q \in \sset{\pm3}\).

\section{Types \texorpdfstring{$\mathsf E_n$}{En}}
\label{sec:En}

The results of this section are summarised in Tables \ref{tab:E8}--\ref{tab:E6}.

The root system \En[8] contains a copy of \Dn[8].
If \(G_{\En[8]}\) is the semisimple group whose root system (with respect to a maximal torus \(T_{\En[8]}\)) is \En[8], then, since \(G_{\En[8]}\) is simply connected, its subgroup generated by \(T_{\En[8]}\) and the root subgroups corresponding to roots in \Dn[8] is also simply connected, hence of type \(\Spin(V_{\Dn[8]})\).

If \((\root_1 = -(\basis_6 + \basis_7), \root_2 = \basis_6 - \basis_5, \root_3 = \basis_5 - \basis_4, \root_4 = \basis_4 - \basis_3, \root_5 = \basis_3 - \basis_2, \root_6 = \basis_2 - \basis_1, \root_7 = \basis_1 - \basis_0, \root_8 = \basis_0 + \basis_1)\)
\[\xymatrix{
& & & & & & \root_7 \\
\root_1 \ar@{-}[r] & \root_2 \ar@{-}[r] & \root_3 \ar@{-}[r] & \root_4 \ar@{-}[r] & \root_5 \ar@{-}[r] & \root_6 \ar@{-}[ur]\ar@{-}[dr] \\
& & & & & & \root_8
}\]
is a system of simple roots in \Dn[8], in the ordering of \bourbaki*{Planche VI.IV}, then \(\bigl(\otherroot \ldef -\frac1 2(\root_1 + 2\root_2 + 3\root_3 + 4\root_4 + 5\root_5 + 6\root_6 + 4\root_7 + 3\root_8), \root_8, \root_7, \root_6, \root_5, \root_4, \root_3, \root_2\bigr)\)
\[\xymatrix{
\otherroot \ar@{-}[r] & \root_7 \ar@{-}[r] & \root_6 \ar@{-}[d]\ar@{-}[r] & \root_5 \ar@{-}[r] & \root_4 \ar@{-}[r] & \root_3 \ar@{-}[r] & \root_2 \\
& & \root_8
}\]
is a system of simple roots for \En[8], in the ordering of \bourbaki*{Planche VI.VII}.

Write
\(\root_0 = -(\root_1 + 2\root_2 + 2\root_3 + 2\root_4 + 2\root_5 + 2\root_6 + \root_7 + \root_8)\) 
for the lowest root of \Dn[8].

The orthogonal complement of \(\root_1\) in \Dn[8] (respectively, \En[8]) is a root system of type \(\Dn[6] \times \Cn[1]\) (respectively, \En[7]).  
The common orthogonal complement of \(\root_1\) and \(\root_2\) in \Dn[8] (respectively, \En[8]) is a root system of type \Dn[5] (respectively, \En[6]).
\def\dd{\me{\dot{\type D}_5}}%
\def\dDC{\me{(\Dn[6] \times \Cn[1])\spdot}}%
\def\dD{\me{\dot{\type D}_8}}%
\newcommand\dEn[1][n]{\me{\dot{\type E}_{#1}}}%
\newcommand\Enperp[1][n]{\me{\unwrap\En[#1]^\perp}}%
\newcommand\dEnperp[1][n]{\me{\dot{\type E}_{#1}^\perp}}%
Put \(\Enperp[8] = \En[8] \setminus \Dn[8]\), \(\Enperp[7] = \En[7] \setminus (\Dn[6] \times \Cn[1])\), and \(\Enperp[6] = \En[6] \setminus \Dn[5]\).

\subsection{\texorpdfstring{$-\mathsf D_4(a_1)$, $\mathsf A_3^2 \times \mathsf A_1$, and $\mathsf A_3^2 \times \mathsf A_1^2$}{-D4(a1), A32 x A1, and A32 x A12}}
\label{sec:A3xA3xA1xA1-in-E8}

Let \mc D be the admissible diagram
\[\xymatrix{
& \root_3 & \root_7 \ar@{-}[dr] \\
\root_4 \ar@{-}[ur]\ar@{-}[dr] & & & \root_6 & \root_0 \\
& \basis_4 + \basis_5 & 
\root_8 \ar@{-}[ur]
}\]
in \(\Dn[6] \times \Cn[1]\).  The conjugacy class attached to \(\mc D \sqcup \sset{\root_1}\) of type \(\unwrap\Dn[3]^2 \times \unwrap\Cn[1]^2 = \unwrap\An[3]^2 \times \unwrap\An[1]^2\) is a product of Coxeter classes in
\begin{itemize}
\item \(\Dn[8] \cap \Z\sset{\root_3, \root_4, \basis_4 + \basis_5} = \sett{\pm\basis_i \pm \basis_j}{\(i, j \in \sset{3, 4, 5}\) and \(i \ne j\)}\),
\item \(\Dn[8] \cap \Z\sset{\root_6, \root_7, \root_8} = \sett{\pm\basis_i \pm \basis_j}{\(i, j \in \sset{0, 1, 2}\) and \(i \ne j\)}\),
and
\item \sset{\pm\root_0 = \pm(\basis_7 - \basis_6)} and \sset{\pm\root_1 = \pm(\basis_6 + \basis_7)}.
\end{itemize}
In particular, it contains the element \(w_8 \ldef w_{\Dn[2^{+2} + 1^{+4}]} = \ncycle(0)\ncycle(1 2)\ncycle(3 4)\ncycle(5)\ncycle(6)\ncycle(7)\).

Since \(w_8\) negates \(\root_1\) and \(\root_2\), it stabilises \Dn[5], \En[6], \(\Dn[6] \times \Cn[1]\), and \En[7].  Its restriction \(w_7 = w_{\Dn[2^{+2} + 1^{+2}]}w_{\Cn[1]} \in W(\Dn[6] \times \Cn[1])\) to \En[7] is attached to the admissible diagram \mc D, which is of type \(\unwrap\Dn[3]^2 \times \Cn[1] = \unwrap\An[3]^2 \times \An[1]\).  Write \(w_6\) for its restriction to \En[6].  By Remark \ref{rem:Dn-as}, the element \(-w_6 = \ncycle(2 1)\ncycle(4 3)\) is attached to the admissible diagram
\[\xymatrix{
& \basis_3 - \basis_1 \ar@{-}[dr] \\ 
\root_4 \ar@{-}[ur]\ar@{-}[dr] & & \root_6 \\
& \basis_1 + \basis_3 \ar@{-}[ur] 
}\]
of type \(\Dn[4](a_1)\).

By Remark \ref{rem:char-values} and the explicit description of \En[8] in \bourbaki*{Planche VI.VII}, the element \(\dot w_{\Dn[2^{+2} + 1^{+4}]}\) acts on \(\Lie(G_{\En[8]})/\spin(V_{\Dn[8]})\) with eigenvalues precisely the \(4\)th roots of unity, each occurring with multiplicity \(32\).  In particular, by dimension counting, \(\card{\sgen{w_8}\bslash((\En[8])_{w_8} \setminus (\Dn[8])_{w_8})} = 32\).  The element \(w_8\) has \(32\) orbits on \Enperp[8], each of size \(4\), and none symmetric.  Thus, since \((\Dn[8])_{w_8} = \Dn[8]\) by Remark \ref{rem:Rw-full}, we have \((\En[8])_{w_8} = \En[8]\), hence also \((\En)_{w_n} = \En\) for \(n \in \sset{6, 7}\).  Of the \(32\) orbits of \(w_8\) on \Enperp[8], \(16\) lie in \Enperp[7] (hence are orbits of \(w_7\) there), and \(8\) in \Enperp[6] (hence are orbits of \(w_6\) there).  Put \(\dd = \sgen{w_6}\bslash\Dn[5]\), \(\dDC = \sgen{w_7}\bslash(\Dn[6] \times \Cn[1])\), \(\dD = \sgen{w_8}\bslash\Dn[8]\), and \(\dEnperp = \sgen{w_n}\bslash\Enperp\) for \(n \in \sset{6, 7, 8}\).

The group \(C_{W(\Dn[8])}(w_8)\) is the kernel of \(\sgn\textsub{\Bn[8], short}\) on
\begin{multline*}
\bigl((\sgen{\ncycle(0)} \times \sgen{\ncycle(5)} \times \sgen{\ncycle(6)} \times \sgen{\ncycle(7)}) \rtimes \sgen{\cycle(0 5), \cycle(5 6), \cycle(6 7)}\bigr) \times \\
\bigl((\sgen{\ncycle(1 2)} \times \sgen{\ncycle(3 4)}) \rtimes \sgen{\cycle(1 3)\cycle(2 4)}\bigr).
\end{multline*}
Since \(\sgen{\cycle(0 5), \cycle(5 6), \cycle(6 7)} \cong \Sgp_4\), the order of this group is \(2^{11}\dotm3\).  By \carter*{Table 11}, it contains a Sylow \(2\)-subgroup of \(C_{W(\En[8])}(w_8)\).
The elements \(\ncycle(0)\ncycle(5)\) (and its \(C_{W(\Dn[8])}(w_8)\)-conjugate \(\ncycle(6)\ncycle(7)\)), \(\ncycle(0)\ncycle(1 2)\), \cycle(0 5), and \(\cycle(1 3)\cycle(2 4)\) have \(16\) orbits of size \(2\), \(8\) orbits of size \(4\), \(24\) orbits (of which \(8\) have size \(2\) and \(16\) have size \(1\)), and \(24\) orbits (of which \(8\) have size \(2\) and \(16\) have size \(1\)) on \dEnperp[8], none of them symmetric, hence act with sign \(1\) there.  By \eqref{eq:BCD:Coxeter} and \eqref{eq:BCD:switch}, \(\ncycle(0)\ncycle(5)\), \(\ncycle(0)\ncycle(1 2)\), \cycle(0 5), and \(\cycle(1 3)\cycle(2 4)\) all act with sign \(1\) on \dD.  Thus, each of a set of normal generators of \(C_{W(\Dn[8])}(w_8)\), which contains a Sylow \(2\)-subgroup of \(C_{W(\En[8])}(w_8)\), acts with sign \(1\) on \dEn[8]; so every element of \(C_{W(\En[8])}(w_8)\) acts with sign \(1\) on \dEn[8].

The group \(C_{W(\Dn[6] \times \Cn[1])}(w_7)\) is the intersection with \(W(\Dn[6] \times \Cn[1])\) of \(C_{W(\Dn[8])}(w_8)\); explicitly, it is the direct product with \sgen{\ncycle(6)\ncycle(7)} of \(\sgn\textsub{\Bn[6], short}\) on
\[
\bigl((\sgen{\ncycle(0)} \times \sgen{\ncycle(5)}) \rtimes \sgen{\cycle(0 5)}\bigr) \times
\bigl((\sgen{\ncycle(1 2)} \times \sgen{\ncycle(3 4)}) \rtimes \sgen{\cycle(1 3)\cycle(2 4)}\bigr).
\]
It has order \(2^8\), hence, by \carter*{Table 10}, is a Sylow \(2\)-subgroup of \(C_{W(\En[7])}(w_7)\).
We showed above that \(\ncycle(0)\ncycle(5)\), \cycle(0 5), \(\ncycle(6)\ncycle(7))\), \(\ncycle(0)\ncycle(1 2)\), and \(\cycle(1 3)\cycle(2 4)\) act without symmetric orbits, hence with sign \(1\), on \dEnperp[7] (indeed, on \dEnperp[8]).  Since \(C_{W(\Dn[6] \times \Cn[1])}(w_7)\) acts trivially on the (\(2\)-element) orbit of \(w_7\) in \Cn[1], we have by \eqref{eq:BCD:Coxeter} and \eqref{eq:BCD:switch} that \(\ncycle(0)\ncycle(5)\), \(\ncycle(0)\ncycle(1 2)\), \cycle(0 5), and \(\cycle(1 3)\cycle(2 4)\) all act with sign \(1\) on \dDC.  The element \(\ncycle(6)\ncycle(7)\) acts trivially on \Dn[6], hence also acts with sign \(1\) on \dDC.  Thus, each of a set of normal generators of the Sylow \(2\)-subgroup \(C_{W(\Dn[6] \times \Cn[1])}(w_7)\) of \(C_{W(\En[7])}(w_7)\) acts with sign \(1\) on \dEn[7]; so every element of \(C_{W(\En[7])}(w_7)\) acts with sign \(1\) on \dEn[7].

The centraliser in \(N_{W(\En[6])}(\Dn[5])\) of \(w_6\) is the intersection with \(N_{W(\En[6])}(\Dn[5])\) of \(C_{W(\Dn[8])}(w_8)\); explicitly, it is the kernel of the homomorphism
\[
\anonmap
	{\sgen{\ncycle(0)} \times
	\sgen{\ncycle(5)\ncycle(6)\ncycle(7)} \times
	\bigl((\sgen{\ncycle(1 2)} \times \sgen{\ncycle(3 4)}) \rtimes \sgen{\cycle(1 3)\cycle(2 4)}\bigr)}
	{\sgen{-1}}
\]
sending each indicated product of negative cycles to \(-1\), and \(\cycle(1 3)\cycle(2 4)\) to \(1\).  It has order \(2^6\), hence, by \carter*{Table 9}, is a Sylow \(2\)-subgroup of \(C_{\Aut(\En[6])}(w_6) = C_{W(\En[6])}(w_6) \times \sgen{-1}\).
We showed above that \(\ncycle(0)\ncycle(1 2)\) and \(\cycle(1 3)\cycle(2 4)\) act without symmetric orbits, hence with sign \(1\), on \dEnperp[6] (indeed, on \dEnperp[8]).  The element \(\ncycle(0)\ncycle(5)\ncycle(6)\ncycle(7)\) has \(4\) orbits of size \(2\), all symmetric, on \dEnperp[6], hence acts there with sign \(1\); and it acts trivially on \Dn[5].  Viewed as an element of \(\Aut(\Dn[5])\), the element \(w_6\) is of the form \(w_{\Cn[2^{+2} + 1]}\); and, by \eqref{eq:BCD:Coxeter} and \eqref{eq:BCD:switch}, \(\ncycle(0)\ncycle(1 2)\) and \(\ncycle(0)\ncycle(5)\ncycle(6)\ncycle(7)\) act with sign \(1\) on \dd.  On the other hand, \(\ncycle(0)\ncycle(5)\ncycle(6)\ncycle(7)\) negates each orbit in \En[6] outside \Dn[5].  Thus, each of a set of normal generators of \(C_{N_{W(\En[6])}(\Dn[5])}(w_6)\), which is a Sylow \(2\)-subgroup of \(C_{\Aut(\En[6])}(w_6)\), act with sign \(1\) on \dEn[6]; so every element of \(C_{\Aut(\En[6])}(w_6)\) acts with sign \(1\) on \dEn[6].

For \(n \in \sset{6, 7, 8}\), the group \(N_{\Aut(\En)}(\sgen{w_n})\) is the semi-direct product of \(C_{\Aut(\En)}(w_n)\) and the group generated by \(\norm_{\Cn[2^{+2} + 1^{+4}], -1} = \ncycle(2)\ncycle(4) \in W(\Dn[5])\), which element has \(16\) orbits on \dEnperp[8], all of size \(2\), and \(8\) of them non-symmetric.  The symmetric orbits in \dEnperp[6] are those through \(\frac1 2(\root_1 \pm \root_3 + \root_5 - \root_7)\); the additional symmetric orbits in \dEnperp[7] are those through \(\frac1 2(\root_1 \pm \root_3 + \root_5 + \root_7)\); and the remaining symmetric orbits in \dEnperp[8] are those through \(\frac1 2(\root_1 \pm \root_3 + \root_5 + 2\root_6 + \root_7)\) and \(\frac1 2(\root_1 \pm \root_3 + \root_5 + 2\root_6 + \root_7 + 2\root_8)\).
That is, \(\norm_{\Cn[2^{+2} + 1^{+4}], -1}\) acts with sign \(1\) on \dEnperp for \(n \in \sset{6, 7, 8}\).  The transformation \(\norm_{\Cn[2^{+2} + 1^{+4}], -1}\) of \dD acts on \dDC as \(\norm_{\Cn[2^{+2} + 1^{+2}], -1}\) and on \dd as \(\norm_{\Cn[2^{+2} + 1], -1}\); so, by \eqref{eq:BCD:norm}, it acts in each case with sign
\[
(-1)^{2(2 - 1)(-1 - 1)/4}\Legend{2\cdot2}{-1}^2 = -1,
\]
hence with the same sign on \dEn for \(n \in \sset{6, 7, 8}\).

\subsection{\texorpdfstring{$\mathsf A_7$ and $\mathsf A_7 \times \mathsf A_1$}{A7 and A7 x A1}}
\label{sec:A7xA1-in-E8}

Let \mc D be the admissible diagram
\[\xymatrix{
& \root_6 \ar@{-}[dr] \\
\root_7 \ar@{-}[ur]\ar@{-}[dr] & & \root_5 \ar@{-}[r] & \root_4 \ar@{-}[r] & \root_3 & \root_0 \\
& \root_6 + \root_7 + \root_8 \ar@{-}[ur]
}\]
in \(\Dn[6] \times \Cn[1]\) of type \(\Dn[6](a_1) \times \Cn[1]\).  By Remark \ref{rem:Dn-as}, the element \(w_8 \ldef w_{\Dn[4 + 2 + 1^{+2}]} = \ncycle(0 1)\ncycle(2 3 4 5)\ncycle(6)\ncycle(7) \in W(\Dn[8])\) is attached to the admissible diagram \(\mc D \sqcup \sset{\root_1}\) of type \(\Dn[6](a_1) \times \unwrap\Cn[1]^2\).

Since \(w_8\) negates \(\root_1\), it stabilises \(\Dn[6] \times \Cn[1]\) and \En[7].  Its restriction \(w_7 = w_{\Dn[4 + 2]}w_{\Cn[1]}\in W(\Dn[6] \times \Cn[1])\) to \En[7] is attached to the admissible diagram \mc D.  On the other hand, since
\[
\refl_{\root_0}\refl_{\root_3}\refl_{\root_5}\refl_{\root_7} = \refl_{\otherroot_1}\refl_{\otherroot_2}\refl_{\otherroot_3}\refl_{\otherroot_4},
\]
where
\[
\begin{pmatrix}
\otherroot_1 \\ 
\otherroot_2 \\ 
\otherroot_3 \\ 
\otherroot_4    
\end{pmatrix} = \frac1 2\begin{pmatrix}
1 & 1  & -1 & -1 \\
1 & 1  & 1  & 1  \\
1 & -1 & -1 & 1  \\
1 & -1 & 1  & -1
\end{pmatrix}\begin{pmatrix}
\root_0 \\
\root_3 \\
\root_5 \\
\root_7
\end{pmatrix},
\]
we have that \(w_8\), viewed as an element of \(W(\En[8])\) (respectively, \(w_7\), viewed as an element of \(W(\En[7])\)), is also attached to the admissible diagram \(\mc D' \sqcup \sset{\root_1}\) of type \(\An[7] \times \An[1]\) (respectively, to the admissible diagram \(\mc D'\) of type \An[7]), where \(\mc D'\) is the admissible diagram
\[\xymatrix{
& \root_6 \ar@(r,ul)@{-}[drrr]\ar@(r,ul)@{-}[drrrr] \\
\otherroot_4 \ar@{-}[dr] & & \otherroot_3 \ar@{-}[r] & \root_4 \ar@{-}[r] & \otherroot_2 & \otherroot_1 \\
& \root_6 + \root_7 + \root_8 \ar@{-}[ur]
}\]
in \En[7].

By Remark \ref{rem:char-values}, the element \(\dot w_{\Dn[4 + 2 + 1^{+2}]}\) acts on \(\Lie(G_{\En[8]})/\spin(V_{\Dn[8]})\) with eigenvalues precisely the \(8\)th roots of unity, each occurring with multiplicity \(16\).  In particular, by dimension counting, \(\card{\sgen{w_8}\bslash((\En[8])_{w_8} \setminus (\Dn[8])_{w_8})} = 16\).  The element \(w_8\) has \(16\) orbits on \Enperp[8], each of size \(8\), and none symmetric.  Thus, since \((\Dn[8])_{w_8} = \Dn[8]\) by Remark \ref{rem:Rw-full}, we have \((\En[8])_{w_8} = \En[8]\), hence also \((\En[7])_{w_7} = \En[7]\).  Of the \(16\) orbits of \(w_8\) on \Enperp[8], \(8\) lie in \Enperp[7] (hence are orbits of \(w_7\) there).  Put \(\dD = \sgen{w_8}\bslash\Dn[8]\), \(\dDC = \sgen{w_7}\bslash(\Dn[6] \times \Cn[1])\), and \(\dEnperp = \sgen{w_n}\bslash\Enperp\) for \(n \in \sset{7, 8}\).

The group \(C_{W(\Dn[8])}(w_8)\) is the kernel of \(\sgn\textsub{\Bn[8], short}\) on
\[
\sgen{\ncycle(0 1)} \times
\sgen{\ncycle(2 3 4 5)} \times
\bigl((\sgen{\ncycle(6)} \times \sgen{\ncycle(7)}) \rtimes \sgen{\cycle(6 7)}\bigr).
\]
It has order \(2^7\), so, by \carter*{Table 11}, equals \(C_{W(\En[8])}(w_8)\).  The elements \(\ncycle(0 1)\ncycle(2 3 4 5)\), \(\ncycle(0 1)\ncycle(6)\), and \cycle(6 7) have \(8\) orbits of size \(2\), \(4\) orbits of size \(4\), and \(12\) orbits (of which \(4\) have size \(2\) and \(8\) have size \(1\)), respectively, on \dEnperp[8], none of them symmetric.  By \eqref{eq:BCD:Coxeter} and \eqref{eq:BCD:switch}, the elements \(\ncycle(0 1)\ncycle(2 3 4 5)\), \(\ncycle(0 1)\ncycle(6)\), and \cycle(6 7) all act with sign \(1\) on \dD.  Thus, each of a set of normal generators of \(C_{W(\En[8])}(w_8)\) acts with sign \(1\) on \dEn[8]; so every element of \(C_{W(\En[8])}(w_8)\) acts with sign \(1\) on \dEn[8].

The group \(C_{W(\Dn[6] \times \Cn[1])}(w_7)\) is the intersection with \(W(\Dn[6] \times \Cn[1])\) of \(C_{W(\Dn[8])}(w_8)\); explicitly, it is the direct product with \sgen{\cycle(6 7)} of the kernel of \(\sgn\textsub{\Bn[6], short}\) on
\[
\sgen{\ncycle(0 1)} \times \sgen{\ncycle(2 3 4 5)}.
\]
It has order \(2^5\), hence, by \carter*{Table 10}, equals \(C_{W(\En[7])}(w_7)\).  We showed above that \(\ncycle(0 1)\ncycle(2 3 4 5)\) and \(\cycle(6 7)\) act without symmetric orbits, hence with sign \(1\), on \dEnperp[7]; and \(\ncycle(0 1)^2\) acts trivially, hence with sign \(1\), on \dEnperp[7] (indeed, on \dEnperp[8]).  Since \(C_{W(\Dn[6] \times \Cn[1])}(w_7)\) acts trivially on the (\(2\)-element) orbit of \(w_7\) in \Cn[1], we have by \eqref{eq:BCD:Coxeter} that \(\ncycle(0 1)^2\) and \(\ncycle(0 1)\ncycle(2 3 4 5)\) act with sign \(1\) on \dDC.  The element \cycle(6 7) acts trivially on \Dn[6], hence also acts with sign \(1\) on \dDC.  Thus, each of a set of normal generators of \(C_{W(\En[7])}(w_7)\) acts with sign \(1\) on \dEn[7]; so every element of \(C_{W(\En[7])}(w_7)\) acts with sign \(1\) on \dEn[7].

For \(n \in \sset{7, 8}\), the group \(N_{W(\En)}(\sgen{w_n})\) is the semi-direct product of \(C_{W(\En)}(w_n)\) and \(\smashed\sgen{\norm_{\Dn[4 + 2 + 1^{+2}], 3}} \times \smashed\sgen{\norm_{\Dn[4 + 2 + 1^{+2}], -3}} \subseteq W(\Dn[6] \times \Cn[1])\).  The elements \(\norm_{\Dn[4 + 2 + 1^{+2}], 3} = \ncycle(1)\cycle(3 5)\ncycle(4)\) and \(\norm_{\Dn[4 + 2 + 1^{+2}], -3} = \ncycle(3)\ncycle(5)\) have \(8\) orbits on \dEnperp[8], each of size \(2\) and none symmetric, hence acts with sign \(1\) on \dEnperp for \(n \in \sset{7, 8}\).  The transformation \(\norm_{\Dn[4 + 2 + 1^{+2}], q}\) of \dD acts on \dDC as \(\norm_{\Dn[4 + 2], q}\); so, by \eqref{eq:BCD:norm}, it acts in each case with sign
\[
(-1)^{2(2 - 1)(q - 1)/4}\Legend{2\cdot4}q\Legend{2\cdot2}q = (-1)^{(q - 1)/2}\Legend8 q = \begin{cases}
1,  & q = 3,  \\
-1, & q = -3,
\end{cases}
\]
hence with the same sign on \dEn for \(n \in \sset{7, 8}\).

\subsection{\texorpdfstring{$\mathsf D_4(a_1)^2$}{D4(a1)2}}
\label{sec:D4(a1)xD4(a1)-in-E8}

By Remark \ref{rem:Dn-as}, the element \(w \ldef w_{\Dn[2^{+4}]} = \ncycle(0 1)\ncycle(2 3)\ncycle(4 5)\ncycle(6 7)\) is attached to the admissible diagram
\[\xymatrix{
& \root_2 \ar@{-}[dr] & & & \root_6 \ar@{-}[dr] \\
\root_1 \ar@{-}[ur]\ar@{-}[dr] & & \root_3 & \root_5 \ar@{-}[ur]\ar@{-}[dr] & & \root_7 \\
& \basis_5 + \basis_6 \ar@{-}[ur] & 
& & \basis_1 + \basis_2 \ar@{-}[ur] 
}\]
of type \(\Dn[4](a_1)^2\).  By Remark \ref{rem:char-values}, the element \(\dot w_{\Dn[2^{+4}]}\) acts on \(\Lie(G_{\En[8]})/\Lie(G_{\Dn[8]})\) with eigenvalues the \(4\)th roots of unity, each with multiplicity \(32\).  In particular, by dimension counting, \(\card{\sgen w\bslash((\En[8])_w \setminus (\Dn[8])_w)} = 32\).  The element \(w\) has \(32\) orbits on \Enperp[8], each symmetric of size \(4\).  Thus, since \((\Dn[8])_w = \Dn[8]\) by Remark \ref{rem:Rw-full}, we have \((\En[8])_w = \En[8]\).  Put \(\dEnperp[8] = \sgen w\bslash\Enperp[8]\).

The group \(C_{W(\Dn[8])}(w)\) is the kernel of \(\sgn\textsub{\Bn[8], short}\) on
\[
(\sgen{\ncycle(0 1)} \times \sgen{\ncycle(2 3)} \times \sgen{\ncycle(4 5)} \times \sgen{\ncycle(6 7)}) \rtimes \sgen{\cycle(0 2)\cycle(1 3), \cycle(2 4)\cycle(3 5), \cycle(4 6)\cycle(5 7)}.
\]
Since \(\sgen{\cycle(0 2)\cycle(1 3), \cycle(2 4)\cycle(3 5), \cycle(4 6)\cycle(5 7)} \cong \Sgp_3\), it has order \(2^{10}\dotm3\).  By \carter*{Table 11}, contains a Sylow \(2\)-subgroup of \(C_{W(\En[8])}(w)\).
Each square of \ncycle(0 1), \ncycle(2 3), \ncycle(4 5), and \ncycle(6 7) acts on \dEnperp[8] with \(16\) orbits, each of size \(2\); and each product of two different cycles acts on \dEnperp[8] with \(8\) orbits, each of size \(4\).  The element \(\cycle(0 3)\cycle(1 2)\) (and its \(C_{W(\Dn[8])}(w)\)-conjugates \(\cycle(2 5)\cycle(3 4)\) and \(\cycle(4 6)\cycle(5 7)\)) acts on \dEnperp[8] with \(12\) orbits of size \(2\) and \(8\) orbits of size \(1\) (namely, those through \(\frac1 2(\pm\root_1 + \root_3 + 2\root_4 + 3\root_5 + 4\root_6 + 2\root_7 + 3\root_8)\), \(\frac1 2(\pm\root_1 + \root_3 + 2\root_4 + \root_5 + 2\root_6 + \root_8)\), \(\frac1 2(\root_1 \pm (\root_3 + 2\root_4 + \root_5) - \root_8)\), and \(\frac1 2(\pm\root_1 + \root_3 + \root_4 + 3\root_5 + 2\root_6 + 2\root_7 + \root_8)\)).  By \eqref{eq:BCD:Coxeter} and \eqref{eq:BCD:switch}, \(\ncycle(0 1)^2\), \(\ncycle(0 1)\ncycle(3 2)\), and \(\cycle(0 3)\cycle(1 2)\) act on \dD with sign \(1\).  Thus, each of a set of normal generators of \(C_{W(\Dn[8])}(w)\), which contains a Sylow \(2\)-subgroup of \(C_{W(\En[8])}(w)\), acts with sign \(1\) on \dEn[8]; so every element of \(C_{W(\En[8])}(w)\) acts with sign \(1\) on \dEn[8].

The group \(N_{W(\En[8])}(\sgen w)\) is the semi-direct product of \(C_{W(\En[8])}(w)\) and the group generated by \(\norm_{\Dn[2^{+4}], -1} = \ncycle(1)\ncycle(2)\ncycle(5)\ncycle(7) \in W(\Dn[8])\).  The element \(\norm_{\Dn[2^{+4}], -1}\) acts on \dEnperp[8] with \(12\) orbits of size \(2\) and \(8\) orbits of size \(1\) (namely, those through \(\frac1 2(\pm\root_1 + \root_3 + 2\root_4 + 3\root_5 + 4\root_6 + 2\root_7 + 3\root_8)\), \(\frac1 2(\pm\root_1 + \root_3 + 2\root_4 + 3\root_5 + 4\root_6 + 2\root_7 + \root_8)\), and \(\frac1 2(\pm\root_1 + \root_3 + 2\root_4 + \root_5 \pm \root_8)\)).  By \eqref{eq:BCD:norm}, it acts on \dD with sign
\[
(-1)^{4(4 - 1)(-1 - 1)/4}\Legend{2\dotm2}{-1}^4 = 1.
\]

\subsection{\texorpdfstring{$\mathsf D_8(a_3)$}{D8(a3)}}
\label{sec:D8(a3)-in-E8}

By Remark \ref{rem:Dn-as}, the element \(w \ldef w_{\Dn[4^{+2}]} = \ncycle(0 1 2 3)\ncycle(4 5 6 7)\) is attached to the admissible diagram
\[\xymatrix{
& & & \root_4 \ar@{-}[dr] \\
\root_0 \ar@{-}[r] & \root_2 \ar@{-}[r] & \root_3 \ar@{-}[ur]\ar@{-}[dr] & & \root_5 \ar@{-}[r] & \root_6 \ar@{-}[r] & \root_7 \\
& & & \basis_3 + \basis_4 \ar@{-}[ur] 
}\]
of type \(D_8(a_3)\).  By Remark \ref{rem:char-values}, the element \(\dot w_{\Dn[4^{+2}]}\) acts on \(\Lie(G_{\En[8]})/\spin(V_{\Dn[8]})\) with eigenvalues precisely the \(8\)th roots of unity, each with multiplicity \(16\).  In particular, by dimension counting, \(\card{\sgen w\bslash((\En[8])_w \setminus (\Dn[8])_w)} = 16\).  The element \(w\) has \(16\) orbits on \Enperp[8], each symmetric of size \(8\).  Thus, since \((\Dn[8])_w = \Dn[8]\) by Remark \ref{rem:Rw-full}, we have \((\En[8])_w = \En[8]\).  Put \(\dEnperp[8] = \sgen w\bslash\Enperp[8]\).

The group \(C_{W(\Dn[8])}(w)\) is the kernel of \(\sgn\textsub{\Bn[8], short}\) on
\[
(\sgen{\ncycle(0 1 2 3)} \times \sgen{\ncycle(4 5 6 7)}) \rtimes \sgen{\cycle(0 4)\cycle(1 5)\cycle(2 6)\cycle(3 7)}.
\]
It has order \(2^6\).  By \carter*{Table 11}, it is a Sylow \(2\)-subgroup of \(C_{W(\En[8])}(w)\).
The element \(\ncycle(0 1 2 3)^2\) (and its conjugate \(\ncycle(4 5 6 7)^2\)) acts on \dEnperp[8] with \(4\) orbits, each of size \(4\).  The element \(\cycle(0 4)\cycle(1 5)\cycle(2 6)\cycle(3 7)\) acts on \dEnperp with \(6\) orbits of size \(2\), and \(4\) orbits of size \(1\)
(namely, those through \(\frac1 2(\root_1 \pm (\root_3 + 2\root_4 + \root_5) + \root_8)\) and \(\frac1 2(\root_1 + 2\root_2 + \root_3 \pm (\root_5 + \root_8))\)).  By \eqref{eq:BCD:Coxeter} and \eqref{eq:BCD:switch}, they act on \dD with sign \(1\).  Of course, \(w = \ncycle(0 1 2 3)\ncycle(4 5 6 7)\) acts on \dEn[8] with sign \(1\).  Thus, each of a set of normal generators of \(C_{W(\Dn[8])}(w)\), which is a Sylow \(2\)-subgroup of \(C_{W(\En[8])}(w)\), acts with sign \(1\) on \dEn[8]; so every element of \(C_{W(\En[8])}(w)\) acts with sign \(1\) on \dEn[8].

The group \(N_{W(\En[8])}(\sgen w)\) is the semi-direct product of \(C_{W(\En[8])}(w)\) and \(\smashed\sgen{\norm_{\Dn[4^{+2}], 3}} \times \smashed\sgen{\norm_{\Dn[4^{+2}], -3}}\).  The elements \(\norm_{\Dn[4^{+2}], 3} = \cycle(1 3)\ncycle(2)\cycle(5 7)\ncycle(6)\) and \(\norm_{\Dn[4^{+2}], -3} = \ncycle(1)\ncycle(3)\ncycle(5)\ncycle(7)\) act on \dEnperp[8] with \(8\) orbits, each of size \(2\).  By \eqref{eq:BCD:norm}, the element \(\norm_{\Dn[4^{+2}], q}\) acts on \dD with sign
\[
(-1)^{2(2 - 1)(q - 1)/4}\Legend{2\dotm4}q^2 = (-1)^{(q - 1)/2}.
\]

\begin{bibdiv}
\begin{biblist}
\bibselect{references}
\end{biblist}
\end{bibdiv}
\end{document}